\documentclass{NP-IV}
\usepackage[latin1]{inputenc}
\usepackage{amsmath}
\usepackage{amscd}
\usepackage{mathrsfs}
\usepackage[all]{xy}
\usepackage{graphicx}
\usepackage{pstricks} 
\usepackage{amsfonts}   
\usepackage{pict2e}   
\usepackage{makeidx}
\usepackage[colorlinks=true,linkcolor=red,citecolor=red]{hyperref}

\usepackage{amssymb}
\usepackage{latexsym}
\usepackage{bold-extra}

\renewcommand{\a}{\mathcal{A}}

\newcommand{\bs}[1]{\boldsymbol{#1}}

\newcommand{\CW}{\bs{\mathrm{CW}}}

\newcommand{\dem}{\begin{proof}}

\renewcommand{\d}{\partial_T}
\newcommand{\dq}{\Delta_q}

\newcommand{\Ed}{\mathcal{E}^{\dag}}

\newcommand{\Fb}{\overline{\mathrm{F}}}

\newcommand{\Hom}{\mathrm{Hom}}

\newcommand{\J}{\mathbb{J}}
\newcommand{\M}{\mathrm{M}}

\newcommand{\lr}[1]{\langle#1\rangle}
\newcommand{\lb}{\bs{\lambda}}

\renewcommand{\O}{\mathscr{O}}

\newcommand{\ph}[1]{\langle#1\rangle}

\newcommand{\simto}{\stackrel{\sim}{\to}}

\newcommand{\W}{\bs{\mathrm{W}}}

\usepackage{amsthm}
\swapnumbers

\makeatletter
\def\swappedhead#1#2#3{%
  \thmname{#1}\;%
  \thmnumber{\@upn{\the\thm@headfont#2\@ifnotempty{#1}}}%
  \thmnote{\,{\the\thm@notefont(#3)}}{.~}}
\makeatother

\newtheoremstyle{dotless-thm}
  {10pt}
  {10pt}
  {\itshape}
  {}
  {\bfseries}
  {}
  {.0em}
  {}
\theoremstyle{dotless-thm}
\newtheorem{theorem}{\textbf{Theorem}}[subsection]
\newtheorem{def-intro}{\textbf{\textsc{Definition}}}
\newtheorem{thm-intro}{\textbf{\textsc{Theorem}}}
\newtheorem{rk-intro}[thm-intro]{\textbf{\textsc{Remark}}}
\newtheorem{cor-intro}[thm-intro]{\textbf{\textsc{Corollary}}}
\newtheorem{proposition}[theorem]{\textbf{Proposition}}
\newtheorem{lemma}[theorem]{\textbf{Lemma}}
\newtheorem{corollary}[theorem]{\textbf{Corollary}}
\newtheorem{definition}[theorem]{\textbf{Definition}}
\newtheorem{remark}[theorem]{\textbf{Remark}}

\newtheorem{hypothesis}[theorem]{\textbf{Hypothesis}}

\newtheorem{notation}[theorem]{\textbf{Notation}}
\numberwithin{equation}{section}

\title[Solvability of rank one $p$-adic differential and 
$q$-difference equations over the Amice ring]{Solvability 
of rank one $p$-adic differential and $q$-difference 
equations over the Amice ring}

\author{Andrea Pulita}
\email{pulita@math.univ-montp2.fr}
\address{Département de Mathématiques,
Université de Montpellier II, CC051,
Place Eugène Bataillon,
F-34 095, Montpellier CEDEX 5.}

\date{\today}

\subjclass{Primary 12h25; Secondary 14G22}

\keywords{$p$-adic differential equations, 
$q$-difference equations, solvability}

\begin{abstract}
We provide a necessary and sufficient condition for the solvability of a rank one differential (resp. $q$-difference) equation over the Amice's ring. We also extend to that ring a Birkoff decomposition result, originally due to Motzkin.
\end{abstract}

\begin{document}
\maketitle

\begin{center}
Version of \today
\end{center}

\makeatletter
\renewcommand\tableofcontents{%
    \subsection*{\contentsname}%
    \@starttoc{toc}%
    }
\makeatother

\begin{small}
\setcounter{tocdepth}{3} \tableofcontents
\end{small}

\setcounter{section}{0}


\section*{\textsc{Introduction}}
\addcontentsline{toc}{section}{\textsc{Introduction}}
Let $(K,|.|)$ be a  
field of characteristic $0$ 
which is complete with respect to an 
ultramentric absolute value $|.|$, and whose 
residual field $k$ has positive characteristic $p>0$. 
Denote by $\O_K:=\{x\in K\;|\;|x|\leq 1\}$ its ring of 
integers.

The Robba ring $\mathfrak{R}_K$ is the ring of power 
series $f(T)=\sum_{i\in\mathbb{Z}}a_iT^i$, $a_i\in K$, 
for which there exists an unspecified $\varepsilon<1$ 
(depending on $f$) 
such that $f(T)$ converges on the annulus 
$\{\varepsilon<|T|<1\}$. 
In a previous work \cite{Rk1} (see also \cite{Rk1-NP}) 
we described the isomorphism classes of rank one 
solvable differential equations over $\mathfrak{R}_K$. 
In particular we have obtained a criterion permitting to 
read in the coefficients of the differential equation the 
solvability.

\if{
under the assumption that $K$ contains the $p^m$-roots 
of unity $\bs{\mu}_{p^m}$, for all $m$ large enough. 
Namely, let $\CW$ denote the additive group of Witt 
co-vectors, and let $\Fb$ be its Frobenius, then the 
group $\mathrm{Pic}^{\mathrm{sol}}(
\mathfrak{R}_K)$, under tensor product, of the 
isomorphism classes of rank one solvable differential 
equations is given by 
\begin{equation}\label{eq : Pic sol}
\mathrm{Pic}^{\mathrm{sol}}(
\mathfrak{R}_K)\;:=\;
\frac{\CW(T^{-1}k[T^{-1}])}{
(\Fb-1)\CW(T^{-1}k[T^{-1}])}
\oplus\frac{\mathbb{Z}_p}{\mathbb{Z}}\;.
\end{equation}
}\fi
In another work \cite{Pu-q-Diff} 
(see also \cite{Diff-Gamma}) we studied 
the phenomena of deformation of $q$-difference 
equations and we have proved that, under the solvability 
condition, the category of differential equation is 
equivalent to that of $q$-difference equations (this 
generalizes previous works of Yves André 
and Lucia Di Vizio 
\cite{An-DV}, \cite{DV-Dwork}).

In this paper we are interested to differential and 
$q$-difference equations over the Amice's ring 
$\mathcal{E}_K$. This ring is 
formed by formal power series 
$f(T)=\sum_{i\in\mathbb{Z}}a_iT^i$, $a_i\in K$, that 
are bounded (i.e. $\sup_i|a_i|<+\infty$), and such that 
$\lim_{i\to-\infty}|a_i|=0$. It is the ring used by 
J.M.Fontaine in the theory of $(\phi,\Gamma)$-modules 
\cite{Fo}. 

A classification  of rank one 
differential (or $q$-difference) equations over the ring 
$\mathcal{E}_K$ is not known, and it seems reasonable 
to think that such a classification 
will be quite different in nature with respect to that 
obtained in \cite{Rk1} for differential 
equations over the Robba ring $\mathfrak{R}_K$. 
This will not be the goal of 
this paper. We here obtain a criterion of solvability 
for differential and $q$-difference equations 
similar to that in \cite{Rk1}.  

We actually describe completely the precise nature of the 
solutions of differential and difference equations as 
exponentials of Artin-Hasse type.

As a corollary we obtain that every differential equation 
over $\mathcal{E}_K$ has a basis in which the 
associated operator has coefficients in 
$\O_K[[T^{-1}]]$. 
This constitutes an analogous of the Katz canonical 
extension theorem \cite{Katz-Can} (see also \cite{Matsuda-Unipotent}).\\

The results of this paper have been obtained in 2005, 
during our PhD at the university of Paris, under the 
supervision of Gilles  Christol. \\

\textbf{Acknowledgments}
Step 4 in the proof of Proposition 
\ref{division of the problem over Amice} is due to 
Gilles Christol, 
we want here to express our gratitude to him for helpful 
discussions.

\section*{\textsc{First part : 
solvability of rank one differential equations over 
$\mathcal{E}_K$}}
\addcontentsline{toc}{section}{\textsc{First part : 
solvability of rank one differential equations over 
$\mathcal{E}_K$}}

\section{Notations}
Let $\mathbb{R}_{\geq 0}$ be the interval of real 
numbers that are greater than or equal to $0$. 

Let $K$ be a complete valued field of characteristic $0$, 
with ring of integers $\O_K:=\{x\in K,|x|\leq 1\}$, and 
maximal ideal $\mathfrak{p}_K:=\{x\in K,|x|< 1\}$. We 
assume that the residual field 
$k:=\O_K/\mathfrak{p}_K$ has positive 
characteristic $p>0$.

If $I\subseteq\mathbb{R}_{\geq 0}$ is any interval, we 
denote by $\a_{K}(I)$ the ring of analytic functions on 
the space $\{|T|\in I\}$. If $0\in I$ this is an open or 
closed disk, in this case we have
\begin{equation}
\a_K(I)\;:=\;\{\sum_{i\geq 0}a_iT^i\;,\;a_i\in K,
\lim_{i\to\infty}|a_i|\rho^i=0,\textrm{ for all }
\rho\in I\}\;.
\end{equation}
If $0\notin I$ it is an open, closed, or semi-open 
annulus and we have
\begin{equation}
\a_K(I)\;:=\;\{\sum_{i\in\mathbb{Z}}a_iT^i\;,\;
a_i\in K,
\lim_{i\to\pm\infty}|a_i|\rho^i=0,
\textrm{ for all }\rho\in I\}\;.
\end{equation}
For all $\rho\in I$ we have a norm on $\a_K(I)$ given by 
$|\sum_{i\in\mathbb{Z}} a_i T^i|_\rho:=\sup_i|a_i|\rho^i$. 
And $\a_K(I)$ is complete with respect to the 
Frechet 
topology defined by the family of norms 
$\{|.|_\rho\}_{\rho\in I}$. We define the 
\emph{Robba ring} as 
\begin{equation}
\mathfrak{R}_K\;:=\;
\cup_{\varepsilon>0}\a_K(]1-\varepsilon,1[)\;.
\end{equation}
The topology of the ring $\mathfrak{R}_K$ is the limit of 
the topologies of $\a_K(]1-\varepsilon,1[)$ which are 
Frechet spaces. It is hence a $\mathcal{LF}$ topology.

The \emph{Amice's ring} $\mathcal{E}_K$ is defined as
\begin{equation}
\mathcal{E}_K\;:=\;\{\sum_{i\in\mathbb{Z}}a_iT^i,\; 
a_i\in K,\; \sup_i|a_i|<+\infty,\; \lim_{i\to-\infty}|a_i|
=0\}\;.
\end{equation}
It is a complete valued ring with respect to 
the Gauss norm $|\sum a_iT^i|_1:=\sup|a_i|$. 
Its ring of integers $\O_{\mathcal{E}_K}=
\{f\in\mathcal{E}_K\;|\;|f|_1\leq 1\}$ is a local 
ring, with residual field $k((t))$ 
(i.e. a field of Laurent power series with coefficients in 
$k$). 
If $K$ is discretely valued, $\mathcal{E}_K$ is 
moreover a field.

We define the \emph{bounded Robba ring} as 
$\Ed_K:=\mathfrak{R}_K\cap\mathcal{E}_K$. If $K$ is 
discretely valued, it is a field. $\mathfrak{R}_K$ and 
$\mathcal{E}_K$ induce two distinct topologies on 
$\Ed_K$, and this last  is dense in $\mathfrak{R}_K$ 
and in $\mathcal{E}_K$ with respect to the 
corresponding topologies.

\subsection{Differential modules and radius of 
convergence}

Let $A$ be one of the rings $\a_K(I)$ or 
$\mathcal{E}_K$. The $A$-module of continuous 
differentials $\Omega^1_{A/K}$ is 
free and one dimensional over $A$. 
Let $d:A\to A$ be a non trivial derivation corresponding 
to a generator of $\Omega^1_{A/K}$. 
A differential module over $A$ is a finite free 
$A$-module $M$, together with a linear map
$\nabla:M\to M$, called \emph{connection}, satisfying 
the Leibniz rule $\nabla(fm)=d(f)m+f\nabla(m)$, 
$f\in A$, $m\in M$.

In this paper we will always assume the rank of $M$ to 
be $1$. We denote by $\d:= T\frac{d}{dT}$. 
If a basis of $M$ is given, 
then $\nabla$ becomes an operator of the form 
$f\mapsto \d(f)-g\cdot f:A\to A$, where $g\in A$. 
We say then that $M$ is defined by the operator $\d-g$. 
With respect to another basis $M$ will be represented by 
another operator $\d-g_2$, and $g_2$ is related to $g$ 
by the rule $g_2=g+\frac{\d(h)}{h}$, where 
$h\in A^{\times}$ is the 
base change matrix.

We denote by $M_1\otimes M_2$ the tensor product of 
two differential modules 
$(M_1,\nabla_1)$ and $(M_2,\nabla_2)$. This is a 
differential module whose underling $A$-module is 
$M_1\otimes_A M_2$, and whose connection 
is $\nabla_1\otimes\mathrm{Id}+
\mathrm{Id}\otimes\nabla_2$. 
If $\d-g_1$ and $\d-g_2$ are associated operators with 
respect to some bases, then $\d-(g_1+g_2)$ will be the 
operator of $M_1\otimes M_2$ with respect to the tensor 
product of the bases.

Let now $\d-g(T)$ be a differential operator with 
$g\in\a_K(I)$, 
and let $\Omega/K$ be any complete valued 
field extension of $K$. For all $x\in\Omega$, $|x|\in I$, 
we look at $\Omega[[T-x]]$ as an $\a_K(I)-$differential 
algebra by the Taylor map 
\begin{equation}\label{eq : Taylor solution}
f(T)\mapsto\sum_{k\geq
0}(\frac{d}{dT})^k(f)(x)\frac{(T-x)^k}{k!}\;:\;
\a_K(I)\longrightarrow\Omega[[T-x]]\;.
\end{equation}
Define inductively $g_{[k]}(T)$ as $g_{[0]}:=1$, 
$g_{[1]}:=g(T)/T$, and for all $k\geq 1$ we set 
$g_{[k+1]}:=\frac{d}{dT}(g_{[n]})+g_{[k]}g_{[1]}$.
The Taylor solution of $\d-g(T)$ at $x$ is then
\begin{equation}\label{s_x(T)}
s_x(T):=\sum_{k\geq 0} g_{[k]}(x)\frac{(T-x)^k}{k!}\;.
\end{equation}
\index{Taylor solution} Indeed $\d (s_x(T))=g(T)s_x(T)$. The
radius of convergence of $s_x(T)$ at $x$ is, by the usual
definition,
\begin{equation}\label{eq : liminf}
\index{Ray(M,x)@$Ray(M,x)$}
\liminf_k(|g_{[k]}(x)|/|k!|)^{-\frac{1}{k}}\;.
\end{equation}
\begin{definition}
We set
\begin{equation}
\omega\;:=\;|p|^{\frac{1}{p-1}}\;<\;1\;.
\end{equation}
\end{definition}
\begin{definition}\label{eq:radius}
The radius of convergence of $M$ at $\rho\in I$ is
\begin{eqnarray}\label{eq : radius of conv}
\index{Ray(M,rho)@$Ray(M,\rho)$}
Ray(M,\rho)&:=&\min\Bigl(\rho\;,\;\liminf_k(|g_{[k]}|_\rho/|k!|)^{-1/k}\Bigr)\nonumber\\
&=&
\min\Bigl(\rho\;,\;\omega\bigl[\limsup_k(|g_{[k]}|_\rho)^{1/k}\bigr]^{-1}\Bigr)\;.
\end{eqnarray}
We say that $M$ is solvable at $\rho$ if 
$Ray(M,\rho)=\rho$.
\end{definition}
This number represents the minimum radius of 
convergence of a solution at an unspecified point 
$x$ of norm $|x|=\rho$. More precisely there exists a 
complete field extension $\Omega/K$ 
and a point $t_\rho\in\Omega$, with 
$|t_\rho|=\rho$, such that for all $g\in\a_K(I)$ one has 
$|g|_\rho=|g(t_\rho)|_\Omega$. Such a point is called a 
$\rho$-\emph{generic point} (cf. \cite{Ch-Ro}). We deduce that
\begin{equation}
Ray(M,\rho)\;=\;
\min(\;\rho\;,\;\min_{|x|=\rho,\;x\in\Omega}\{\textrm{Radius of }s_x(T)\}\;)\;.
\end{equation}
Indeed this follows from \eqref{eq : liminf} 
and from the fact that 
$|g_{[k]}|_\rho=
\max_{|x|=\rho,\;x\in\Omega}|g_{[k]}(x)|_\Omega=
|g_{[k]}(t_\rho)|$.

\begin{remark}
The second equality of \eqref{eq : radius of conv} 
follows from the fact that the 
sequence $|k!|^{1/k}$ is convergent to $\omega$, and 
$|g_{[k]}|_\rho^{1/k}$
is bounded by $\max(|g_{[1]}|_\rho,\rho^{-1})$. The presence of $\rho$
in the minimum makes this definition invariant under change of
basis in $M$.
\end{remark}
If now $\d-g(T)$ is a differential operator with 
$g(T)\in\mathcal{E}_K$, then \eqref{eq : radius of conv} 
has a meaning for $\rho=1$ and it is an invariant by base 
changes of $M$. 

\begin{remark}\label{remark : properties}
We shall recall the following facts, that will be 
systematically used in the sequel:
\begin{enumerate}
\item If $M$ is a differential module over 
$\mathcal{E}_K$, then Definition \ref{eq:radius} 
has a meaning for $\rho=1$;
\item If $M$ is a differential module over $\a_K(I)$, and 
if $I$ is not reduced to a point, then the function 
$\rho\mapsto Ray(M,\rho)$ has the following properties
\begin{enumerate}
\item It is continuous on $I$.
\item It is piecewise of the form $\alpha\rho^\beta>0$, 
which is usually quoted as the \emph{$\log$-affinity 
property} (this means that the function $r\mapsto 
\log(Ray(M,\exp(r)))=\log(\alpha)+\beta r$ is affine).
\item The slopes $\beta$ are natural numbers.
\end{enumerate}
\item Recall that for all differential module $M,N$ one has
\begin{equation}\label{eq : tensor product radius}
Ray(M\otimes N,\rho)\;\geq\;
\min(Ray(M,\rho),Ray(N,\rho))
\end{equation} 
and equality holds if
$Ray(M,\rho)\neq Ray(N,\rho)$ 
(cf. \cite[Remark 1.2]{Rk1}). Notice that 
if for a given $\rho$ we have 
$Ray(M,\rho)= Ray(N,\rho)$, it
often happens that $Ray(M,\rho')\neq Ray(N,\rho')$ 
holds in a neighborhood of $\rho$ 
with the individual exception of $\rho$, 
so \emph{by continuity} we deduce that \eqref{eq : 
tensor product radius} is an equality also at 
$\rho$.
\item The $p$-th ramification $f(T)\mapsto f(T^p)$ is a 
$K$-linear ring endomorphism of $\mathcal{E}_K$ and 
of $\a_K(I)$ which is called (somehow improperly) 
\emph{Frobenius map}. We denote it by $\varphi$. 
By extension of scalars one can define an exact 
endo-functor which is called pull-back by Frobenius, 
denoted by $\varphi^*$ 
(cf. \cite{Astx}, \cite[1.2.3, 1.2.4]{Rk1}). 
The functor associates to a differential equation 
$\partial-g(T)$ the differential equation 
$\partial_T-p\cdot g(T^p)$. This is a technical tool 
of the theory used mainly to ``\emph{move the radii}'' 
of convergence of a differential module. More precisely if 
$M$ is a differential module over $\a_K(I^p)$, 
then for all $\rho\in I$ one has
\begin{equation*}
Ray(\varphi^*(M),\rho)\geq
\rho\cdot \min\Bigl(\Bigl(\frac{Ray(M,\rho^p)}{\rho^p}\Bigr)^{1/p}\;,\;
|p|^{-1} \frac{Ray(M,\rho^p)}{\rho^p}\Bigr),
\end{equation*}
and equality holds if $Ray(M,\rho^p)\neq\omega^p\rho^p$ 
(cf. \cite[Thm.7.2]{Astx}, \cite[10.3.2]{Kedlaya-book}).

If $Ray(M,\rho^p)>\omega\rho^p$, 
it is known that the functor can 
be (improperly speaking) ``\emph{inverted}'', this 
means that there exists a differential module $N$ such 
that $\varphi^*(N)\cong M$, and that 
such a module is unique (for a more precise statement 
see \cite[Thm. 7.5]{Astx}, 
\cite[10.4.2]{Kedlaya-book}). 
We say that $N$ is an 
\emph{antecedent by Frobenius of $M$}. 
\end{enumerate}

We refer to \cite{Rk1}, for the proof of these sentences 
and for all further properties and definitions.
\end{remark}
\section{Criterion of solvability for differential equations over \protect{$\mathcal{E}_K$}}
\label{section crit of solv} In this section we obtain a criterion
of solvability for differential equations over $\mathcal{E}_K$.
After a technical part (cf. Proposition 
\ref{division of the problem over Amice}), 
the main result will be actually an immediate 
consequence of the Lemma
\ref{criteria of solvability lemma2}.

\begin{lemma}[Small radius]\label{small radius2}
Let $\d-g(T)$, $g(T)\in\mathcal{E}_K$. Then
$Ray(\d-g(T),1)<\omega$ if and only if $|g(T)|_1>1$. 
In this case we have
\begin{equation}
Ray(\d-g(T),1)\;=\;
\omega\cdot|g(T)|_1^{-1}\;.
\end{equation}
\end{lemma}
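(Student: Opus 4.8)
The plan is to work directly with the sequence $g_{[k]}$ defining the Taylor solution and with the formula $Ray(\partial_T-g(T),1)=\min\bigl(1,\,\omega[\limsup_k|g_{[k]}|_1^{1/k}]^{-1}\bigr)$ supplied by \eqref{eq : radius of conv} (meaningful at $\rho=1$ for $g\in\mathcal{E}_K$ by Remark~\ref{remark : properties}(1)). The one elementary fact I would isolate first is that $\frac{d}{dT}$ does not increase the Gauss norm: if $f=\sum_i a_iT^i\in\mathcal{E}_K$ then $\frac{d}{dT}f=\sum_i ia_iT^{i-1}$, so $|\frac{d}{dT}f|_1=\sup_i|ia_i|\le\sup_i|a_i|=|f|_1$ because $|i|\le 1$ for every $i\in\Z$ (more generally $|\frac{d}{dT}f|_\rho\le\rho^{-1}|f|_\rho$, which is exactly why $\rho=1$ is special). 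I would also record at once that $|g_{[1]}|_1=|g(T)/T|_1=|g(T)|_1$, since $|T^{-1}|_1=1$.

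For the direction ``$|g(T)|_1\le1\Rightarrow Ray(\partial_T-g(T),1)\ge\omega$'' (equivalently: $Ray(\partial_T-g(T),1)<\omega$ forces $|g(T)|_1>1$), I would prove by induction on $k$ that $|g_{[k]}|_1\le1$. The base case is $|g_{[1]}|_1=|g(T)|_1\le1$; for the inductive step, from $g_{[k+1]}=\frac{d}{dT}(g_{[k]})+g_{[k]}g_{[1]}$, the ultrametric inequality together with the norm-non-increasing property above give $|g_{[k+1]}|_1\le\max\bigl(|g_{[k]}|_1,\,|g_{[k]}|_1|g_{[1]}|_1\bigr)\le1$. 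Hence $\limsup_k|g_{[k]}|_1^{1/k}\le1$ and $Ray(\partial_T-g(T),1)\ge\min(1,\omega)=\omega$.

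For the remaining direction I would establish the sharp equality $|g_{[k]}|_1=|g(T)|_1^{\,k}$ for all $k\ge1$ whenever $|g(T)|_1>1$, again by induction. It holds for $k=1$; if it holds for $k$, then in $g_{[k+1]}=\frac{d}{dT}(g_{[k]})+g_{[k]}g_{[1]}$ the summand $g_{[k]}g_{[1]}$ has norm $|g(T)|_1^{\,k+1}$ while $|\frac{d}{dT}(g_{[k]})|_1\le|g_{[k]}|_1=|g(T)|_1^{\,k}<|g(T)|_1^{\,k+1}$, so the strictly dominant term survives and $|g_{[k+1]}|_1=|g(T)|_1^{\,k+1}$. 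Therefore $\limsup_k|g_{[k]}|_1^{1/k}=|g(T)|_1>1$, whence $\omega[\limsup_k|g_{[k]}|_1^{1/k}]^{-1}=\omega|g(T)|_1^{-1}<\omega<1$ and $Ray(\partial_T-g(T),1)=\min(1,\,\omega|g(T)|_1^{-1})=\omega|g(T)|_1^{-1}$. This gives simultaneously the converse implication and the asserted value.

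I do not anticipate a genuine obstacle: the whole argument hinges on the single point that the derivative term $\frac{d}{dT}(g_{[k]})$ in the recursion is always dominated by the product term, which is precisely where the Gauss-norm-non-increasing property of $\frac{d}{dT}$ is used. On an annulus of radius $\rho<1$ this domination would fail and the clean closed formula would not survive, so the only care needed is in making that domination explicit at each step of the two inductions.
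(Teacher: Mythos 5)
Your argument is correct and is essentially the standard proof of the small-radius lemma, which the paper does not reproduce but simply outsources to \cite[Lemma 1.1]{Rk1}: one bounds $|g_{[k]}|_1$ by induction on the recursion $g_{[k+1]}=\tfrac{d}{dT}(g_{[k]})+g_{[k]}g_{[1]}$, using that $\tfrac{d}{dT}$ does not increase the Gauss norm. The only ingredient you use silently is the multiplicativity (not just submultiplicativity) of $|\cdot|_1$ on $\mathcal{E}_K$ in the step $|g_{[k]}g_{[1]}|_1=|g|_1^{k+1}$; this is a standard fact (reduce to bounded power series after discarding a tail of small norm, where $|\cdot|_1=\lim_{\rho\to1^-}|\cdot|_\rho$ is a limit of multiplicative norms), so no genuine gap remains.
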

\begin{proof}
See \cite[Lemma 1.1]{Rk1}.
\end{proof}

\subsection{Technical results}
There is no domain of the affine line where all the power 
series in $\mathcal{E}_K$ converge. 
If $\M$ is a differential module associated with 
the operator $\d-g$, with $g\in\mathcal{E}_K$, 
it is useful to have a basis of $\M$ 
in which $g$ converges on some domain.
For this, for all functions 
$g(T)=\sum_{i\in\mathbb{Z}}a_iT^i$ 
we set 
$g^-(T):=\sum_{i\leq -1}a_iT^i$, and 
$g^+(T):=\sum_{i\geq 1}a_iT^i$. 
The following proposition expresses any solvable $\M$ as 
tensor product of some 
solvable differential modules defined over a disk 
centered at $0$ and a disk centered at $\infty$. 
The ``\emph{Step} $4$'' of the proof is due to 
G.Christol.

\begin{proposition}
\label{division of the problem over Amice}
Let $\d-g(T)$, $g(T)\in\mathcal{E}_K$, be an 
equation which is solvable at $\rho=1$.
Then $\d-g^-(T)$, $\d-a_0$, and $\d-g^+(T)$ are all 
solvable at $\rho=1$.
\end{proposition}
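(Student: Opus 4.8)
### Proof Proposal

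The plan is to decompose the connection $\d - g(T)$ into three pieces according to the three components $g^-$, $a_0$, $g^+$ of $g$, and to show that each piece is solvable at $\rho=1$ by controlling the radius of convergence. Write $g = g^- + a_0 + g^+$, so that, as recalled in the discussion of tensor products in the notations section, the module $\M$ associated to $\d - g$ is the tensor product of the modules $\M^-$, $\M^0$, $\M^+$ associated respectively to $\d - g^-$, $\d - a_0$, $\d - g^+$. Since $Ray(\M,1)=1$ by hypothesis, it suffices to prove $Ray(\M^-,1) = Ray(\M^0,1) = Ray(\M^+,1) = 1$, and conversely these are enough by \eqref{eq : tensor product radius}. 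The module $\M^-$ lives naturally on a closed disk at $\infty$ (since $g^- \in \O_K[[T^{-1}]]T^{-1} \subset \a_K(]1,+\infty])$ after the substitution $T \mapsto T^{-1}$) and $\M^+$ on a closed disk at $0$ (since $g^+ \in T K[[T]]$ converges on some $\a_K([0,r])$, but a priori only for small $r$), while $\M^0$ is the rank one module $\d - a_0$ with constant coefficient, which is solvable at $\rho=1$ exactly when $|a_0| \leq 1$.

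The core of the argument is to exploit the log-affinity and continuity properties of the radius function for modules over $\a_K(I)$, combined with the Small radius lemma (Lemma \ref{small radius2}). First I would observe that solvability of $\d-g$ at $1$ forces $|g|_1 \leq 1$ by Lemma \ref{small radius2}; since $|g|_1 = \max(|g^-|_1, |a_0|, |g^+|_1)$, each of the three summands has Gauss norm $\leq 1$. This immediately handles $\M^0$: $|a_0|\leq 1$ gives $Ray(\d-a_0,1)\geq \omega$, and in fact solvability of $\d-a_0$ at $1$ is classical (it is the module of the exponential/logarithmic character; when $|a_0|\leq 1$ one has $Ray(\d-a_0,1)=1$, e.g.\ because $a_0 \in \O_K$ and the associated solution $T^{a_0}$ has the generic radius). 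The more delicate point is $\M^\pm$: knowing $|g^\pm|_1\leq 1$ only gives $Ray(\M^\pm,1)\geq \omega$, not $=1$. Here I would use the tensor product equality: near $\rho=1$ the three radius functions $Ray(\M^-,\rho)$, $Ray(\M^0,\rho)$, $Ray(\M^+,\rho)$ are each log-affine with integer slopes on a one-sided neighborhood of $1$, and since $\M^-$ and $\M^+$ are defined on disks their radius functions are monotone in a controlled way (for a module on a disk at $0$, $Ray(\M^+,\rho)$ tends to be increasing as $\rho$ decreases toward the center, etc.). The generic way the slopes add up, together with $Ray(\M,\rho)\leq \rho$ and continuity, should pin down each individual slope to be $0$ and each individual value at $1$ to be $1$.

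Concretely, the argument I expect to carry out is: extend $\d - g$ to $\a_K(]1-\eps,1[)$ for some $\eps$ (possible since on that annulus every element of $\mathcal{E}_K$ converges, this being $\Ed_K \supseteq$ nothing — rather, one uses that $g^-$ converges for $|T|$ near $1$ from below because $g^- \in \O_K[[T^{-1}]]$, and $g^+$ converges for $|T|$ near $1$ from below because $g^+ \in K[[T]]$ has bounded coefficients); then $\M^-$, $\M^+$ genuinely define differential modules over $\a_K(]1-\eps,1[)$, and by Remark \ref{remark : properties} their radius functions are continuous, log-affine with integer slopes. If $\M^+$ had $Ray(\M^+,1)<1$ then by log-affinity $Ray(\M^+,\rho)<\rho$ on all of $]1-\eps,1[$; but $\M^+$ is really defined on the closed disk of radius (at least) some $r_0$, where it is solvable trivially — no, more carefully: on a small disk $Ray(\M^+,\rho)=\rho$ automatically for $\rho$ small, so by log-affinity and the integer-slope constraint one propagates $Ray(\M^+,\rho)=\rho$ up to $\rho=1$ unless a slope break occurs, and a slope break can only decrease the radius, which combined with $|g^+|_1 \le 1$ (hence $Ray(\M^+,1)\ge\omega$, ruling out the "small radius" regime where the slope would be positive) forbids the break. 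The same for $\M^-$ at $\infty$. This is exactly the "Step 4" that the paper attributes to Christol, and I expect it to be the main obstacle: cleanly ruling out an intermediate slope break for $\M^+$ (resp. $\M^-$) between the center, where the radius is generic, and $\rho=1$, where we only a priori know $Ray \geq \omega$. The hypothesis $Ray(\M,1)=1$ is used at the end: it forces the product of the three radii (in the tensor-product sense, via \eqref{eq : tensor product radius} applied with the genericity/continuity remark) to be $1$ at $\rho=1$, and since each factor is $\leq 1$, each must equal $1$.
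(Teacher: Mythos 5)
There is a genuine gap at the central step of your argument, and it is exactly where the real work of the proof lies. You claim that a slope break for $\rho\mapsto Ray(\d-g^+(T),\rho)$ between $\rho=0$ (where the relative radius is $1$) and $\rho=1$ is forbidden because $|g^+|_1\leq 1$ forces $Ray(\d-g^+(T),1)\geq\omega$. But $Ray\geq\omega$ at $\rho=1$ is perfectly compatible with a break: for $g^+(T)=cT$ with $\omega<|c|\leq 1$ one has $Ray(\d-cT,\rho)=\min(\rho,\omega/|c|)$, which equals $\rho$ near $0$ and satisfies $|g^+|_1\leq 1$, yet equals $\omega/|c|<1$ at $\rho=1$. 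The small radius Lemma \ref{small radius2} constrains nothing in the range $[\omega\rho,\rho]$, so log-affinity, continuity and the bound $R\geq\omega$ cannot by themselves yield $R=1$. The paper closes this gap with two ingredients absent from your proposal: first, Lemma \ref{Katz} combined with Lemma \ref{|a_i|<1} (a reduction mod $\mathfrak{p}_K$ of the iterates $g_{[s]}$ in $k((t))$, giving $|a_i|<1$ for all $i\leq -1$ and hence $|g^-|_1<1$) upgrades $R\geq\omega$ to the strict inequality $R>\omega$; second, and decisively, one takes Frobenius antecedents of $\d-g^-$, $\d-a_0$, $\d-g^+$, observes that their sum is an antecedent of the solvable equation $\d-g$ and hence is again solvable, reruns the previous steps for the antecedent, and uses the relation between the radius of a module and that of its antecedent to obtain $R>\omega^{1/p}$; iterating gives $R>\omega^{1/p^h}$ for every $h$, whence $R=1$. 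Without this Frobenius bootstrap the statement is out of reach, and your own text acknowledges that the anti-break step is ``the main obstacle'' without supplying a valid argument for it.

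Two further points are wrong as stated. The tensor-product formula \eqref{eq : tensor product radius} is a \emph{minimum} formula, $Ray(M\otimes N,\rho)\geq\min(Ray(M,\rho),Ray(N,\rho))$, not a product formula, and it bounds the tensor product from \emph{below} by the factors --- the wrong direction for your concluding sentence ``the product of the three radii is $1$, so each factor is $1$''. To exploit $Ray(\d-g,1)=1$ one must run the case analysis of the paper's Step 2, using the equality case of \eqref{eq : tensor product radius} when the radii differ together with continuity and concavity, to show that the two smallest of $R^-,R^0,R^+$ must coincide and that $R^0\geq R^-=R^+$. Finally, $|a_0|\leq 1$ does \emph{not} imply that $\d-a_0$ is solvable at $1$: for $a_0\in\O_K^\times$ with $a_0\notin\mathbb{Z}_p$ one finds $Ray(\d-a_0,1)=\omega$. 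In the paper, $R^0=1$ is deduced only at the very end from $R^-=R^+=1$ via the tensor-product argument, and $a_0\in\mathbb{Z}_p$ is a consequence (Corollary \ref{g^+ is trivial}), not an input.
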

\begin{proof}\emph{---Step 1:} 
By \eqref{s_x(T)}, the equation $\d-g^{-}(T)$ (resp.
$\d-g^{+}(T)$) has a convergent solution at $\infty$ (resp. at
$0$), hence $Ray(\d-g^{-}(T),\rho)=\rho$, for large values of
$\rho$ (resp. $Ray(\d-g^{-}(T),\rho)=\rho$, for $\rho$ close to
$0$). On the other hand, a direct computation proves that there is a $R^0>0$ such that
$Ray(\d-a_0,\rho)=R^0\cdot\rho$, for all $\rho$. Let
\begin{eqnarray}
R^-&:=&Ray(\d-g^{-}(T),1)\;,\\
R^+&:=&Ray(\d-g^{+}(T),1)\;,\\
R^0&:=&Ray(\d-a_0,1)\;.
\end{eqnarray}
We have to prove that $R_0=R^-=R^+=1$.\\

\emph{---Step 2:} We begin by proving that  
$R^+=R^-$, 
and that $R^0\geq R^-=R^+$. In the 
following picture $R:=R^-=R^+$, and for all operators $L$, 
we let $r:=\log(\rho)$ and 
$R(L,r):=\log(Ray(L,\rho)/\rho)$.

\begin{center}
\begin{picture}(300,80)
\put(150,0){\vector(0,1){80}} \put(0,60){\vector(1,0){300}}
\put(260,65){$r=\log(\rho)$} \put(155,75){$R(r)$}
\put(0,62){\begin{tiny}$0\leftarrow\rho$\end{tiny}}
\put(50,60){\line(6,-1){60}} 
\put(110,50){\line(2,-1){30}} 
\put(140,35){\line(2,-5){10}} 
\put(147.5,7.5){$\bullet$}\put(152,7.5){\tiny{$\log(R)$}}
\put(170,55){\line(6,1){30}} 
\put(170,55){\line(-1,-1){15}} 
\put(155,40){\line(-1,-6){5}} 
\put(0,23){\line(1,0){300}} 
\put(147.5,57.5){$\bullet$} 
\put(83,75){\begin{tiny}$R(\d+g(T),0)$\end{tiny}}
 \put(135,75){\vector(1,-1){12}}

 \put(80,55){\circle{10}}     
 \put(60,45){\line(2,1){15.5}}
 \put(0,40){\begin{tiny}$R(\d-g^+(T),r)$\end{tiny}}
 \put(180,55){\circle{10}}     
 \put(200,45){\line(-2,1){15.5}}
 \put(200,40){\begin{tiny}$R(\d-g^-(T),r)$\end{tiny}}
 \put(100,23){\circle{10}}     
 \put(80,13){\line(2,1){15.5}}

 \put(0,10){\begin{tiny}$R(\d-a_0,r)=\log(R_0)$\end{tiny}}
\put(147.5,0){$\bullet$} 
\put(152.5,0){\begin{tiny}$\log(\omega)$\end{tiny}}
\qbezier[100](0,2.5)(150,2.5)(300,2.5)
\put(50,-2){\begin{tiny}$\downarrow$small
radius$\downarrow$\end{tiny}}
\end{picture}
\end{center}

Since $\d-g$ is the tensor product of $\d-g^-$, 
$\d-g^+$, and $\d-a_0$, we deduce from point iii) of Remark \ref{remark : properties} that if two among 
$R^-,R^+,R^0$ are $1$, then the third is also equal to $1$.

Assume now by contrapositive that at least two among 
$R^-,R^+,R^0$ are strictly less than $1$. Then either 
$R^-<1$ or $R^+<1$. We want to prove that 
$R^+=R^-$, and that $R^0\geq R^-=R^+$. 

We assume for instance that $R^-<1$, the case 
where $R^+<1$ can be proved symmetrically. 

The function $r\mapsto R(\d-g^{-}(T),r)$ is concave, 
and $Ray(\d-g^-(T),1)=1$ if and only if the
slope of $r\mapsto R(\d-g^{-}(T),r)$ is $0$ for 
$r\to 0^+$.

The map $r\mapsto R(\d-g^{-}(T),r)$ for $r\to 0^+$ is 
strictly positive and the slope of $R(\d-a_0,r)=\log(R_0)$ 
is $0$. We deduce from point iii) of Remark \ref{remark : properties} that $Ray(\d-g^-(T),\rho)\neq Ray(\d-a_0,\rho)$ 
with the possible exception of an isolated $\rho$. 
Hence 
$Ray(\d-(a_0+g^-(T)),\rho)=
\min(Ray(\d-g^-(T),\rho),\rho R^0)$, for all
$\rho>1$ close to $1$. By continuity, this equality holds 
at $\rho=1$, that is 
\begin{equation}
Ray(\d-(a_0+g^-(T)),1)\;=\;\min(R^-,R^0)\;.
\end{equation}

Now since
$\d-g(T)$ is the tensor product of $\d-g^+(T)$ and
$\d-(a_0+g^-(T))$, and since $Ray(\d-g(T),1)=1$, we 
have again by point iii) of Remark 
\ref{remark : properties} that
\begin{equation}
R^+\;:=\;Ray(\d-g^+(T),1) \;=\;
Ray(\d-(a_0+g^-(T)),1)\;=\;\min(R^-,R^0)\;.
\end{equation}

We now claim that $R^0\geq R^-$, so the previous
equality implies $R^+=R^-$. Indeed if $R^->R^0$, 
then $R^+=R^0$.
Hence, as above, by concavity we deduce that 
for all $\rho<1$ one has $Ray(\d-g^{+}(T),\rho)\neq
Ray(\d-a_0,\rho)$, and that 
$Ray(\d-(a_0+g^{+}(T)),1)=R^0<R^-$. This implies 
$Ray(\d-g(T),1)=\min(R^0,R^-)=R^0<1$, 
contradicting the solvability of $\d-g$. Hence we must have $R^0\geq R^-=R^+$.\\

\emph{---Step 3:} If $R$ denotes the number 
$R^-=R^+$, then we have $R\geq \omega$. 
Indeed if $R^-<\omega$
or $R^+<\omega$, then, by \ref{small radius2}, $|g^-(T)|_1>1$ or
$|g^+(T)|_1>1$, hence $|g(T)|_1>1$ which is in contradiction with
the small radius lemma \ref{small radius2}, 
since the equation $\d-g(T)$ is solvable.\\

\emph{---Step 4:} We now prove that $R>\omega$. For 
this we need two lemmas:

\begin{lemma}[\protect{\cite[4.8.5]{Ch}}]\label{Katz}
Let $\d-g(T)$, $g(T)\in\mathcal{E}_K$, $|g(T)|_1\leq 1$ be some
equations. Then $Ray(\d-g(T),1)>\omega$ if and only if
$|g_{[s]}(T)|_1<1$, for some $s\geq 1$.\footnote{See 
Lemma \ref{q-Katz} for the q-analogue of this
lemma.}\hfill$\Box$
\end{lemma}

\begin{lemma}\label{|a_i|<1}
If $Ray(\d-g(T),1)>\omega$, where $g(T)=\sum a_iT^i$, then
$|a_i|<1$, for all $i\leq -1$.
\end{lemma}
\begin{proof} The matrix of $d/dT$ is $g_{[1]}:=g(T)/T$. By definition one
has
\begin{eqnarray*}
Ray(\d-g(T),1)=Ray(d/dT-g_{[1]}(T),1)&=&
\min\bigl(1,\liminf_s(|g_{[s]}(T)|_1/|s!|)^{-1/s}\bigr)\\
&=&\min\bigl(1,\omega\cdot\liminf_s(|g_{[s]}(T)|_1)^{-1/s}\bigr)\;,
\end{eqnarray*}
where $g_{[s]}(T)$ is associated to the derivation
$(\frac{d}{dT})^s$. Since $Ray(\d-g(T),1)>\omega$, 
hence $\lim_{s\to\infty}|g_{[s]}(T)|_1=0$. In particular
$|g_{[s]}(T)|_1<1$, for some $s\geq 1$. Moreover, by 
the small radius lemma \ref{small radius2}, 
we have $|g(T)|_1\leq 1$.
We proceed by contrapositive: let $-d$ be the smallest index such that $|a_{-d}|=1$. The
reduction of $g_{[1]}(T)=g(T)/T$ in $k(\!(t)\!)$ is of the form
$\overline{g_{[1]}(T)}=\overline{a}_{-d}t^{-d-1}+\cdots$. If
$-d\leq -1$, then an induction on the equation
$g_{[s+1]}=\frac{d}{dx}(g_{[s]})+g_{[s]}g_{[1]}$ shows that
$\overline{g_{[s]}(T)}=\overline{a}_{-d}^st^{(-d-1)s}+\cdots\neq
0$. This is in contradiction with the fact that
$|g_{[s]}(T)|_1<1$,
for some $s\geq 1$.
\end{proof}

Let us show now that $R>\omega$. Since $R^+=R^-=R$, it is
sufficient to show that $R^->\omega$. By Lemma \ref{|a_i|<1}, we have
$|a_i|<1$, for all $i\leq -1$. Since $\lim_{i\to-\infty}|a_i|=0$,
hence  $|g^-(T)|_1<1$. Then Lemma \ref{Katz} implies
$R^->\omega$.\\

\emph{---Step 5:} Since $R>\omega$, then we can take the
antecedent by Frobenius of $\d-g^-(T)$, $\d-g^+(T)$, $\d-a_0$.
More precisely, there exists $f^+(T)=\sum_{i\geq
0}b^+_iT^i\in\a([0,1[)^{\times}$, $f^-(T)=\sum_{i\leq
0}b^-_iT^i\in\a([1,\infty])^{\times}$, and there are functions
$g^{(1),-}(T)=\sum_{i\leq 0}a_i^{(1),-}T^i$,
$g^{(1),+}(T)=\sum_{i\geq 0}a_i^{(1),+}T^i$, $b_0\in K$ such that
\begin{eqnarray*}
pb_0&=&a_0+n\;,\quad\textrm{ for some } n\in\mathbb{Z}\;,\\
pg^{(1),-}(T^p)^\sigma&=&g^-(T)+\frac{\d(f^-(T))}{f^-(T)}\;,\\
pg^{(1),+}(T^p)^\sigma&=&g^+(T)+\frac{\d(f^+(T))}{f^+(T)}\;,
\end{eqnarray*}
where $\sigma:K\to K$ is an endomorphism of fields lifting of the $p$-th power map of $k$, and 
$(\sum a_i T^i)^\sigma$ means $\sum \sigma(a_i) T^i$.

We see immediately that $b_0^+\neq 0$ and $b_0^-\neq 0$, and that
$v_T(\d(f^+)/f^+)\geq 1$ and $v_{T^{-1}}(\d(f^-)/f^-)\geq 1$,
where $v_T$ is the $T-$adic valuation, and $v_{T^{-1}}$ is the
$T^{-1}-$adic valuation. Since $g^-(T)$ and $g^+(T)$ have no
constant term, we deduce that  $a_0^{(1),+}=0$ and
$a_0^{(1),-}=0$. Observe now that both $f^-$ and 
$f^+$ belong to $\mathcal{E}_K^{\times}$, hence
$\d-\bigl(g^{(1),-}(T)+b_0+g^{(1),+}(T)\bigr)$ is an 
antecedent of Frobenius of $\d-g(T)$, and it is then 
solvable.\\

\emph{--- Step 6:} Steps $1$, $2$, $3$, $4$ are still true for the
antecedent. In particular, if we set
\begin{eqnarray}
R^-(1)&:=&Ray(\d-g^{(1),-}(T),1)\;,\\
R^+(1)&:=&Ray(\d-g^{(1),+}(T),1)\;,\\
R^0(1)&:=&Ray(\d-b_0,1)\;,
\end{eqnarray}
then we must have $R^-(1)=R^{+}(1)>\omega$. Let
$R(1):=R^{-}(1)=R^{+}(1)$, then $R(1)=R^{1/p}$ by the property of
the antecedent. This implies $R>\omega^{1/p}$. 

Now the condition $R(1)>\omega$, guarantee the 
existence of the antecedent of the antecedent, and the 
process can be iterated indefinitely. This shows that
$R>\omega^{1/p^h}$ for all $h\geq 0$, that is $R=1$. 
\end{proof}

\begin{corollary}\label{g^+ is trivial}
We have $a_0 \in \mathbb{Z}_p$ and
$\d-g^+(T)$ is trivial. 
\end{corollary}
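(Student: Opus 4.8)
The plan is to read everything off Proposition \ref{division of the problem over Amice}, which tells us that $\d-a_0$ and $\d-g^+(T)$ are both solvable at $\rho=1$. For the first assertion, I would recall the classical computation of the radius of $\d-a_0$: its Taylor solution at a $1$-generic point $t_1$ is $(T/t_1)^{a_0}=\sum_k\binom{a_0}{k}t_1^{-k}(T-t_1)^k$, whose radius of convergence is $\liminf_k|\binom{a_0}{k}|^{-1/k}$, so $Ray(\d-a_0,1)=1$ holds exactly when $|\binom{a_0}{k}|\le 1$ for all $k$, i.e. exactly when $a_0\in\mathbb{Z}_p$ (see \cite{Ch}). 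Solvability of $\d-a_0$ therefore forces $a_0\in\mathbb{Z}_p$.

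For the triviality of $\d-g^+(T)$, note first that $g^+(T)=\sum_{i\ge 1}a_iT^i$ has bounded coefficients and no polar part, hence $g^+\in\a_K([0,1[)$, as do all the functions $g^+_{[k]}$ attached to it; thus $\d-g^+$ is a differential module over the open unit disk. I claim its Taylor solution at $0$, namely $E(T):=\exp\bigl(\sum_{i\ge 1}\tfrac{a_i}{i}T^i\bigr)$, already converges on the whole open unit disk. Indeed, since $\d-g^+$ has a convergent solution at $0$ (Step 1 of the proof of Proposition \ref{division of the problem over Amice}), $Ray(\d-g^+,\rho)=\rho$ for $\rho$ near $0$, while $Ray(\d-g^+,1)=1$ by the Proposition; combining this with the concavity of $r\mapsto\log\bigl(Ray(\d-g^+,e^r)/e^r\bigr)$ (already used in Step 2 of that proof) and with $Ray(\d-g^+,\rho)\le\rho$, one gets $Ray(\d-g^+,\rho)=\rho$ for every $\rho\in\;]0,1[$. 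Because $|g^+_{[k]}(0)|\le|g^+_{[k]}|_\rho$ for all $\rho<1$, the radius of convergence of $E(T)=\sum_k g^+_{[k]}(0)T^k/k!$ is at least $\liminf_k(|g^+_{[k]}|_\rho/|k!|)^{-1/k}\ge Ray(\d-g^+,\rho)$ for every $\rho<1$, hence at least $1$. So $E\in\a_K([0,1[)$; as $E(0)=1$ and $E$ solves the first order equation $\d E=g^+E$, it vanishes nowhere on the open disk, so $E\in\a_K([0,1[)^\times$ and $\d-g^+$ is trivial over $\a_K([0,1[)$.

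It remains to descend to $\mathcal{E}_K$. If $h=\sum_{i\ge 0}b_iT^i$ is a unit of $\a_K([0,1[)$ then $h$ has no zero in $\{|T|<1\}$, so its Newton polygon has nonnegative slopes, i.e. $|b_i|\le|b_0|$ for all $i$; in particular $h$ is bounded and lies in $\mathcal{E}_K$, and the same applies to $h^{-1}\in\a_K([0,1[)$, so $h\in\mathcal{E}_K^\times$. Applied to $h=E(T)$ this gives $E\in\mathcal{E}_K^\times$ with $\d(E)/E=g^+(T)$, i.e. $\d-g^+(T)$ is trivial over $\mathcal{E}_K$. The only substantial point is the equality $Ray(\d-g^+,\rho)=\rho$ throughout $]0,1[$, obtained from concavity together with the endpoint information furnished by the Proposition; the remaining steps are routine. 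One can also reach the same conclusion without radii, by showing directly that the solvability hypothesis forces $E(T)$ to be an Artin--Hasse type exponential with bounded (in fact integral) coefficients — the point of view taken up in the rest of the paper.
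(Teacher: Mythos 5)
Your proof is correct and takes essentially the same route as the paper: the paper invokes the transfer theorem to get convergence of the Taylor solution at $0$ of $\d-g^+(T)$ in the open unit disk, and then, exactly as you do, observes that this solution is invertible (its inverse solving the dual equation), hence bounded, hence in $\mathcal{E}_K^\times$. The only differences are that you re-derive the transfer step by hand via log-concavity of $\rho\mapsto Ray(\d-g^+,\rho)$ instead of citing it, and that you spell out the classical equivalence $Ray(\d-a_0,1)=1\Leftrightarrow a_0\in\mathbb{Z}_p$, which the paper leaves implicit.
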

\begin{proof} 
By the transfer theorem, the Taylor solution at $0$ of
$\d-g^+(T)$ is convergent in the open unit disk. This 
solution is invertible with inverse the solution of the dual 
differential module, hence it is bounded and belongs to 
$\mathcal{E}_K$. 
\end{proof}
The following corollary, together with Corollary 
\ref{canonical ext over amice}, constitute
the analogue of the Katz's 
canonical extension functor \cite{Katz-Can}:
\begin{corollary}[Katz's canonical extension]
Let $M$ be a solvable rank one differential module over 
$\mathcal{E}_K$ 
represented in a basis by the operator $\d-g(T)$, with 
$g(T)=\sum_{i\in\mathbb{Z}}a_iT^i\in\mathcal{E}_K$.
Then there exists a basis of $M$ in which the associated 
operator is
\begin{equation}
\d-(a_0+g^-(T))\;.
\end{equation}
In particular $M$ comes by scalar extension from a 
differential module over the closed unit disk 
$D:=\{|T|\geq 1\}\cup\{\infty\}$ centered at $\infty$. 
It has a regular singularity at $\infty$ if and only if 
$a_0\in\mathbb{Z}$, and it has no singularities on $D$ 
otherwise. 
\end{corollary}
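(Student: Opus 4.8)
The plan is to build the asserted basis in three moves and then read the singularity type off the exponent $a_0$. First, by Corollary~\ref{g^+ is trivial} the module $\d-g^+(T)$ is trivial, so its Taylor solution $s(T)$ at $0$ (the one appearing in that proof) is a unit of $\mathcal{E}_K$ satisfying $\d(s)=g^+(T)\,s$. Setting $h:=s(T)^{-1}\in\mathcal{E}_K^\times$ we get $\d(h)/h=-g^+(T)$, so the change-of-basis rule $g\mapsto g+\d(h)/h$ carries $\d-g(T)$ to $\d-(a_0+g^-(T))$. This produces the claimed basis. Next I would check that $a_0+g^-(T)$ already converges on $D$: since $\d-(a_0+g^-(T))\cong\d-g(T)$ is solvable we have $Ray(\d-(a_0+g^-(T)),1)=1>\omega$, so Lemma~\ref{|a_i|<1} gives $|a_i|<1$ for all $i\le -1$, and together with $\lim_{i\to-\infty}|a_i|=0$ this shows $a_0+g^-(T)\in\O_K[[T^{-1}]]\subset\a_K(D)$. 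Hence $\d-(a_0+g^-(T))$ defines a differential module $N$ over $\a_K(D)$ with $M\cong N\otimes_{\a_K(D)}\mathcal{E}_K$, which is the desired descent to the closed disk $D$ centered at $\infty$.

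It then remains to analyse the only candidate singular point, namely $\infty$, since at every interior point of $D$ the coefficient $a_0+g^-(T)$ is analytic and $\d$ is an ordinary derivation. Arguing as in Corollary~\ref{g^+ is trivial}, but at $\infty$ in place of $0$, the Taylor solution of $\d-g^-(T)$ at $\infty$ is a unit of $\a_K(D)$ (its radius is maximal by solvability together with $|a_i|<1$), so $\d-g^-(T)$ is trivial over $\a_K(D)$ and $N\cong(\a_K(D),\d-a_0)$. Thus the local nature of $N$ at $\infty$ is governed entirely by the exponent $a_0$, which lies in $\mathbb{Z}_p$ by Corollary~\ref{g^+ is trivial}. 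The horizontal sections of $\d-a_0$ are the solutions of $\d y=a_0 y$, that is $y=T^{a_0}$. When $a_0\in\mathbb{Z}$ this is a Laurent monomial, a rational function on $D$ carrying a genuine zero or pole at $\infty$, and this exhibits $\infty$ as a regular singular point of $N$ with integral exponent $a_0$ (trivial exactly when $a_0=0$). When $a_0\in\mathbb{Z}_p\setminus\mathbb{Z}$ no rational, nor meromorphic-on-$D$, horizontal section exists, the exponent is a purely transcendental $p$-adic number carrying no algebraic zero or pole, and I would conclude that $N$ has no singularity on $D$, which is the stated dichotomy.

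The delicate point, and the place where the argument must be set up with care, is precisely this final dichotomy and the \emph{direction} of its two halves. The integral case is the easy one: the monomial $T^{a_0}$ is visibly rational with a zero or pole at $\infty$, so $\infty$ is a genuine regular singularity. The hard part is to justify that a non-integral $p$-adic exponent yields \emph{no} singularity on $D$: this is not the removal of a pole by a gauge transformation (no unit of $\a_K(D)$ changes $a_0$), but rather the statement that the formal regular-singular datum of $\d-a_0$ with $a_0\in\mathbb{Z}_p\setminus\mathbb{Z}$ carries no algebraic singularity in the sense relevant here, because its only horizontal section is the transcendental object $T^{a_0}$. I would isolate this as a lemma phrased through the $p$-adic exponent of the connection $\d-a_0$ and its behaviour under the Frobenius antecedents already exploited in Proposition~\ref{division of the problem over Amice}: there the exponent $b_0$ of each antecedent satisfies $pb_0=a_0+n$ with $n\in\mathbb{Z}$, and $a_0\in\mathbb{Z}$ is exactly the case in which this sequence of exponents stays rational, while $a_0\in\mathbb{Z}_p\setminus\mathbb{Z}$ is the genuinely $p$-adic case. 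A secondary, purely technical point is the convergence estimate needed to absorb $g^-(T)$ over $\a_K(D)$, which follows from solvability ($Ray>\omega$) via Lemma~\ref{small radius2} and Lemma~\ref{|a_i|<1} but should be recorded explicitly.
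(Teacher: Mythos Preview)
The paper gives no proof for this corollary; it is presented as an immediate consequence of Corollary~\ref{g^+ is trivial}. Your first paragraph is exactly the intended argument: triviality of $\d-g^+(T)$ supplies the base change to $\d-(a_0+g^-(T))$, and the descent to $D$ follows since $g^-(T)=\sum_{i\le -1}a_iT^i$ with $\lim_{i\to-\infty}|a_i|=0$ already lies in $\a_K([1,\infty])$ (the appeal to Lemma~\ref{|a_i|<1} is correct but not needed for this step).

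There is, however, a genuine gap in your second paragraph: the claim that $\d-g^-(T)$ is trivial over $\a_K(D)$ is unjustified and in general false. Solvability at $\rho=1$ together with transfer only gives convergence of the Taylor solution at $\infty$ on the \emph{open} disk $\{|T|>1\}$, not on the closed disk $D=\{|T|\ge 1\}$. Whether that solution $E(\sum_{n\in\J}\lb_{-n}T^{-n},1)$ actually lies in $\a_K([1,\infty])$ --- equivalently in $\mathcal{E}_K$, for a series in $T^{-1}$ --- is precisely the question the paper declares open in Remark~\ref{missing morphism}; were your claim always true, every solvable rank one module over $\mathcal{E}_K$ would be isomorphic to some $\d-a_0$, and there would be nothing left to classify. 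For the singularity discussion at $\infty$ you only need the much weaker \emph{formal} triviality of $\d-g^-(T)$ in $K[[T^{-1}]]$, which is automatic since $\exp(\sum_{i\le -1}a_iT^i/i)\in 1+T^{-1}K[[T^{-1}]]$; this already identifies the formal type at $\infty$ with that of $\d-a_0$, after which the dichotomy on $a_0\in\mathbb{Z}_p$ is a plain statement about the exponent and does not require the Frobenius-antecedent digression of your last paragraph.
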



\subsection{Criterion of solvability}

Following \cite{Rk1} we now introduce an exponential 
series which is the solution of our differential equations. 
We refer to \cite{Rk1} for all notations and properties.

We set $\J:=\{n\in\mathbb{Z}\;|\; (n,p)=1, n\geq 1 \}$.

For all ring $A$ (not necessarily with unit element) we 
denote by $\W(A)$ the ring of $p$-typical Witt vectors 
of infinite length with coefficients in $A$.
 Its elements are sequences $\bs{a}=(a_0,a_1,\ldots)$ 
of elements of $A$. For all $m\geq 0$ we call 
\emph{phantom vector} of $\bs{a}$ the tuple 
$\phi_m(\bs{a}):=a_0^{p^m}+pa_1^{p^{m-1}}+\cdots+p^ma_m$. 
The map $\W(A)\to A^{\mathbb{N}}$ associating to 
$\bs{a}$ the tuple $(\phi_0(\bs{a}),\phi_1(\bs{a}),
\ldots)$ is a morphism of functors in rings. 
In order to make a more evident distinction between 
Witt vectors and phantom components, we denote Witt 
vector by the letter $\lambda$ and phantom components 
by the letter $\phi$, moreover we also use a bracket 
$\ph{\phi_0,\phi_1,\ldots}$ to indicate an element of the 
Ring $A^{\mathbb{N}}$.

Let now $A=T\O_K[[T]]$. We now recall some notions 
from \cite[Section 4.3]{Rk1}.
For all $\bs{\lambda}=(\lambda_0,\lambda_1,
\ldots)\in\W(K)$ and all integer $d>0$ we set
\begin{equation}
\bs{\lambda}T^d\;:=\;
(\lambda_0T^d,\lambda_1T^{pd},\lambda_2T^{p^2d},
\ldots)\;\in\;\W(TK[[T]])\;.
\end{equation}
To a sum 
$\sum_{d>0}\bs{\lambda}_dT^d\in\W(TK[[T]])$ we 
associate the following exponential of Artin-Hasse type
\begin{equation}
E(\sum_{d>0}\bs{\lambda}_dT^d,1)\;=\;
\prod_{d>0}\exp(\sum_{m\geq 
0}\phi_{d,m}\frac{T^{dp^m}}{p^m})
\end{equation}
where for all $d>0$ the tuple $(\phi_{d,0},\phi_{d,1},
\ldots)$ is the phantom vector of $\bs{\lb}_d$.
The map $\W(K)\to K^{\mathbb{N}}$ being an 
isomorphism, it easy to prove that any exponential of the 
form $\exp(\sum_{d>0}b_d\frac{T^d}{d})
\in 1+TK[[T]]$ 
can be uniquely decomposed as  
\begin{equation}
\exp(\sum_{d>0}b_d\frac{T^d}{d})\;=\;
\exp(\sum_{n\in\J}
\sum_{m\geq 0}b_{np^m}
\frac{T^{np^m}}{np^m})\;=\;
\exp(\sum_{n\in\J}
\sum_{m\geq 0}\phi_{n,m}
\frac{T^{np^m}}{p^m})\;=\;
E(\sum_{n\in\J}\bs{\lambda}_nT^n,1)\;,
\end{equation}
where $\phi_{n,m}=b_{np^m}/n$, 
and $\bs{\lb}_n\in\W(K)$ is defined as the unique Witt 
vector with phantom vector $(\phi_{n,0},\phi_{n,1},
\ldots)$. We refer to  \cite[Section 4.3]{Rk1} for further 
properties.

The following Lemma asserts that 
solutions of rank one solvable differential equations over 
the open unit disk are those exponentials as above whose 
Witt vectors have coefficients in $\O_K$.

\begin{lemma}\label{criteria of solvability lemma2}
The differential equation $\d-g^+(T)$, $g^+(T)=\sum_{i\geq
1}a_iT^i\in\a([0,1[)$ is solvable if and only if there exists a
family $\{\lb_{n}\}_{n\in\J}$, $\lb_n\in\W(\O_K)$, with phantom
components $\phi_{n}=(\phi_{n,0},\phi_{n,1},\ldots)$ satisfying
\begin{equation}\label{a_np^m=n phi_n,m}
a_{np^m}=n\phi_{n,m}\;,\qquad\textrm{ for all }n\in\J,\; m\geq
0\;.
\end{equation}
In other words, we have $\exp(\sum_{i\geq 1}a_i
\frac{T^i}{i})=E(\sum_{n\in\J}\lb_nT^n,1)$, where
\begin{equation}\label{E(sum_J lb_n T^n,1)}
E(\sum_{n\in\J}\lb_nT^n,1):=\exp(\sum_{n\in\J}\sum_{m\geq
0}\phi_{n,m}T^{np^m}/p^m)\;.
\end{equation}
\end{lemma}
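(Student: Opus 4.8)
The plan is to reduce the statement to a question about convergence of an exponential series, using the Artin--Hasse--Dwork exponential as the basic object. First I would note that the differential equation $\d - g^+(T)$ with $g^+(T)=\sum_{i\geq 1}a_iT^i\in\a([0,1[)$ has the explicit Taylor solution at $0$ given (up to normalization) by $y(T)=\exp\bigl(\int g^+(T)\,dT/T\bigr)=\exp\bigl(\sum_{i\geq 1}a_i T^i/i\bigr)\in 1+TK[[T]]$. Since the equation is rank one, solvability at $\rho=1$ is equivalent to $Ray(\d-g^+(T),1)=1$, and by the transfer theorem (as already invoked in Corollary~\ref{g^+ is trivial}) this is equivalent to $y(T)$ converging on the open unit disk together with its inverse $1/y(T)$; so the real content is: $y(T)\in 1+T\O_K[[T]]$ converges on $\{|T|<1\}$ if and only if the Witt-vector coefficients $\lb_n$ of its canonical decomposition lie in $\W(\O_K)$.

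The key computational step is the decomposition recalled just before the Lemma: any $\exp(\sum_{d>0}b_d T^d/d)\in 1+TK[[T]]$ factors uniquely as $E(\sum_{n\in\J}\lb_n T^n,1)=\prod_{n\in\J}\exp\bigl(\sum_{m\geq 0}\phi_{n,m}T^{np^m}/p^m\bigr)$ where $\phi_{n,m}=b_{np^m}/n$ and $\lb_n\in\W(K)$ is the unique Witt vector with phantom vector $(\phi_{n,0},\phi_{n,1},\dots)$. With $b_i=a_i$ this is exactly condition \eqref{a_np^m=n phi_n,m}. So I would organize the proof around the equivalence
\[
\text{(i) } y(T) \text{ and } 1/y(T) \text{ converge on }\{|T|<1\}\ \Longleftrightarrow\ \text{(ii) } \lb_n\in\W(\O_K)\ \forall n\in\J.
\]
For the direction (ii)$\Rightarrow$(i): if all $\lb_n\in\W(\O_K)$, each factor $\exp(\sum_{m\geq 0}\phi_{n,m}T^{np^m}/p^m)$ is a Dwork--Artin--Hasse exponential of a Witt vector with integral coefficients, hence (this is the classical Dwork estimate, and is the content cited from \cite[Section 4.3]{Rk1}) lies in $1+T^n\O_K[[T]]$ and converges on the open unit disk; the infinite product over $n\in\J$ then converges $T$-adically with coefficients in $\O_K$ and on every disk of radius $<1$, and the same argument applied to $-\lb_n$ gives the inverse. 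Hence $y\in\O_{\mathcal{E}_K}^\times$ and the equation is solvable.

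For the direction (i)$\Rightarrow$(ii) — which I expect to be the main obstacle — I would argue by contradiction or by Frobenius descent exactly as in Step~4--Step~6 of Proposition~\ref{division of the problem over Amice}. Suppose some $\lb_n\notin\W(\O_K)$; take the smallest $n\in\J$ and then the smallest Witt-coordinate index $j$ for which the $j$-th coordinate of $\lb_n$ has absolute value $>1$. One shows this forces a phantom component $\phi_{n,m}$, and hence a coefficient $a_{np^m}=n\phi_{n,m}$ of $g^+$, to have $|a_{np^m}|$ large enough that $Ray(\d-g^+(T),1)$ drops below $1$ — concretely, one isolates the leading ``bad'' exponential factor $\exp(\lambda^{p^j} T^{np^j}/p^{\,\text{stuff}}+\cdots)$, compares radii via the tensor-product rule \eqref{eq : tensor product radius} and the Frobenius-antecedent estimate of Remark~\ref{remark : properties}(iv), and derives a contradiction with $Ray=1$. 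Alternatively, and perhaps more cleanly, one performs the Frobenius-antecedent construction of Step~5: solvability of $\d-g^+(T)$ with $R^+>\omega$ (guaranteed by Proposition~\ref{division of the problem over Amice}) lets one descend to $\d-g^{(1),+}(T)$, iterate, and track how the integrality of the $\lb_n$ at successive Witt-levels is forced by convergence at each stage — the Witt-vector Frobenius $\Fb$ acting on $(\lambda_0,\lambda_1,\dots)$ matching the differential-equation Frobenius antecedent. The delicate point throughout is the bookkeeping between the exponential $\exp(\sum a_i T^i/i)$, whose naive coefficients involve denominators $i!$, and the Witt/phantom reorganization that absorbs these denominators; making precise that ``$y$ bounded in $\mathcal{E}_K$'' is equivalent to ``$\lb_n$ integral'' (not merely that $y$ has a positive radius of convergence) is exactly where the Artin--Hasse structure, as opposed to a plain exponential, is essential, and I would lean on the estimates of \cite[Section 4.3]{Rk1} for that equivalence rather than reproving them.
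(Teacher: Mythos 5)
Your forward direction is fine and coincides with the paper's. The problem is the converse, where you correctly sense that the content is an iterated descent peeling off one Witt coordinate at a time, but you never supply the one mechanism that makes the iteration work. The paper's argument is: the small radius Lemma \ref{small radius2} gives $|a_i|\leq 1$, hence $|\lambda_{n,0}|=|a_n/n|\leq 1$; the exponential of the Teichm\"uller parts $(\lambda_{n,0},0,0,\ldots)$ then lies in $1+T\O_K[[T]]$, so the operator $\d-h^{(0)}$ with $h^{(0)}=\sum_{n,m}n\lambda_{n,0}^{p^m}T^{np^m}$ is solvable; the tensor quotient $\d-(g^+-h^{(0)})$ is solvable, and --- this is the key point --- $g^+(T)-h^{(0)}(T)=p\,g^{(1)}(T^p)$ \emph{identically}, because subtracting $(\lambda_{n,0},0,\ldots)$ from $\lb_n$ kills exactly the $m=0$ phantom components. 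Hence the antecedent is obtained by the elementary \emph{ramification} $T^p\mapsto T$ (no structure theorem, no unknown unit, no $\sigma$-twist), it is again solvable, and the small radius lemma applied to it yields $|a_{np}-n(a_n/n)^p|\leq|p|$, i.e.\ $|\lambda_{n,1}|\leq 1$; then iterate. Without the exact cancellation $g^+-h^{(0)}=p\,g^{(1)}(T^p)$ and the repeated use of the small radius lemma at each level, the induction does not close.

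Your two proposed routes both miss this. Route (a) (smallest ``bad'' index plus radius comparison) is not carried out and would be delicate to make precise. Route (b) invokes the Frobenius antecedent theorem of Step 5 of Proposition \ref{division of the problem over Amice} and Remark \ref{remark : properties}(iv); but that antecedent is only determined up to a base change $\d(f)/f$ and a twist by $\sigma$, so the Witt coordinates of $g^{(1),+}$ cannot be read off those of $g^+$ --- precisely the bookkeeping you would need. (Also, citing Proposition \ref{division of the problem over Amice} for $R^+>\omega$ is out of place: that proposition concerns $\mathcal{E}_K$, whereas here $g^+\in\a([0,1[)$ and solvability is the hypothesis, so $R^+=1$ is free.) Finally, deferring the equivalence ``integral Witt vectors $\Leftrightarrow$ convergence on the open disk'' to the material recalled from \cite[Section 4.3]{Rk1} begs the question: that decomposition is purely formal over $\W(K)$, and the integrality criterion is exactly what this lemma asserts and what the iteration above proves.
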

\begin{proof} 
The formal series $E(\sum_{n\in\J}\lb_nT^{n},1)\in
1+T\O_K[[T]]$ is solution of the equation
$L:=\d-\sum_{n\in\J}\sum_{m\geq 0}n\phi_{n,m}T^{np^m}$. Since this
exponential converges in the unit disk, then $Ray(L,\rho)=\rho$,
for all $\rho<1$, and $L$ is solvable. 

Conversely, assume that $\d-g^+(T)$
is solvable. Then the Witt vectors
$\lb_n=(\lambda_{n,0},\lambda_{n,1},\ldots)$ are defined by the
relation \eqref{a_np^m=n phi_n,m}. For example, for all $n\in\J$ we
have
\begin{equation}
\label{expliciting lambda_i in function of a_i}
\lambda_{n,0} = \frac{a_{n}}{n} \;\;,\qquad \lambda_{n,1} =
\frac{1}{p}\left(\frac{a_{np}}{n} - \bigl(\frac{a_n}{n}\bigr)^p
\right)\;.
\end{equation}
We must show that $|\lambda_{n,m}|\leq 1$, for all $n\in\J$,
$m\geq 0$.

\emph{---Step 1:} By the small radius Lemma \ref{small radius2},
we have $|a_i|\leq 1$, for all $i\geq 1$. Hence, by
\eqref{expliciting lambda_i in function of a_i}, for all $n\in\J$,
we have $|\lambda_{n,0}|\leq 1$. Then the exponential
$$E(\sum_{n\in\J}(\lambda_{n,0},0,0,\ldots)T^{n},1)=
\exp(\sum_{n\in\J}\sum_{m\geq
0}\lambda_{n,0}^{p^m}\frac{T^{np^m}}{p^m})$$ converges in the unit
disk and is solution of the operator $Q^{(0)}:=\d - h^{(0)}(T)$,
where $h^{(0)}(T)=\sum_{n\in\J}\sum_{m\geq
0}n\lambda_{n,0}^{p^m}T^{np^m}$. So $Q^{(0)}$ is 
solvable.

\emph{---Step 2:} The tensor product operator $\d - (g^+(T) -
h^{(0)}(T))$ is again solvable and satisfies
$g^+(T)-h^{(0)}(T)=p\cdot g^{(1)}(T^p)$, for some $g^{(1)}(T)\in T
K[[T]]$. In other words, the ``antecedent by ramification''
$\varphi_p^*$ of the equation $\d - (g^+(T) - h^{(0)}(T))$ is
given by $\d-g^{(1)}(T)$, which is then solvable.

\emph{---Step 3: } We observe that $g^{(1)}(T) \!=\!\frac{1}{p}
\sum_{n\in\J}\sum_{m\geq
0}(a_{np^{m+1}}-n(\frac{a_n}{n})^{p^{m+1}}) T^{np^{m}}$, and again
by the small radius lemma, we have
$|a_{np}\!-n(\frac{a_n}{n})^{p}|\leq |p|$, 
which implies $|\lambda_{n,1}|\leq 1$. 

The process can be iterated indefinitely. This proves that 
$|\lambda_{n,m}|\leq 1$ for all $n,m$.
\end{proof}

\begin{remark}\label{discussion}We shall now consider the general
case of an equation $\d-g(T)$, with
$g(T)=\sum_{i\in\mathbb{Z}}a_iT^i\in\mathcal{E}_K$, and get a
criterion of solvability. Suppose that $\d-g(T)$ is solvable. We
know that $\d-g^-(T)$, $\d-a_0$ and $\d-g^+(T)$ are all solvable
(cf. \ref{division of the problem over Amice}). We can then
consider $\d-g^-(T)$ as an operator on $]1,\infty]$ (instead of
$[1,\infty]$), and the precedent lemma \ref{criteria of
solvability lemma2} give us the existence of a family of Witt
vector $\{\lb_{-n}\}_{n\in\J}\subset\W(\O_K)$, satisfying
$a_{-np^m} = -n\phi_{-n,m}$, for all $n\in\J$, and all $m\geq 0$.
Conversely, suppose given two families $\{\lb_{-n}\}_{n\in\J}$ and
$\{\lb_{n}\}_{n\in\J}$, with $\lb_n\in\W(\O_K)$. Since the phantom
components of $\lb_n$ are bounded by $1$, then $|a_i|$ is bounded
by $1$, and then $g^+(T)$ belongs to $\mathcal{E}_K$.

What we need now is a condition on the family 
$\{\lb_{-n}\}_{n\in\J}$
in order that the series 
\begin{equation}
g^-(T):=\sum_{n\in\J}\sum_{m\geq 0}-n
\phi_{-n,m}T^{-np^m}
\end{equation} 
belongs to $\mathcal{E}_K$.
\end{remark}

\begin{proposition}\label{criteria for belong to Amice}
Let $\{\lb_{-n}\}_{n\in\J}$, $\lb_{-n}\in\W(\O_K)$, be a family of
Witt vectors. Let $\ph{\phi_{-n,0},\phi_{-n,1},\ldots}$ be the
phantom vector of
$\lb_{-n}:=(\lambda_{-n,0},\lambda_{-n,1},\ldots)$. The series
$$g^{-}(T):=\sum_{n\in\J}\sum_{m\geq 0}-n\phi_{-n,m}T^{-np^m}\;,$$
belongs to $\mathcal{E}_K$ if and only if
\begin{equation}\label{lb<1 and lim_n lb_n,m =0}
\left\{\begin{array}{rcl}
|\lambda_{-n,m}|<1&,& \textrm{ for all }n\in\J\;,\;\textrm{ for all }m\geq 0\;;\\
&&\\
\lim_{n\in\J,n\to\infty}\lambda_{-n,m}=0&,&\textrm{ for all }
m\geq 0\;,
\end{array}\right.
\end{equation}
as in the picture
\begin{center}
\begin{scriptsize}
\begin{picture}(120,60)
\put(0,10){\vector(0,1){40}} \put(0,10){\vector(1,0){120}}
\put(-10,40){$m$}\put(120,0){$n$}\put(130,0){.}
\put(17.5,8.25){$\bullet$} \put(37.5,8.25){$\bullet$}
\put(57.5,8.25){$\bullet$} \put(77.5,8.25){$\bullet$}
\put(97.5,8.25){$\bullet$}
\put(17.5,27.5){$\bullet$} \put(37.5,27.5){$\bullet$}
\put(57.5,27.5){$\bullet$} \put(77.5,27.5){$\bullet$}
\put(97.5,27.5){$\bullet$}
\put(17.5,47.5){$\bullet$} \put(37.5,47.5){$\bullet$}
\put(57.5,47.5){$\bullet$} \put(77.5,47.5){$\bullet$}
\put(97.5,47.5){$\bullet$}
\put(110,47.5){$\longrightarrow 0$}
\put(110,27.5){$\longrightarrow 0$}
\end{picture}
\end{scriptsize}
\end{center}
\end{proposition}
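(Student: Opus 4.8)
The statement is an ``if and only if'', and the natural way to organize it is to compute directly the Gauss norm of a partial sum of $g^-(T)$ and to pass to the limit. Recall that $g^-(T)=\sum_{n\in\J}\sum_{m\geq 0}-n\phi_{-n,m}T^{-np^m}$, and that the map sending a monomial $T^{-i}$ with $i\geq 1$ to its exponent $i$ realizes a bijection between the set of indices $i$ occurring and the set $\{np^m\;:\;n\in\J,\;m\geq 0\}=\Z_{\geq 1}$; so there is no collision of monomials, and the coefficient of $T^{-np^m}$ in $g^-$ is exactly $a_{-np^m}=-n\phi_{-n,m}$. Therefore $g^-\in\mathcal{E}_K$ if and only if (a) $\sup_{n,m}|n\phi_{-n,m}|=\sup_{n,m}|\phi_{-n,m}|<+\infty$ (here $|n|=1$ since $(n,p)=1$) and (b) $\lim_{i\to\infty}|a_{-i}|=0$, i.e. $|\phi_{-n,m}|\to 0$ as $np^m\to\infty$. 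The whole proof is then the translation of conditions (a)--(b) on the phantom components into conditions on the Witt coordinates $\lambda_{-n,m}$, using the hypothesis $\lb_{-n}\in\W(\O_K)$.

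\emph{From Witt coordinates to phantom components (the ``if'' direction).} Assume \eqref{lb<1 and lim_n lb_n,m =0}. From $\phi_{-n,m}=\lambda_{-n,0}^{p^m}+p\lambda_{-n,1}^{p^{m-1}}+\cdots+p^m\lambda_{-n,m}$ and $|\lambda_{-n,j}|<1$ one gets $|\phi_{-n,m}|<1$ for all $n,m$, which gives boundedness (a), and in fact $|a_{-i}|\le|a_{-i}|<1$. For (b): fix $\eps>0$; since $|p|<1$, there is $m_0$ with $|p^{m_0}|<\eps$, so for $m\geq m_0$ the tail $p^{m_0}\lambda_{-n,m_0}^{p^{m-m_0}}+\cdots+p^m\lambda_{-n,m}$ has norm $<\eps$ regardless of $n$; and the head $\lambda_{-n,0}^{p^m}+\cdots+p^{m_0-1}\lambda_{-n,m_0-1}^{p^{m-m_0+1}}$ is bounded by $\max_{0\le j<m_0}|\lambda_{-n,j}|$, which by the second line of \eqref{lb<1 and lim_n lb_n,m =0} (applied to the finitely many values $j=0,\dots,m_0-1$) is $<\eps$ for $n$ large. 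When $np^m\to\infty$ either $m\to\infty$ or (if $m$ stays bounded) $n\to\infty$, and in both regimes the above estimates give $|\phi_{-n,m}|\to 0$. Hence $g^-\in\mathcal{E}_K$.

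\emph{From phantom components back to Witt coordinates (the ``only if'' direction).} Assume $g^-\in\mathcal{E}_K$, so (a) and (b) hold. The point is to run the classical phantom-to-Witt inversion and track norms. Since $\lb_{-n}\in\W(\O_K)$ we already know $|\lambda_{-n,m}|\le 1$; we must upgrade this to strict inequality, and prove the limit. Using $\lambda_{-n,0}=\phi_{-n,0}=a_{-n}/(-n)$ and, inductively, $p^m\lambda_{-n,m}=\phi_{-n,m}-(\lambda_{-n,0}^{p^m}+\cdots+p^{m-1}\lambda_{-n,m-1}^{p})$: if at stage $m$ all of $\lambda_{-n,0},\dots,\lambda_{-n,m-1}$ satisfy $|\cdot|<1$, then the bracketed sum has norm $<1$, and together with $|\phi_{-n,m}|<1$ (part (a)) we can divide by $p^m$ only after checking the right-hand side is divisible by $p^m$ in $\O_K$ — which is exactly the content of $\lb_{-n}\in\W(\O_K)$ — and then one still needs $|\lambda_{-n,m}|<1$. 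This last strict inequality is the delicate point: it does \emph{not} follow from $|\phi_{-n,m}|<1$ alone (e.g. $\phi=\langle 1,1,\dots\rangle$ has $\lambda_0=1$), so the argument must use that \emph{every} $\phi_{-n,j}$, $j\le m$, is $<1$ in norm, together with the standard congruence $\phi_{-n,m}\equiv \phi_{-n,m-1}^{\,p}\pmod{p^m}$ among phantom components of an integral Witt vector; reducing mod $\mathfrak p_K$ and arguing by induction on $m$ shows $\overline{\lambda_{-n,m}}=0$ in $k$, i.e. $|\lambda_{-n,m}|<1$. For the limit $\lim_{n\to\infty}\lambda_{-n,m}=0$ at fixed $m$: induct on $m$; the case $m=0$ is $\lambda_{-n,0}=a_{-n}/(-n)$ and $|a_{-n}|\to 0$ by (b); for the inductive step solve $p^m\lambda_{-n,m}=\phi_{-n,m}-(\text{polynomial in }\lambda_{-n,0},\dots,\lambda_{-n,m-1})$, note the right-hand side $\to 0$ as $n\to\infty$ because $\phi_{-n,m}=a_{-np^m}/(-n)\to 0$ by (b) and each $\lambda_{-n,j}\to 0$ by induction, and since division by the fixed nonzero scalar $p^m$ is an isometry up to the constant $|p|^{-m}$, conclude $\lambda_{-n,m}\to 0$.

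\emph{Where the difficulty lies.} The routine half is the ``if'' direction — it is just the ultrametric estimate on a finite sum. The real work is the strict inequality $|\lambda_{-n,m}|<1$ in the ``only if'' direction: one must exploit that all lower phantom components are $<1$ and the integrality $\lb_{-n}\in\W(\O_K)$ simultaneously, via the mod-$\mathfrak p_K$ Frobenius congruence $\phi_m\equiv\phi_{m-1}^p$, rather than trying to read it off from a single coefficient. Once that induction is set up, the limit statement is a parallel (easier) induction, and assembling everything with the no-collision-of-monomials observation finishes the proof. \CVD
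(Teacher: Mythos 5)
Your overall route is the paper's: reduce membership of $g^-$ in $\mathcal{E}_K$ to the condition $\phi_{-n,m}\to 0$ as $np^m\to\infty$ (boundedness being automatic from $\lb_{-n}\in\W(\O_K)$, and the no-collision remark being correct), then translate between phantom and Witt coordinates. The ``if'' direction is essentially the paper's argument, up to one imprecision: in the regime where $n$ stays bounded and $m\to\infty$ you must keep the exponents and use that the head is bounded by $\max_{j<m_0}|\lambda_{-n,j}|^{p^{m-j}}$, which tends to $0$ because $|\lambda_{-n,j}|<1$ \emph{strictly}; the bound $\max_j|\lambda_{-n,j}|$ you actually wrote is not small there. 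That is easily repaired. The genuine gap is in the step you yourself flag as delicate: the strict inequality $|\lambda_{-n,m}|<1$ in the ``only if'' direction. You propose to deduce it from ``every $|\phi_{-n,j}|<1$'' together with integrality congruences among the phantom components, reducing mod $\mathfrak{p}_K$ and inducting. These inputs are strictly too weak: the Witt vector $(0,1,0,0,\ldots)\in\W(\mathbb{Z}_p)$ has phantom vector $\ph{0,p,p,\ldots}$, so all its phantom components have norm $|p|<1$, and, being a genuine integral Witt vector, it satisfies every congruence that integral Witt vectors satisfy; yet $|\lambda_1|=1$. (Your illustrative example $\ph{1,1,\ldots}$ does not exhibit the obstruction, since there $|\phi_0|=1$ already.) Reducing $\phi_{-n,m}=\lambda_{-n,0}^{p^m}+p(\cdots)$ modulo $\mathfrak{p}_K$ only controls $\lambda_{-n,0}$; to reach $\lambda_{-n,1},\lambda_{-n,2},\ldots$ one must successively divide by $p,p^2,\ldots$, and for that one needs $|\phi_{-n,m}|$ to eventually drop below $|p|,|p^2|,\ldots$ — that is, one needs the full limit $\lim_{m\to\infty}\phi_{-n,m}=0$ \emph{for each fixed} $n$. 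This limit is available to you (for fixed $n$, $np^m\to\infty$ as $m\to\infty$), but your argument never invokes it at this point.

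The paper isolates exactly this in Lemma \ref{phi_j to 0 Leftrightarrow lambda_j<1}: if $|\lambda_{-n,0}|=1$ then $|\phi_{-n,m}|=|\lambda_{-n,0}^{p^m}+p(\cdots)|=1$ for all $m$, contradicting $\phi_{-n,m}\to 0$; hence $|\lambda_{-n,0}|<1$, one subtracts $(\lambda_{-n,0},0,\ldots)$ (whose phantom components also tend to $0$), divides the remaining phantom components by $p$, and iterates. Substituting this induction on the $m$-limit for your congruence argument closes the gap; the remainder of your proof (the decomposition of the index set, the ``if'' direction, and the $n\to\infty$ induction giving $\lim_{n}\lambda_{-n,m}=0$) then agrees with the paper's.
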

We need the following lemma:
\begin{lemma}\label{phi_j to 0 Leftrightarrow lambda_j<1}
Let $\lb=(\lambda_{0},\lambda_{1},\ldots)\in\W(\O_K)$ be a Witt
vector, and let $\ph{\phi_0,\phi_1,\ldots}\in\O_K^{\mathbb{N}}$ be
its phantom vector. Then $\phi_j\to 0$ in $\O_K$ if and only if
$|\lambda_j|<1$, for all $j\geq 0$.
\end{lemma}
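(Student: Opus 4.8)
The plan is to prove both implications of the equivalence $\phi_j\to 0 \iff |\lambda_j|<1$ for all $j$, using the recursive relation between a Witt vector and its phantom vector. Recall that $\phi_j = \lambda_0^{p^j} + p\lambda_1^{p^{j-1}} + \cdots + p^j\lambda_j$, so that for $j\geq 1$ one has the recursion $p^j\lambda_j = \phi_j - (\lambda_0^{p^j} + p\lambda_1^{p^{j-1}} + \cdots + p^{j-1}\lambda_{j-1}^p)$. Since all $\lambda_i\in\O_K$, the phantom components automatically lie in $\O_K$, so the statement is well-posed.

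For the implication ($\Leftarrow$): assume $|\lambda_j|<1$ for all $j\geq 0$. I would show directly from the definition $\phi_j = \sum_{i=0}^{j} p^i\lambda_i^{p^{j-i}}$ that $\phi_j\to 0$. The term $\lambda_0^{p^j}$ tends to $0$ because $|\lambda_0|<1$ and $p^j\to\infty$; more generally $|p^i\lambda_i^{p^{j-i}}| = |p|^i|\lambda_i|^{p^{j-i}}$, and for fixed $i$ this tends to $0$ as $j\to\infty$ since $|\lambda_i|<1$. One must be slightly careful because the number of terms in the sum grows with $j$, so I would split: for a given $\eps>0$, choose $N$ with $|p|^N<\eps$; then for $i\geq N$ all terms $p^i\lambda_i^{p^{j-i}}$ have norm $<\eps$ regardless of $j$, while for the finitely many $i<N$ each term tends to $0$, hence is $<\eps$ for $j$ large enough. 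This gives $|\phi_j|<\eps$ for $j$ large, i.e. $\phi_j\to 0$.

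For the implication ($\Rightarrow$): assume $\phi_j\to 0$. I argue by induction on $j$ that $|\lambda_j|<1$. For $j=0$: $\phi_0=\lambda_0$, and since $\phi_0$ is (a term of) a sequence tending to $0$... actually this needs care, since $\phi_0\to 0$ is a statement about the \emph{whole} sequence, not about the single term $\phi_0$. The right statement is: $\phi_j\to 0$ forces $|\lambda_j|<1$ for all $j$. So I would instead argue by contradiction using reduction mod $\mathfrak{p}_K$: suppose $d$ is the smallest index with $|\lambda_d|=1$. Reducing the recursion modulo $\mathfrak{p}_K$, and using that $|\lambda_i|<1$ for $i<d$, one sees that $\overline{\phi_d}=\overline{\lambda_d}^{\,p^d}\cdot$ (wait — more simply) the reduction of $\phi_j$ for $j\geq d$: since $|p|<1$ kills all terms $p^i(\cdots)$ with $i\geq 1$ in the \emph{valuation} sense only if... here one must track that $|p^i\lambda_i^{p^{j-i}}|\leq|p|^i\leq|p|$ for $i\geq 1$, whereas $|\lambda_0^{p^j}|=|\lambda_0|^{p^j}$. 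If $d=0$ then $|\lambda_0|=1$ so $|\phi_j|=|\lambda_0^{p^j}+p(\cdots)|=1$ for all $j$ (the first term has norm $1$, the rest norm $\leq|p|<1$), contradicting $\phi_j\to 0$. If $d\geq 1$, then $|\lambda_0|<1$ so $|\lambda_0^{p^j}|\to 0$, and among the terms $p^i\lambda_i^{p^{j-i}}$ with $1\leq i\leq d-1$ each has norm $\leq|p|^i|\lambda_i|^{p^{j-i}}\to 0$ as $j\to\infty$ (fixed $i$, $|\lambda_i|<1$), while the term $p^d\lambda_d^{p^{j-d}}$ has norm exactly $|p|^d$ for all $j\geq d$, and the terms with $i>d$ have norm $\leq|p|^{d+1}$. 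Hence for $j$ large, $|\phi_j|=|p|^d\neq 0$, contradicting $\phi_j\to 0$. This establishes $|\lambda_j|<1$ for all $j$.

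The main obstacle is purely bookkeeping: in both directions the subtlety is that the defining sum for $\phi_j$ has a number of terms growing with $j$, so one cannot naively pass limits term-by-term; the fix in each case is to separate the "low-index" terms (finitely many, each individually tending to $0$) from the "high-index" terms (uniformly small because of the $|p|^i$ factor). Once that split is made, both implications are short. I would also note that the same argument, together with the $T^{-1}$-adic reduction technique already used in the proof of Lemma \ref{|a_i|<1}, is exactly what is needed for Proposition \ref{criteria for belong to Amice}: condition $|\lambda_{-n,m}|<1$ for all $m$ is equivalent (by this lemma applied to $\lb_{-n}$) to $\phi_{-n,m}\to 0$ in $m$, which controls the $T^{-1}$-adic decay of $g^-(T)$ for each fixed $n$, while the condition $\lim_n\lambda_{-n,m}=0$ controls uniformity in $n$.
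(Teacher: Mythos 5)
Your proof is correct. Both implications are established soundly: the $(\Leftarrow)$ direction by splitting the phantom sum $\phi_j=\sum_{i=0}^{j}p^i\lambda_i^{p^{j-i}}$ into a uniformly small tail ($i\geq N$, norm $\leq|p|^N$) and finitely many head terms each tending to $0$; the $(\Rightarrow)$ direction by taking $d$ minimal with $|\lambda_d|=1$ and observing that for large $j$ the term $p^d\lambda_d^{p^{j-d}}$ has norm exactly $|p|^d$ while every other term is eventually of strictly smaller norm, so $|\phi_j|=|p|^d\not\to 0$.

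The underlying computation is the same as the paper's, but your packaging is genuinely different and somewhat more elementary. The paper disposes of $(\Leftarrow)$ by asserting that the set $I$ of Witt vectors with phantom components tending to $0$ is ``clearly'' an ideal containing $\W(\mathfrak{p}_K)$ --- your $\varepsilon$--$N$ split is precisely the argument that justifies this, since the number of terms in $\phi_j$ grows with $j$ and one cannot pass to the limit termwise. For $(\Rightarrow)$ the paper runs an induction inside the ring $\W(\O_K)$: having shown $|\lambda_0|<1$, it subtracts the Witt vector $(\lambda_0,0,\ldots)$ (using that $I$ is a subgroup and that $\lb-(\lambda_0,0,\ldots)=(0,\lambda_1,\ldots)$, an identity verified on phantom components) and iterates. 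Your minimal-counterexample argument reaches the same conclusion working only with ultrametric estimates on the phantom sums, with no appeal to the ring structure of $\W(\O_K)$ or to Witt-vector subtraction; what you lose is only the structural remark that $I$ is an ideal, which the paper does not actually need beyond this lemma. The abandoned false start about reduction modulo $\mathfrak{p}_K$ should simply be deleted from a final write-up; the norm argument that replaces it is the one that works.
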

\begin{proof} 
The set of Witt vectors whose phantom components
go to $0$ is clearly an ideal $I\subset\W(\O_K)$ 
containing
$\W(\frak{p}_K)$, where $\frak{p}_K$ is the 
maximal ideal of $\O_K$. 
Reciprocally, suppose $\phi_j\to 0$, since
$\phi_j=\lambda_0^{p^j}+p\lambda_1^{p^{j-1}}+\cdots+p^j\lambda_j$,
we have $|\lambda_0|<1$. Then
$\lb^{(1)}:=(0,\lambda_1,\lambda_2,\ldots)=\lb-(\lambda_0,0,\ldots)$
lies again in the ideal $I$, and hence $\phi_j(\lb^{(1)})=
p\lambda_1^{p^{j-1}}+ p^{2}\lambda_2^{p^{j-2}}+
\cdots+p^j\lambda_j\to 0$. This shows that 
$|\lambda_1|<1$. Proceeding inductively one sees that 
$|\lambda_j|<1$, for all $j\geq 0$. \end{proof}

We now are ready to give the proof of Proposition 
\ref{criteria for belong to Amice} : 
\begin{proof} 
Assume that
$g^-(T)=\sum_{n\in\J}\sum_{m\geq 0}-n\phi_{-n,m}T^{-np^m}$ lies in
$\mathcal{E}_K$. This happens if and only if $\lim_{np^m\to
\infty}\phi_{-n,m}=0$, and implies $\lim_{m\to
\infty}\phi_{-n,m}=0$ for all $n\in\J$. By Lemma 
\ref{phi_j to 0
Leftrightarrow lambda_j<1}, we have $|\lambda_{-n,m}|<1$, for all
$n\in\J$ and all $m\geq 0$. An easy induction shows that
$\lim_{n\in\J,n\to\infty}\lambda_{-n,m}=0$, for all $m\geq 0$.

Reciprocally, assume that $\{\lb_{-n}\}_{n\in\J}$ satisfies the
condition \eqref{lb<1 and lim_n lb_n,m =0}. We must show that
$\lim_{np^m\to\infty}\phi_{-n,m}=0$. For all $\varepsilon>0$, we
choose $k\geq 0$ such that $|p^{k+1}|< \varepsilon$. By
assumption, for all $0\leq m\leq k$, there exists $N_m$ such that
$|\lambda_{-n,m}|< \varepsilon$, for all $n\geq N_m$. Let
$N:=\max(N_0,\ldots,N_k)$. Then
$$\phi_{-n,m}=\underbrace{\lambda_{-n,0}^{p^m}+
\cdots+p^{k}\lambda_{-n,k}^{p^{m-k}}}_{< \varepsilon\;,\textrm{ if
} n\geq N}+
\underbrace{p^{k+1}\lambda_{-n,k+1}^{p^{m-k-1}}+\cdots+
p^m\lambda_{-n,m}}_{<\varepsilon}\;.$$ Hence
$|\phi_{-n,m}|<\varepsilon$, if $n\geq N$. On the other hand, by
assumption, there is $\delta<1$ such that
$|\lambda_{-n,m}|\leq\delta<1$, for all $m=0,\ldots,k$,
$n=0,\ldots,N$. Hence there exists $M$ such that
$|\lambda_{-n,0}^{p^m}|,\ldots,|\lambda_{-n,k}^{p^{m-k}}|<\varepsilon$,
for all $m\geq M$. Then $|\phi_{-n,m}|\leq \varepsilon$, for all
$n\geq N$, $m\geq M$. Hence
$\lim_{np^m\to\infty}\phi_{-n,m}=0$.\end{proof}

\begin{definition}\label{Conv}
We denote by $\mathrm{Conv}(\mathcal{E})$
\index{Conv@$\mathrm{Conv}(\mathcal{E})$}the set of families
$\{\lb_{-n}\}_{n\in\J}$, with
$\lb_{-n}=(\lambda_{-n,0},\lambda_{-n,1},\ldots)\in\W(\O_K)$,
satisfying condition \eqref{lb<1 and lim_n lb_n,m =0}.
\end{definition}

\begin{corollary}[Criterion of solvability]\label{criterion of solv over amice}
The equation $\d-g(T)$,
$g(T)=\sum_{i\in\mathbb{Z}}a_iT^i\in\mathcal{E}_K$, is solvable if
and only if $a_0\in\mathbb{Z}_p$, and there exist two families
$\{\lb_{-n}\}_{n\in\J}$ and $\{\lb_{-n}\}_{n\in\J}$ with
$\lb_{-n},\lb_{n}\in\W(\O_K)$, for all $n\in\J$, such that
$\{\lb_{-n}\}_{n\in\J}\in\mathrm{Conv}(\mathcal{E}_K)$, and
moreover
\begin{equation}
a_{np^m}=n\phi_{n,m}\quad,\quad a_{-np^m}=-n\phi_{-n,m}\;.
\end{equation}
In other words, its formal solution
$T^{a_0}\exp(\sum_{i\leq-1}a_i\frac{T^i}{i}) \exp(\sum_{i\geq
1}a_i\frac{T^i}{i})$ can be represented by the symbol
\begin{equation}\label{formal solution}
 T^{a_0}\cdot\exp(\sum_{n\in\J}\sum_{m\geq
0}\phi_{-n,m}\frac{T^{-np^m}}{p^m})\cdot\exp(\sum_{n\in\J}\sum_{m\geq
0}\phi_{n,m}\frac{T^{np^m}}{p^m})\;,
\end{equation}
where $(\phi_{-n,0},\phi_{-n,1},\ldots)$ (resp.
$(\phi_{n,0},\phi_{n,1},\ldots)$) is the phantom vector of
$\lb_{-n}$ (resp. $\lb_n$), and hence
\begin{equation}\label{matrix of a solvable differential equation}
\qquad\qquad
g(T)\;=\;\sum_{n\in\J}\sum_{m\geq
0}-n\phi_{-n,m}T^{-np^m}+a_0+\sum_{n\in\J}\sum_{m\geq
0}n\phi_{n,m}T^{np^m}\;.\qquad\qquad\Box
\end{equation}
\end{corollary}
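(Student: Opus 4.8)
The plan is to obtain the corollary as a synthesis of the structural results established above, treating the two implications separately; essentially no new computation is needed.

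\emph{Necessity.} Assume $\d-g(T)$ is solvable at $\rho=1$. By Proposition \ref{division of the problem over Amice}, each of $\d-g^-(T)$, $\d-a_0$ and $\d-g^+(T)$ is solvable at $1$, and by Corollary \ref{g^+ is trivial} (equivalently, by the antecedent-by-Frobenius iteration in Steps 5--6 of the proof of Proposition \ref{division of the problem over Amice}, which repeatedly writes $pb_0=a_0+n$ with $n\in\mathbb{Z}$ and $|b_0|\le1$ by the small radius lemma \ref{small radius2}) we get $a_0\in\mathbb{Z}_p$. Applying Lemma \ref{criteria of solvability lemma2} to $\d-g^+(T)$, which is legitimate since $g^+\in\a([0,1[)$, produces a family $\{\lb_n\}_{n\in\J}\subset\W(\O_K)$ with $a_{np^m}=n\phi_{n,m}$. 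For $\d-g^-(T)$ we argue as in Remark \ref{discussion}: the coordinate change $T\mapsto T^{-1}$, which sends $\d$ to $-\d$ and preserves $Ray(\wc,1)$, turns it into an operator on $]1,\infty]$ represented by a series in $\a([0,1[)$ in the variable $T^{-1}$, so Lemma \ref{criteria of solvability lemma2} applies again and gives $\{\lb_{-n}\}_{n\in\J}\subset\W(\O_K)$ with $a_{-np^m}=-n\phi_{-n,m}$. Finally $g^-(T)$, being a summand of $g(T)\in\mathcal{E}_K$, itself lies in $\mathcal{E}_K$, so Proposition \ref{criteria for belong to Amice} says precisely that $\{\lb_{-n}\}_{n\in\J}\in\mathrm{Conv}(\mathcal{E}_K)$.

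\emph{Sufficiency.} Conversely, suppose $a_0\in\mathbb{Z}_p$ and that $\{\lb_n\}_{n\in\J},\{\lb_{-n}\}_{n\in\J}\subset\W(\O_K)$ are given with $\{\lb_{-n}\}_{n\in\J}\in\mathrm{Conv}(\mathcal{E}_K)$ and satisfying the stated phantom relations. Set $g^+(T):=\sum_{n\in\J}\sum_{m\ge0}n\phi_{n,m}T^{np^m}$ and $g^-(T):=\sum_{n\in\J}\sum_{m\ge0}-n\phi_{-n,m}T^{-np^m}$. Since $|n|\le1$ and $|\phi_{n,m}|\le1$, we have $g^+\in T\O_K[[T]]\subset\a([0,1[)$, so $\d-g^+(T)$ is solvable by Lemma \ref{criteria of solvability lemma2}; the $\mathrm{Conv}(\mathcal{E}_K)$ hypothesis together with Proposition \ref{criteria for belong to Amice} gives $g^-(T)\in\mathcal{E}_K$, and the same lemma read on $]1,\infty]$ shows $\d-g^-(T)$ is solvable. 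Also $\d-a_0$ is solvable because $a_0\in\mathbb{Z}_p$. Hence $g(T)=g^-(T)+a_0+g^+(T)\in\mathcal{E}_K$, and $\d-g(T)$ is the tensor product of these three solvable rank one modules, so by point iii) of Remark \ref{remark : properties}, $Ray(\d-g(T),1)\ge\min(1,1,1)=1$; since $Ray(\d-g(T),1)\le1$ always, equality holds and $\d-g(T)$ is solvable. The symbol \eqref{formal solution} for the formal solution and the formula \eqref{matrix of a solvable differential equation} are then read off directly from this decomposition.

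\emph{Main obstacle.} There is no genuinely new difficulty: the corollary merely packages Proposition \ref{division of the problem over Amice}, Lemma \ref{criteria of solvability lemma2}, Remark \ref{discussion} and Proposition \ref{criteria for belong to Amice}. The one point needing care is that Lemma \ref{criteria of solvability lemma2} is stated for series converging in the open disk centered at $0$, whereas $g^-$ lives near $\infty$; one must pass through the substitution $T\mapsto T^{-1}$ as in Remark \ref{discussion} to invoke it there, and keep the opposite sign conventions $a_{np^m}=n\phi_{n,m}$ and $a_{-np^m}=-n\phi_{-n,m}$ coherent between the two implications.
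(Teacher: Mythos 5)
Your proposal is correct and follows exactly the route the paper intends: the corollary is left as an immediate consequence (the paper's ``proof'' is just the closing $\Box$), with the necessity direction being precisely the content of Remark \ref{discussion} combined with Proposition \ref{division of the problem over Amice}, Corollary \ref{g^+ is trivial}, Lemma \ref{criteria of solvability lemma2} and Proposition \ref{criteria for belong to Amice}, and the sufficiency direction following from the tensor-product inequality of Remark \ref{remark : properties}. You have merely written out in full what the paper leaves implicit, including the one genuinely delicate point (transporting Lemma \ref{criteria of solvability lemma2} to the disk at infinity via $T\mapsto T^{-1}$), so there is nothing to add.
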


\begin{corollary}[Katz's canonical extension]
\label{canonical ext over amice}
Let
$\d\textrm{-}\mathrm{Mod}(\a_K([1,\infty]))^{\mathrm{sol}}_{rk=1}$
be the category of rank one differential modules over
$\a_K([1,\infty])$, solvable at all $\rho\geq 1$, with a regular
singularity at $\infty$ (i.e. the matrix of $\d$ converge at
$\infty$ and hence belongs to $\a_K([1,\infty])$). The scalar
extension functor
$$\d\textrm{-}\mathrm{Mod}(\a_K([1,\infty]))^{\mathrm{sol}}_{rk=1} \longrightarrow
\d\textrm{-}\mathrm{Mod}(\mathcal{E}_K)^{\mathrm{sol}}_{rk=1}$$ is
an equivalence.
\end{corollary}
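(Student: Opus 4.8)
The plan is to check that the scalar-extension functor $F$ is faithful, essentially surjective and full. Faithfulness is immediate: a morphism between rank one differential modules $\d-g_1$ and $\d-g_2$ over any of these rings is multiplication by an element $x$ of the base ring satisfying $\d x=(g_2-g_1)x$, and since $\a_K([1,\infty])\hookrightarrow\mathcal{E}_K$ is injective the induced map on $\Hom$-groups is injective. For essential surjectivity I would invoke the (first) Katz canonical-extension corollary of this section: a solvable rank one $M$ over $\mathcal{E}_K$ admits a basis in which it is $\d-(a_0+g^-(T))$, and since $g^-$ has only negative powers with $|a_i|\to 0$ the matrix $a_0+g^-$ already lies in $\a_K([1,\infty])$. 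Thus $M$ is the scalar extension of $\M':=(\a_K([1,\infty]),\,\d-(a_0+g^-))$, whose matrix converges at $\infty$. It remains to see $\M'$ is solvable at every $\rho\ge 1$: it is solvable at $\rho=1$ by hypothesis, $\d-g^-$ has a convergent Taylor solution at $\infty$ so $Ray(\d-g^-,\rho)=\rho$ for $\rho$ large, and $\d-a_0$ is solvable at all $\rho$ because $a_0\in\mathbb{Z}_p$ (Corollary \ref{criterion of solv over amice}); writing $\M'=(\d-g^-)\otimes(\d-a_0)$ and combining the concavity and log-affinity of $\rho\mapsto Ray(\cdot,\rho)$ (Remark \ref{remark : properties}) with the inequality $Ray\le\rho$ forces $Ray(\M',\rho)=\rho$ on all of $[1,\infty]$.

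\emph{Fullness.} For $M_1,M_2$ in the source category, $\Hom(M_1,M_2)$ over either ring is the space of horizontal sections of $N:=M_1^\vee\otimes M_2$, which is again rank one, solvable, regular singular at $\infty$; by the canonical-extension corollary write $N=\d-g$ with $g=a_0+g^-$. First I would trivialize the $g^-$-part over $\a_K([1,\infty])$. By Lemma \ref{criteria of solvability lemma2} and Proposition \ref{criteria for belong to Amice} (in the coordinate $T^{-1}$, i.e. in the form used in Remark \ref{discussion} and Corollary \ref{criterion of solv over amice}), the Taylor solution at $\infty$ of $\d-g^-$ is the Artin--Hasse exponential $E^-=E(\sum_{n\in\J}\lb_{-n}T^{-n},1)$ with $\{\lb_{-n}\}\in\mathrm{Conv}(\mathcal{E}_K)$; by Artin--Hasse integrality it lies in $1+T^{-1}\O_K[[T^{-1}]]$, the condition $\lim_{n}\lb_{-n}=0$ puts it in $\a_K([1,\infty])$, and its inverse $E(-\sum_{n\in\J}\lb_{-n}T^{-n},1)$ has the same shape, so $E^-\in\a_K([1,\infty])^\times$. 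Hence $N\cong\d-a_0$ over $\a_K([1,\infty])$, compatibly with $-\otimes\mathcal{E}_K$, and we are reduced to $N=\d-a_0$. A coefficientwise computation then shows that $x=\sum x_iT^i\in\mathcal{E}_K$ with $\d x=a_0x$ satisfies $(i-a_0)x_i=0$ for all $i$; so the horizontal sections of $\d-a_0$ form $0$ unless the exponent at $\infty$ is integral, in which case the space is $K\cdot T^{a_0}$ with $T^{a_0}\in\a_K([1,\infty])$ by the normalization of the exponent built into ``regular singularity at $\infty$''. Thus $\Hom$ over $\mathcal{E}_K$ and over $\a_K([1,\infty])$ agree, and $F$ is full.

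I expect the genuinely delicate point to be this \emph{fullness} step, namely identifying the horizontal sections of $N$ over $\mathcal{E}_K$ with those over $\a_K([1,\infty])$. The reduction $N\cong\d-a_0$ hinges on knowing $E^-$ is an honest unit of $\a_K([1,\infty])$ --- this is where the explicit Artin--Hasse description (or, equivalently, the transfer theorem applied carefully at the boundary radius $\rho=1$, giving convergence on the \emph{closed} region $\{|T|\ge 1\}$ rather than only $\{|T|>1\}$) is indispensable --- and one must also be careful that the integral-exponent case of $\d-a_0$ is consistent with the convention chosen for ``regular singularity at $\infty$''. Faithfulness and essential surjectivity, by contrast, should be routine given the results already established.
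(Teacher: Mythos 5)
Your overall strategy coincides with the paper's: essential surjectivity via the canonical form $\d-(a_0+g^-)$ of Corollary \ref{criterion of solv over amice} (whose matrix already lies in $\a_K([1,\infty])$), and full faithfulness by identifying $\Hom(M_1,M_2)$ with the horizontal sections of the rank one module $M_1^\vee\otimes M_2$ over each of the two rings. Your verification of faithfulness and of solvability of the descended module at every $\rho\geq 1$ (concavity of $r\mapsto\log(Ray/\rho)$ pinched between its values at $r=0$ and at large $r$) fills in details the paper leaves implicit and is fine. The problem is the fullness step, where you depart from the paper's route. You trivialize the $g^-$-part by asserting that $E^-=E(\sum_{n\in\J}\lb_{-n}T^{-n},1)$ is a unit of $\a_K([1,\infty])$ because $\{\lb_{-n}\}_{n\in\J}\in\mathrm{Conv}(\mathcal{E})$. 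Artin--Hasse integrality does place $E^-$ in $1+T^{-1}\O_K[[T^{-1}]]$, but membership in $\a_K([1,\infty])$ further requires its coefficients to tend to $0$, and the condition $\lim_{n}\lambda_{-n,m}=0$ holds only for each fixed $m$, with no uniformity in $m$: the coefficient of $T^{-np^m}$ in the factor $E(\lb_{-n}T^{-n},1)$ is governed by $\lambda_{-n,m}$ along the diagonal $m\sim n$, which $\mathrm{Conv}(\mathcal{E})$ does not control. Remark \ref{missing morphism} states explicitly that no criterion is known for $E(\sum_{n\in\J}\lb_{-n}T^{-n},1)$ to belong to $\mathcal{E}_K$ (equivalently, for a series in non-positive powers with integral coefficients, to $\a_K([1,\infty])$), so this step is a genuine gap. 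The paper's own argument sidesteps it: the formal solution space of $\d-(g_{M_2}-g_{M_1})$ is one-dimensional, spanned by the symbol $T^{a_0}E^-$, so one only has to compare the two conditions ``$T^{a_0}E^-\in\a_K([1,\infty])$'' and ``$T^{a_0}E^-\in\mathcal{E}_K$''; for a Laurent series supported in non-positive powers with coefficients in $\O_K$ these conditions coincide, and one never needs to decide whether $E^-$ actually lies in either ring.

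A second, smaller point: you rightly flag the integral-exponent case as delicate, but your resolution --- ``$T^{a_0}\in\a_K([1,\infty])$ by the normalization of the exponent built into regular singularity at $\infty$'' --- appeals to a normalization the statement does not impose. When $a_0=a_0(M_2)-a_0(M_1)$ is a positive integer, $T^{a_0}$ lies in $\mathcal{E}_K$ but not in $\a_K([1,\infty])$, whose elements carry no positive powers of $T$; so without normalizing exponents (say, to fixed representatives of $\mathbb{Z}_p/\mathbb{Z}$) fullness fails for such pairs. The paper's proof is equally silent here, so you have at least located the thin spot correctly, but the justification you offer for it is not available from the definitions as given.
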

\begin{proof} 
Corollary \ref{criterion of solv over amice} shows that
gives a correspondence between the objects. Indeed, all differential
equations $\d-g(T)$ over $g(T)=g^-(T)+a_0+g^+(T)\in\mathcal{E}_K$
is isomorphic over $\mathcal{E}_K$ to the equation
$\d-(g^-(T)+a_0)$. On the other hand, let
$M,N\in\d-Mod(\a_K([1,\infty]))^{\mathrm{sol}}_{rk=1}$, and let
$\d-g_M$, $\d-g_N$ be the operators corresponding to a chosen
basis of $M$ and $N$. An element of $\Hom(M,N)\stackrel{\sim}{\to}
M^{\vee}\otimes N$ is then a solution of the operator
$\d-(g_N-g_M)$. This solution will be of the form
$y(T)=T^{a_0}\exp(\sum_{n\in\J}\sum_{m\geq
0}\phi_{-n,m}T^{-np^m}/p^m)$, for some $\phi$. Since we are
supposing that this solution belongs to $\a_K([1,\infty])$, then
$a_0\in\mathbb{Z}$ and this exponential lies in
$\a_K([1,\infty])$. Since the same argument works for
$\Hom_{\d}(M\otimes\mathcal{E}_K,N\otimes\mathcal{E}_K)$, and
since $\a_K([1,\infty])\subset\mathcal{E}_K$, then the map
$\Hom_{\d}(M,N)\to\Hom_{\d}(M\otimes\mathcal{E}_K,N\otimes\mathcal{E}_K)$
is bijective. 
\end{proof}

\begin{remark}\label{missing morphism}
We are not able to obtain a complete description of the
isomorphism class of a given differential equation over
$\mathcal{E}_K$. Namely, over the Robba ring 
$\mathfrak{R}_K$, we know that a solution of a
differential equation lies in $\mathfrak{R}_K$ if and only 
if the
corresponding Witt vector 
(in a convenient basis of $M$) satisfies a certain property 
\cite[Theorem 3.1]{Rk1}. But we do not have the 
analogous
result over $\mathcal{E}_K$. In other words, we do not 
have a
necessary and sufficient condition on the Witt vector
$\sum_{n\in\J}\lb_{-n}T^{-n}$ in order that
$E(\sum_{n\in\J}\lb_{-n}T^{-n},1)$ belongs to $\mathcal{E}_K$.
\end{remark}

\section*{\textsc{Second part : 
solvability of rank one $q$-difference equations over 
$\mathcal{E}_K$}}
\addcontentsline{toc}{section}{\textsc{Second part : 
solvability of rank one $q$-difference equations over 
$\mathcal{E}_K$}}

We shall establish the $q-$analogue of the results of section
\ref{section crit of solv}. In order to do that, we will need some
numerical lemmas (cf. section \ref{numerical lemmas}) and a
generalization of the result of E.Motzkin (cf. \cite{Motz}, and
section \ref{The motzkin decomposition} below).
As a consequence we will prove that for 
$|q-1|<\omega$ we have an equivalence of 
$q$-confluence as in \cite{Pu-q-Diff}.
 
We shall point out that, almost all statements are true for
$|q-1|<1$. The only obstructions to obtain the 
confluence in the case $\omega\leq|q-1|<1$ are 
\begin{enumerate}
\item the
existence of the ``antecedent by Frobenius'' (used in 
``Step $5$'' of Proposition 
\ref{q-division of the problem over Amice}), which 
is proved in
\cite{DV-Dwork} only for $|q-1|<\omega$;
\item the ``\emph{Step }$0$'' of Theorem 
\ref{q-criteria of solvability lemma}. 
\end{enumerate}
Namely, the existence of an antecedent by Frobenius 
holds with $|q-1|<1$ over the Robba Ring, but 
the proof uses the Confluence \cite{Pu-q-Diff}. It is 
reasonable to conjecture that a more direct proof is 
possible generalizing \cite{DV-Dwork} to the case 
$|q-1|<1$. The author hopes
that these difficulties will be overcoming in future. 

For these reasons the hypothesis $|q-1|<\omega$ will be
introduced systematically starting from 
\ref{hypothesis q-1<omega}
on. Before Hypothesis \ref{hypothesis q-1<omega}
 we will suppose 
that $|q-1|<1$.

\section{Some numerical Lemmas}\label{numerical lemmas}
\begin{lemma}\label{coefficients of logarithm}
Let us fix an integer $j\geq 0$. If $j\geq 1$ we assume 
$\omega^{1/p^{j-1}}<\rho<\omega^{1/p^{j}}$, and 
if $j=0$ we assume $\rho<\omega$. Then
\begin{equation}\label{eq : condition j t}
\frac{\rho^{p^j}}{|p^j|}\;>\; 
\sup(\rho^{r}/|r|\;:\; r\geq 1,r\neq p^j) \;.
\end{equation}
Moreover, we have
\begin{equation}
\rho<\frac{\rho^p}{|p|}<\cdots<\frac{\rho^{p^{j-1}}}{|p^{j-1}|}<\frac{\rho^{p^j}}{|p^j|}\quad;\quad
\frac{\rho^{p^j}}{|p^j|}>\frac{\rho^{p^{j+1}}}{|p^{j+1}|}>\frac{\rho^{p^{j+2}}}{|p^{j+2}|}>\cdots\;.
\end{equation}
\end{lemma}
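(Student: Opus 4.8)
The plan is to reduce the whole statement to the behaviour of the single sequence $A_s:=\rho^{p^s}/|p^s|$, $s\geq 0$, and then to dominate every other term $\rho^r/|r|$ by these.

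First I would record the elementary reduction to powers of $p$. Writing an arbitrary integer $r\geq 1$ as $r=p^s u$ with $(u,p)=1$ and $u\geq 1$, one has $|r|=|p^s|$, so $\rho^r/|r|=\rho^{p^s u}/|p^s|$; since $0<\rho<1$ this is $\leq A_s$, with equality exactly when $u=1$, i.e.\ $r=p^s$. If moreover $r$ is not a power of $p$ then $u\geq 2$, whence
\[\rho^r/|r|\;\leq\;\rho^{2p^s}/|p^s|\;=\;\rho^{p^s}A_s\;\leq\;\rho\,A_s\]
(using $p^s\geq 1$). Granting that the sequence $(A_s)_{s\geq 0}$ attains a \emph{strict} global maximum at $s=j$ and is strictly monotone on either side of $j$, the proposition follows at once: for $r=p^s$ with $s\neq j$ one gets $\rho^r/|r|=A_s\leq\max(A_{j-1},A_{j+1})<A_j$ (with $A_{j-1}$ omitted, so the bound is $A_1$, when $j=0$), and for $r$ not a power of $p$ one gets $\rho^r/|r|\leq\rho A_{v_p(r)}\leq\rho A_j<A_j$; taking the supremum over $r\neq p^j$ yields \eqref{eq : condition j t}, while the two displayed chains of inequalities are literally the monotonicity of $(A_s)$ for $0\leq s\leq j$ and for $s\geq j$.

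The core of the argument is thus the behaviour of $(A_s)$, which I would extract from the ratio of consecutive terms. Using $|p|=\omega^{p-1}$,
\[\frac{A_{s+1}}{A_s}\;=\;\frac{\rho^{\,p^{s+1}-p^s}\,|p^s|}{|p^{s+1}|}\;=\;\frac{\rho^{(p-1)p^s}}{|p|}\;=\;\Bigl(\frac{\rho^{p^s}}{\omega}\Bigr)^{p-1}.\]
Hence $A_{s+1}>A_s$ if and only if $\rho^{p^s}>\omega$, i.e.\ $\rho>\omega^{1/p^s}$, and $A_{s+1}<A_s$ if and only if $\rho<\omega^{1/p^s}$ (equality being excluded by the hypotheses on $\rho$). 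Since $\omega<1$, the sequence $s\mapsto\omega^{1/p^s}$ is strictly increasing. When $j\geq 1$, the hypothesis $\omega^{1/p^{j-1}}<\rho<\omega^{1/p^j}$ then gives $\omega^{1/p^s}\leq\omega^{1/p^{j-1}}<\rho$ for $0\leq s\leq j-1$ and $\omega^{1/p^s}\geq\omega^{1/p^j}>\rho$ for $s\geq j$, i.e.\ $A_0<A_1<\cdots<A_j$ and $A_j>A_{j+1}>\cdots$; when $j=0$, the hypothesis $\rho<\omega\leq\omega^{1/p^s}$ for every $s\geq 0$ gives $A_0>A_1>\cdots$ (the first chain being vacuous). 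In either case $A_j$ is the strict maximum, which is what was used above.

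I do not expect a real obstacle here; everything is a finite verification. The one point deserving a little care is ensuring the \emph{strict} inequality in \eqref{eq : condition j t}: one should keep the uniform bounds $\rho A_j$ (for the $r$ that are not powers of $p$) and $\max(A_{j-1},A_{j+1})<A_j$ (for the remaining powers $p^s$, $s\neq j$) in hand, rather than only the pointwise strict inequalities $\rho^r/|r|<A_j$, so that passing to the supremum preserves strictness. The remainder is the bookkeeping of translating between "$\rho^{p^s}>\omega$" and "$\rho>\omega^{1/p^s}$" and matching it against the prescribed range of $\rho$ in the two cases $j=0$ and $j\geq 1$.
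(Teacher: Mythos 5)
Your proof is correct and follows essentially the same route as the paper's: reduce an arbitrary $r$ to the power $p^{v_p(r)}$ using $\rho<1$, then determine the monotonicity of $s\mapsto\rho^{p^s}/|p^s|$ from the ratio of consecutive terms, which is governed by whether $\rho^{p^s}$ exceeds $\omega$. Your extra care about uniform bounds ensuring strictness of the supremum is a welcome refinement of a point the paper leaves implicit, but it is not a different argument.
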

%
%
%
\begin{proof}
If $r\neq p^k$, for all $k\geq 0$, then $|r|=|p|^{v}$, 
with $v:=v_p(r)$, hence 
$\rho^{r}/|r| < \rho^{p^{v}}/|p|^{v}$. 
This proves \eqref{eq : condition j t}. 
Now the condition $\rho^{p^{k-1}}/|p^{k-1}|<\rho^{p^k}/|p^k|$ is
equivalent to $\rho_1<\frac{\rho_1^p}{|p|}$, where
$\rho_1:=\rho^{p^{k-1}}$, and it 
is verified if and only if
$\rho_1>\omega$, that is $\rho>\omega^{\frac{1}{p^{k-1}}}$. 
On the other hand,
the inequality $\rho^{p^{k-1}}/|p^{k-1}|>\rho^{p^k}/|p^k|$ is
equivalent to $\rho<\omega^{\frac{1}{p^{k}}}$.
\end{proof}

\begin{lemma}\label{coefficients of exponential}
Let $n\geq 1$ be a natural number. Let $l(n):=[\log_p(n)]$, where
$[x]$ denotes the greatest integer smaller than or equal to the real
number $x$. Then for all $k\geq n$ we have
\begin{equation}
\left|\frac{k!}{n!}\right|^{\frac{1}{k-n}}\;\geq\;|p|^{l(n)+1}\;.
\end{equation}
In particular, if $c\leq |p|^{l(n)+1}$, 
then for all $k\geq n$ we have
\begin{equation}
\label{rho^n/n! geq rho^k/k! for all k geq n}
\frac{c^n}{|n!|}\;\geq\;
\frac{c^k}{|k!|}\;.
\end{equation}
\end{lemma}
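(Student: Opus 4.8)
The plan is to translate the statement into $p$-adic valuations and then into an inequality about sums of base-$p$ digits, where it becomes essentially immediate.

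First I would set $m := k-n$ and dispose of the trivial case $m=0$. For $m\ge 1$, since $k!/n!$ is a positive integer whose prime-to-$p$ part is a unit in $\O_K$ and $|p|<1$, one has $|k!/n!| = |p|^{\,v_p(k!) - v_p(n!)}$, hence $|k!/n!|^{1/m} = |p|^{\,(v_p(k!)-v_p(n!))/m}$. Since $|p|<1$, the asserted inequality $|k!/n!|^{1/m} \ge |p|^{\,l(n)+1}$ is therefore equivalent to
\[ v_p(k!) - v_p(n!) \;\le\; (l(n)+1)\,m . \]

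Next I would invoke Legendre's formula $v_p(N!) = \dfrac{N - s_p(N)}{p-1}$, where $s_p(N)$ is the sum of the base-$p$ digits of $N$, which gives
\[ v_p(k!) - v_p(n!) \;=\; \frac{m + s_p(n) - s_p(k)}{p-1}. \]
The estimates $s_p(k)\ge 1$ (because $k\ge 1$) and $s_p(n) \le (p-1)(l(n)+1)$ (because $n < p^{\,l(n)+1}$, so $n$ has at most $l(n)+1$ base-$p$ digits, each at most $p-1$) then yield
\[ v_p(k!) - v_p(n!) \;\le\; \frac{m-1}{p-1} + (l(n)+1). \]
Since $l(n)\ge 0$ and $p\ge 2$, we have $\tfrac{1}{p-1}\le 1\le l(n)+1$, so $\tfrac{m-1}{p-1}\le (l(n)+1)(m-1)$; adding $(l(n)+1)$ gives exactly $v_p(k!) - v_p(n!) \le (l(n)+1)m$, proving the first inequality. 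For the ``in particular'' clause, given $0<c\le |p|^{\,l(n)+1}$, raise $|k!/n!|^{1/m}\ge |p|^{\,l(n)+1}$ to the $m$-th power to obtain $|k!|/|n!| \ge c^{m}=c^{k-n}$, which rearranges to $c^n/|n!| \ge c^k/|k!|$ (the case $k=n$ being trivial).

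The only subtlety — and the main thing to get right — is that one cannot argue termwise in $v_p(k!) - v_p(n!) = \sum_{j=n+1}^{k} v_p(j)$, since a single $v_p(j)$ can be arbitrarily large relative to $l(n)+1$. The cancellation is genuine and is precisely what the digit-sum identity records: whenever the factorial acquires a large power of $p$, this is offset by a correspondingly large digit sum $s_p(k)$, which is why the crude bounds above suffice.
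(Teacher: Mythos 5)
Your proof is correct and is essentially the paper's own argument: the paper also reduces to Legendre's formula (in the form $|n!|=\omega^{\,n-S_n}$ with $\omega=|p|^{1/(p-1)}$) and uses exactly the two digit-sum bounds $S_k\ge 1$ and $S_n\le (p-1)(l(n)+1)$, merely phrased multiplicatively in powers of $\omega$ rather than additively in valuations. No gaps.
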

\begin{proof} 
If $k=n$, the relation is trivial; suppose $k>n$. The
equation \eqref{rho^n/n! geq rho^k/k! for all k geq n} is equivalent
to $c\leq|\frac{k!}{n!}|^{\frac{1}{k-n}}$. Since
$|n!|=\omega^{n-S_n}$, where $S_n$ is the sum of the 
digits of the base $p$ expansion of $n$, then
$|\frac{k!}{n!}|^{\frac{1}{k-n}}=\omega^{1+\frac{S_n-S_k}{k-n}}$.
If $n=n_0+n_1p+n_2p^2+\cdots+n_{l(n)}p^{l(n)}$, with $0\leq n_i\leq p-1$,
then $S_n=n_0+n_1+\cdots +n_{l(n)}$, hence $1\leq S_n\leq (p-1)(l(n)+1)$.
This shows that
\begin{equation}
1+\frac{S_n-S_k}{k-n}\leq 1+\frac{(p-1)(l(n)+1)-1}{k-n}\leq 1+
(p-1)(l(n)+1) -1 =(p-1)(l(n)+1)\;.
\end{equation}
Hence $|\frac{k!}{n!}|^{\frac{1}{k-n}}\geq
\omega^{(p-1)(l(n)+1)}=|p|^{l(n)+1}$, for all $k>n$. 
\end{proof}

\begin{definition}\label{Def : q^a}
Let $q\in K$ be such that $|q-1|<1$. 
For all complete 
valued field extension $\Omega/K$, and all 
$\alpha\in\Omega$ we define
\begin{equation}
q^\alpha:=((q-1)+1)^\alpha:=
\sum_{k\geq
0}\binom{\alpha}{k}(q-1)^k\;,
\end{equation}
where $\tbinom{\alpha}{k}:=\frac{\alpha(\alpha-1)(\alpha-2)\cdots(\alpha-k+1)}{k!}$.
\end{definition}
If $|\alpha|>1$, then
$|\tbinom{\alpha}{k}|=\frac{|\alpha|^k}{|k!|}$, hence $q^\alpha$
converges exactly for $|q-1|<\omega/|\alpha|$. 

If $|\alpha|\leq 1$, then $q^\alpha$ converges at least for
$|q-1|<\omega$, in particular if 
$\alpha\in\mathbb{Z}_p$, then $q^{\alpha}$ converges 
at least for $|q-1|<1$. For a detailed
discussion on the radius of convergence of $q^\alpha$ 
see \cite[Ch.IV, Prop.7.3]{DGS}.

\begin{lemma}\label{(q^a-1)/(q-1) --> a}
Let $\alpha\in\Omega$ and $q\in K$ be as in Definition 
\ref{Def : q^a}. Then
\begin{equation}
\lim_{q\to 1}\frac{q^\alpha-1}{q-1}=\alpha\;.
\end{equation}
\end{lemma}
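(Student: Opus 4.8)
The plan is to prove $\lim_{q\to 1}\frac{q^\alpha-1}{q-1}=\alpha$ directly from the binomial expansion in Definition \ref{Def : q^a}. Writing $q^\alpha-1=\sum_{k\geq 1}\binom{\alpha}{k}(q-1)^k$, we factor out $(q-1)$ to obtain
\begin{equation}
\frac{q^\alpha-1}{q-1}\;=\;\sum_{k\geq 1}\binom{\alpha}{k}(q-1)^{k-1}\;=\;\binom{\alpha}{1}+\sum_{k\geq 2}\binom{\alpha}{k}(q-1)^{k-1}\;=\;\alpha+(q-1)\sum_{k\geq 2}\binom{\alpha}{k}(q-1)^{k-2}\;.
\end{equation}
So it suffices to show that the tail series $R(q):=\sum_{k\geq 2}\binom{\alpha}{k}(q-1)^{k-2}$ stays bounded as $q\to 1$ (equivalently, as $|q-1|\to 0$), since then $(q-1)R(q)\to 0$.

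The first step is to fix the relevant range of $q$: we work with $|q-1|$ small, say $|q-1|\leq\eps_0$ for some $\eps_0<\omega$ (if $|\alpha|\leq 1$) or $\eps_0<\omega/|\alpha|$ (if $|\alpha|>1$), which is exactly the region where the series for $q^\alpha$ converges as recalled after Definition \ref{Def : q^a}. For such $q$ the general term satisfies $|\binom{\alpha}{k}(q-1)^{k-2}|=|\binom{\alpha}{k}|\,|q-1|^{k-2}$, and using $|\binom{\alpha}{k}|\leq \max(1,|\alpha|)^k/|k!|$ (from the explicit formula for $\binom{\alpha}{k}$, bounding each factor $|\alpha-j|\leq\max(1,|\alpha|)$) together with $|k!|\geq\omega^k$, one gets $|\binom{\alpha}{k}(q-1)^{k-2}|\leq \big(\max(1,|\alpha|)\,\eps_0/\omega\big)^{k}\cdot\eps_0^{-2}$; choosing $\eps_0$ small enough that $\max(1,|\alpha|)\eps_0/\omega<1$ forces these terms to $0$ as $k\to\infty$ uniformly in $q$ with $|q-1|\leq\eps_0$. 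By the ultrametric inequality, $|R(q)|\leq\sup_{k\geq 2}|\binom{\alpha}{k}(q-1)^{k-2}|\leq C$ for a constant $C$ depending only on $\alpha$ and $\eps_0$, not on $q$.

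The final step is to combine the two displays: $\big|\frac{q^\alpha-1}{q-1}-\alpha\big|=|q-1|\,|R(q)|\leq C\,|q-1|\to 0$ as $q\to 1$, which is the claim. I do not expect any serious obstacle here; the only point requiring a little care is the uniform boundedness of the tail $R(q)$, i.e. making sure the bound on $|R(q)|$ does not blow up as $|q-1|\to 0$ — this is handled by pulling out the factor $\eps_0^{-2}$ once and for all and noting it is harmless since we only care about the limit, and by restricting to a fixed small $\eps_0$ inside the common disk of convergence. Alternatively, one can avoid estimates entirely and simply invoke that $q\mapsto q^\alpha$, viewed as an analytic function of $q-1$ on a disk $|q-1|<\eps_0$, has derivative at $q=1$ equal to $\alpha$ by term-by-term differentiation of the binomial series, which is precisely the assertion; I would present the elementary estimate version as the main argument since it is self-contained.
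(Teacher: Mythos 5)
Your proof is correct and follows essentially the same route as the paper's: expand $q^\alpha$ by the binomial series, isolate the $k=1$ term $\alpha$, and kill the tail by an ultrametric estimate on $|\tbinom{\alpha}{k}|$. The only (harmless) difference is the factorial bound — you use the crude $|k!|\geq\omega^{k}$ to get a uniform bound on the tail divided by $(q-1)$, whereas the paper invokes its Lemma on $|k!/n!|^{1/(k-n)}$ to dominate the whole tail by its first term $\tbinom{\alpha}{2}(q-1)$; both yield the same conclusion.
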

\begin{proof} Write $\frac{(q^\alpha-1)}{(q-1)}=\frac{((q-1)+1)^\alpha-1}{(q-1)}=
\alpha+\sum_{k\geq
2}\tbinom{\alpha}{k}(q-1)^{k-1}$. 
Let $s:=\max(|\alpha|,1)$, and for all $n\geq 1$ let
$l(n):=[\log_p(n)]$.
We now prove that if $|q-1|\leq |p|^{l(2)+1}/s$, 
then for all $k\geq 2$ we have 
$|\tbinom{\alpha}{k}(q-1)^{k-1}|\leq
|\tbinom{\alpha}{2}(q-1)|$ which is enough to conclude. 

Assume $k\geq n\geq 1$. The condition 
 $|\tbinom{\alpha}{k}(q-1)^{k-1}|\leq
|\tbinom{\alpha}{n}(q-1)^{n-1}|$ is equivalent to 
\begin{equation}\label{4444}
|q-1|\leq |\tbinom{\alpha}{n}/\tbinom{\alpha}{k}|^{\frac{1}{k-n}}=
\left(\frac{|k!|}{|n!|}
\frac{1}{|(\alpha-n)\cdots(\alpha-k+1)|}\right)^{\frac{1}{k-n}}\;.
\end{equation}
By Lemma \ref{coefficients of exponential} we know that $(\frac{k!}{n!})^{\frac{1}{k-n}}\geq
|p|^{l(n)+1}$. On the other hand, it is clear that
$|(\alpha-n)\cdots(\alpha-k+1)|\leq s^{k-n}$. 
Hence the right hand side of \eqref{4444} is bigger than $|p|^{l(n)+1}/s$. The claim is proved.\end{proof}

\begin{lemma}\label{valuation of q^d-1}
Let $j\geq 0$. 
If $j=0$, assume that $|q-1|<\omega$, and if $j\geq 1$ we assume that $\omega^{1/p^{j-1}}<|q-1|<\omega^{1/p^j}$. Let $d:=\alpha
p^m\in\mathbb{Z}_p$, with $\alpha\in\mathbb{Z}_p$ such that $(\alpha,p)=1$. Let
$i:=\min(m,j)$. Then
\begin{equation}
|q^d-1|=|d|\cdot\frac{|q-1|^{p^i}}{|p|^i}=|p^{m-i}||q-1|^{p^i}\;.
\end{equation}
\end{lemma}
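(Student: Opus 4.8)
The plan is to reduce everything to the scalar identity
\[
|q^{d}-1| \;=\; |d|\cdot \frac{|q-1|^{p^{i}}}{|p|^{i}}\;=\;|p^{m-i}|\,|q-1|^{p^{i}},
\]
where $i=\min(m,j)$, by analyzing the power series expansion of $q^{d}-1=((q-1)+1)^{d}-1=\sum_{k\geq 1}\binom{d}{k}(q-1)^{k}$. Since $d\in\mathbb{Z}_{p}$, each $\binom{d}{k}\in\mathbb{Z}_{p}$, so $|\binom{d}{k}|\leq 1$ and the series converges (as noted after Definition \ref{Def : q^a}, because $|q-1|<1$). The strategy is to locate the unique dominant term: I claim that the term of index $k=p^{i}$ strictly dominates all others in absolute value, so that $|q^{d}-1|=|\binom{d}{p^{i}}|\,|q-1|^{p^{i}}$, and then to compute $|\binom{d}{p^{i}}|$ explicitly.

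First I would compute $|\binom{d}{p^{i}}|$. Write $d=\alpha p^{m}$ with $(\alpha,p)=1$. Since $i=\min(m,j)\leq m$, one has $\binom{d}{p^{i}}=\frac{d(d-1)\cdots(d-p^{i}+1)}{p^{i}!}$; the numerator has $p$-adic valuation $v_{p}(d)+v_{p}((p^{i}-1)!) = m + (p^{i}-1-(i))/(p-1)$... more cleanly, $v_p\big(d(d-1)\cdots(d-p^i+1)\big)=v_p(d)+v_p((p^i-1)!)=m+v_p((p^i)!)-i$ using $v_p((p^i)!)=v_p(d-0)$ only if $i\le m$; precisely, among $d,d-1,\dots,d-p^i+1$ the multiples of $p^\ell$ for $\ell\le i$ are in the same positions as among $0,1,\dots,p^i-1$ shifted, except the entry $d$ itself contributes $v_p(d)=m$ instead of $v_p(0)=\infty$. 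Hence $v_p\big(\prod_{k=0}^{p^i-1}(d-k)\big)=m+v_p((p^i-1)!)$ and $v_p(p^i!)=v_p((p^i-1)!)+i$, giving $v_p\binom{d}{p^i}=m-i$, i.e. $|\binom{d}{p^{i}}|=|p^{m-i}|=|d|\cdot|p|^{i}/1$... rather $|\binom{d}{p^i}|=|p|^{m-i}=|d|/|p|^{-i}$; matching the statement, $|\binom{d}{p^i}|\,|q-1|^{p^i}=|p^{m-i}||q-1|^{p^i}=|d|\,|q-1|^{p^i}/|p|^i$, which is exactly the claimed value. So this step is a routine valuation count for binomial coefficients.

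Next, the heart of the argument: showing the term $k=p^{i}$ strictly dominates, i.e. $|\binom{d}{k}|\,|q-1|^{k}<|p^{m-i}|\,|q-1|^{p^{i}}$ for all $k\geq 1$, $k\neq p^{i}$. The bound $|\binom{d}{k}|\leq |p|^{\max(0,\,v_p(d)-v_p(k!))}$ (again from counting) combined with Lemma \ref{coefficients of logarithm} is the key input: for $j\geq 1$ and $\omega^{1/p^{j-1}}<\rho:=|q-1|<\omega^{1/p^{j}}$, that lemma tells us the sequence $\rho^{p^{\ell}}/|p^{\ell}|$ is strictly increasing for $\ell=0,\dots,j$ and strictly decreasing for $\ell\geq j$, and that $\rho^{p^{\ell}}/|p^{\ell}|$ beats $\rho^{r}/|r|$ for all other $r$. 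I would split into $k\leq p^m$ and $k>p^m$, and within that into the cases $i=j\le m$ and $i=m\le j$; in each case $|\binom d k|\le |p|^{\,m-v_p(k)}$ (when $v_p(k)\le m$) or $\le 1$, and one checks $|p|^{m-v_p(k)}\rho^{k}=|p|^{m}\cdot(\rho^{k}/|p|^{v_p(k)})\le|p|^m\cdot(\rho^{p^{v_p(k)}}/|p|^{v_p(k)})$, then uses the monotonicity/maximality from Lemma \ref{coefficients of logarithm} applied with exponent $\min(v_p(k),j)$ to conclude strict domination by the $k=p^i$ term; the case $j=0$ (so $\rho<\omega$, $i=0$) is handled directly by Lemma \ref{coefficients of logarithm} with $j=0$, which gives $\rho>\rho^{r}/|r|$ for all $r\geq 2$. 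Finally, the ultrametric inequality promotes the strict domination of a single term to the equality $|q^{d}-1|=|\binom{d}{p^{i}}|\,|q-1|^{p^{i}}$, and combined with the first step this is the asserted formula. The main obstacle is the bookkeeping in this domination step — tracking how $v_{p}(\binom{d}{k})$ interacts with the two regimes $k\lessgtr p^{m}$ and the clamp $i=\min(m,j)$ — but it is all governed by Lemma \ref{coefficients of logarithm} and a standard valuation estimate for $\binom{d}{k}$, so no genuinely new idea is needed.
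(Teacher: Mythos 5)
Your argument is correct, but it is organized differently from the paper's. The paper first factors $q^{d}=(q^{\alpha})^{p^{m}}$: a one-line binomial estimate gives $|q^{\alpha}-1|=|q-1|$ for $(\alpha,p)=1$, and then $q^{d}-1=((q^{\alpha}-1)+1)^{p^{m}}-1$ is a \emph{finite} sum $\sum_{k=1}^{p^{m}}\tbinom{p^{m}}{k}(q^{\alpha}-1)^{k}$ in which every coefficient valuation is exact, $|\tbinom{p^{m}}{k}|=|p|^{m}/|k|$, so the lemma reduces to maximizing $\rho^{k}/|k|$ over $1\leq k\leq p^{m}$, which is precisely what Lemma \ref{coefficients of logarithm} delivers (unique maximum at $k=p^{\min(m,j)}$ by unimodality). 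You instead expand $((q-1)+1)^{d}-1$ directly with the $p$-adic exponent $d$, which forces you to (a) compute $|\tbinom{d}{p^{i}}|=|p|^{m-i}$ exactly (your valuation count is right, since $v_p(\ell)<i\leq m=v_p(d)$ for $1\leq\ell\leq p^{i}-1$), (b) bound $|\tbinom{d}{k}|\leq|p|^{\max(0,\,m-v_p(k))}$ for the remaining terms (your first-stated bound with $v_p(k!)$ is the wrong exponent, but the one you actually use, with $v_p(k)$, is correct via $\tbinom{d}{k}=\tfrac{d}{k}\tbinom{d-1}{k-1}$), and (c) control an infinite tail. Items (a), (b) and Lemma \ref{coefficients of logarithm} do close the case $v_p(k)\leq m$ exactly as you sketch. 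The one place you leave genuinely unverified is the tail $v_p(k)>m$, where the bound degenerates to $|\tbinom{d}{k}|\leq 1$ and the reduction to $\rho^{k}/|k|$ no longer gives the right power of $p$ for free; it does go through --- for $i=j\leq m$ one has $\rho^{k}=|k|\cdot(\rho^{k}/|k|)\leq|p|^{m+1}\cdot(\rho^{p^{j}}/|p^{j}|)<|p|^{m-j}\rho^{p^{j}}$, and for $i=m\leq j$ simply $\rho^{k}\leq\rho^{p^{m+1}}<\rho^{p^{m}}$ --- but it is an extra case that the paper's factorization through $q^{\alpha}$ avoids altogether. In short: same key lemma, different decomposition; the two-step route buys exact coefficient valuations and a finite sum, yours is more self-contained but costs the tail estimate, which you should write out.
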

\begin{proof} Since $(\alpha,p)=1$, hence
$\left|\binom{\alpha}{1}\right|=1$. Then
\begin{equation}
|q^\alpha-1|=|((q-1)+1)^\alpha-1|=|
\sum_{k=1}^\infty\tbinom{\alpha}{k}(q-1)^k|=|q-1|\;.
\end{equation}
Moreover, one has
$|q^{\alpha p^m}-1| = |((q^\alpha-1)+1)^{p^m}-1| =
|\sum_{k=1}^{p^m}\tbinom{p^m}{k}(q^\alpha-1)^k|$. 
Since for all $k\leq p^m$ one has $|\binom{p^m}{k}|=\frac{|p|^m}{|k|}$, we deduce
$|\binom{p^m}{k}(q^\alpha-1)^k|=|p^m|\frac{\rho^k}{|k|}$. The claim follows from 
Lemma \ref{coefficients of logarithm} applied to
$\rho=|q-1|=|q^\alpha-1|$.
\end{proof}

\section{The Motzkin decomposition}\label{The motzkin 
decomposition}
In \cite{Motz} a decomposition theorem for analytic 
element over
an affino\"id domain of the line 
(i.e. a set of type
$\mathbb{P}_K^1-\cup_{i=1,\ldots,n}\mathrm{D}^-_K(a_i,r_i)$) is
proved. In \cite{Ch-Motz} G.Christol generalizes this
decomposition for matrices with coefficients in analytic
functions. We now generalize that theorem for series in
$\mathcal{E}_K$ (cf. \ref{motzkin}).\\

Let $I\subseteq\mathbb{R}_{\geq 0}$ be any non empty interval. 
We set $I_0:=I\cup [0,\rho]$ (resp. 
$I_\infty:= I\cup[\rho,+\infty]$), where $\rho\in I$. As 
an example if $I=[r_1,r_2[$ then $I_0=[0,r_2[$ and 
$I_\infty=[r_1,+\infty]$.
\begin{theorem}\label{Thm : Motzkin annulus}
Let $I\subseteq\mathbb{R}_{\geq 0}$ be any interval. 
Then each invertible function $a(T)\in\a_K(I)^\times$ 
can be uniquely written as
\begin{equation}\label{eq : deco - interval}
a(T)\;=\;\lambda\cdot  T^N \cdot a^-(T)\cdot a^+(T)\;,
\end{equation}
where $\lambda\in K$, $N\in\mathbb{Z}$, 
$a^+(T)=1+\alpha_1T+\alpha_2T^2+\cdots
\in 1+T\a_K(I_0)^\times$ and 
$a^-(T)=1+\alpha_{-1}T^{-1}+\alpha_{-2}T^{-1}+
\cdots \in 1+T^{-1}\a_K(I_\infty)^\times$.
\end{theorem}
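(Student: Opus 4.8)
The strategy is to reduce the statement over $\a_K(I)$ for a general interval $I$ to the two cases where $I$ is a closed disk ($0\in I$, so $I=[0,\rho_2]$) or a closed annulus $I=[\rho_1,\rho_2]$, and then apply the existing decomposition results. First I would observe that $\a_K(I)=\bigcap_{\rho\in I}\a_K(\{\rho\})$ in the appropriate Fr\'echet sense, but more usefully that for an invertible $a(T)\in\a_K(I)^\times$ the Newton polygon of $a$ is constant in $\rho$ over the \emph{interior} of $I$; hence there is a well-defined integer $N:=$ the (common) number of zeros of $a$ inside the disk of radius $\rho$ for $\rho$ just below the sup of $I$, counted with multiplicity, minus the number inside radius $\rho$ for $\rho$ just above the inf. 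Equivalently, writing the finite-radius factorization, $a$ has the form $\lambda T^N u(T)$ with $u\in 1+\a_K(I)$ close to $1$ in some fibre; the content of the theorem is the \emph{splitting} of $u$ into a part extending to $I_0$ (the disk $[0,\rho]$) and a part extending to $I_\infty$ (the annulus/ray $[\rho,+\infty]$).

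The key step is the splitting itself, and here I would invoke \cite{Motz} and Christol's \cite{Ch-Motz} directly. On a closed disk $\a_K([0,\rho_2])$, an invertible function is $\lambda T^N$ times a unit in $1+T\a_K([0,\rho_2])$ with no further splitting needed (take $a^-=1$); on a closed annulus $\a_K([\rho_1,\rho_2])$, Motzkin's theorem gives exactly the factorization $\lambda T^N a^-(T)a^+(T)$ with $a^+\in 1+T\a_K([0,\rho_2])^\times$ and $a^-\in 1+T^{-1}\a_K([\rho_1,+\infty])^\times$ — this is the classical statement that a unit on an annulus splits as (unit on the inner disk)$\times$(unit on the outer disk at $\infty$), after pulling out the monomial $\lambda T^N$. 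For a general interval $I$, pick any $\rho\in I$; then $\a_K(I)\subseteq\a_K([\rho,\rho])$, but I want to land in $\a_K(I_0)$ and $\a_K(I_\infty)$, so I would argue that the factors $a^+,a^-$ produced on the closed subannulus $[\rho-\eps,\rho+\eps]$ actually converge on the larger intervals: $a^+$ is a ratio/product built from the zeros of $a$ in $|T|<\rho$, all of which lie in a disk of radius $<\sup I$ by invertibility on $I$, so $a^+$ extends to $\a_K(I_0)$; symmetrically $a^-$ is built from zeros in $|T|>\rho$ (equivalently poles at $\infty$ of the reciprocal), all of norm $>\inf I$, so $a^-$ extends to $\a_K(I_\infty)$. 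The normalisations $a^+(0)=1$, $a^-(\infty)=1$ (i.e. constant term $1$ in $T$ resp. in $T^{-1}$) pin down $\lambda$ and $N$ uniquely.

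For uniqueness I would argue directly: if $\lambda T^N a^-a^+=\mu T^M b^-b^+$ are two such decompositions, then $a^+(b^+)^{-1}=\frac{\mu}{\lambda}T^{M-N}b^-(a^-)^{-1}$; the left side lies in $1+T\a_K(I_0)^\times$ and the right in $\frac{\mu}{\lambda}T^{M-N}\cdot(1+T^{-1}\a_K(I_\infty)^\times)$. Evaluating $|\cdot|_\rho$ for $\rho\in I$ and using that a function in $\a_K(I_0)^\times$ has Newton polygon with no slopes corresponding to $|T|>\sup I_0$ while one in $T^{M-N}\a_K(I_\infty)^\times$ has a pure monomial behaviour at its zero/pole set disjoint from $I$, one forces $M=N$, $\mu=\lambda$, and then $a^+=b^+$, $a^-=b^-$ by comparing Laurent coefficients. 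The main obstacle I anticipate is purely bookkeeping: making precise the claim that the Motzkin factors over a small closed subannulus automatically extend to the full $I_0$ and $I_\infty$ — this is really the statement that invertibility of $a$ on all of $I$ constrains the location of its zeros, so that no "new" zeros appear as one enlarges the annulus to $I_0$ or $I_\infty$; once that is said carefully the result follows from \cite{Motz}, \cite{Ch-Motz} with essentially no new computation.
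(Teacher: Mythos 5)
Your overall route---split the zeros and poles of a rational fraction by modulus, get the compact (affinoid) case from Motzkin, and pass to a general interval by exhaustion and uniqueness---is essentially the paper's: the paper proves the compact case by hand rather than citing it, using density of rational fractions in the Banach algebra $(\a_K(I),\|\cdot\|_I)$ for compact $I$ together with the key isometry estimate $\|a_n-a_m\|_I=\sup(|\lambda_n-\lambda_m|,\|h_n^--h_m^-\|_I,\|h_n^+-h_m^+\|_I)$, which shows that the constants and the two factors of the rational approximants converge separately. Two points in your sketch, however, need repair.

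First, you have the roles of the zeros reversed: $a^+$, the factor invertible on the disk $I_0$, is built from the zeros and poles of \emph{large} modulus (those with valuation in $I_\infty$), since $\prod(1-T/z)$ with $|z|\geq\sup I$ is a unit on $[0,\sup I[$; symmetrically $a^-$ comes from the small-modulus zeros and poles. Second, and more substantively, your justification for extending the factors from a small closed subannulus $[\rho-\eps,\rho+\eps]$ to $I_0$ and $I_\infty$ (``no new zeros appear as one enlarges the annulus'') is not an argument for a transcendental $a\in\a_K(I)^\times$: such an $a$ has no zeros at all on $\{|T|\in I\}$, and its Motzkin factors are not literally ``built from zeros''---that description only makes sense for the rational approximants. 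The correct mechanism is the one the paper uses to pass from compact to non-compact $I$: apply the compact-annulus case to an exhaustion $J_1\subset J_2\subset\cdots$ of $I$ by closed subintervals; uniqueness forces the decompositions over $J_n$ and $J_{n+1}$ to agree on $J_n$, so the factors glue to invertible elements of $\a_K(I_0)$ and $\a_K(I_\infty)$ because $\bigcup_n (J_n)_0=I_0$ and $\bigcup_n(J_n)_\infty=I_\infty$. With that replacement (and noting that for compact $I$ one should simply apply the decomposition theorem to all of $I$ rather than to a proper subannulus), your plan coincides with the paper's proof.
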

Before giving the proof we need two lemmas.
Let $\overline{I}$ be the closure of $I$ in 
$\mathbb{R}$. Invertible functions are bounded, so it 
has a meaning to consider their norm $|.|_\rho$ for all 
$\rho\in\overline{I}$.
\begin{lemma}\label{|a_-i|<1}
Let  
$a^+(T)=1+\alpha_{1}T+\alpha_{2}T^{2}+\cdots$ 
be an invertible function in $\a_K(I_0)$. 
If $r\in I_0$, for all $i\geq 1$ we have 
$|\alpha_i|r^i< 1$. 
If $r\in \overline{I_0}$ for all $i\geq 1$ we have 
$|\alpha_{i}|r^{i}\leq 1$.

The same claim holds for functions $a^-(T)\in 
\a_K(I_\infty)$.
\if{
Let $a^-(T)=1+\alpha_{-1}T^{-1}+\alpha_{-2}T^{-2}+\cdots$ 
be an invertible function in $1+T^{-1}\a_K([r,\infty])$. 
Let $a^+(T)=1+\alpha_{1}T+\alpha_{2}T^{2}+\cdots$ 
be an invertible function in $\a_K([0,r[)$. Then
for all $i\geq 1$ we have 
$|\alpha_i|r^i\leq 1$ and 
$|\alpha_{-i}|r^{-i}<1$.
}\fi
\end{lemma}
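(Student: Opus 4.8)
The plan is to reduce everything to the factor $a^+(T)$ on a disk centered at $0$, get the non-strict bound from multiplicativity of the Gauss norms, and then sharpen it by a scaling argument with one residue computation at the boundary. First I would dispose of $a^-$: the substitution $T\mapsto T^{-1}$ identifies $\a_K(I_\infty)$ with $\a_K(J)$, where $J:=\{1/t:t\in I_\infty\}$; as $I_\infty$ contains a neighbourhood of $+\infty$, the interval $J$ contains $0$ and is of the same shape as $I_0$, and the substitution carries $a^-(T)=1+\sum_{i\geq1}\alpha_{-i}T^{-i}$ to $1+\sum_{i\geq1}\alpha_{-i}S^{i}$ with $S=T^{-1}$, so the inequalities $|\alpha_{-i}|r^{-i}<1$ (resp. $\leq 1$) for $r\in I_\infty$ (resp. $r\in\overline{I_\infty}$) are exactly the assertion below for this $a^+$-type series on $J$. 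Hence it suffices to treat $a^+(T)=1+\sum_{i\geq1}\alpha_iT^i\in\a_K(I_0)^\times$.

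For the non-strict bound, recall that $|\cdot|_\rho$ is multiplicative on $\a_K(I_0)$ for $\rho>0$. From $a^+\cdot(a^+)^{-1}=1$ we get $|a^+|_\rho\cdot|(a^+)^{-1}|_\rho=1$; but $a^+$ and $(a^+)^{-1}$ both have constant term $1$, so both norms are $\geq1$, forcing both to equal $1$. Therefore $|\alpha_i|\rho^i\leq|a^+|_\rho=1$ for all $i\geq1$ and all $\rho\in I_0$ (the value $\rho=0$ being trivial), and by continuity of $\rho\mapsto|\alpha_i|\rho^i$ this still holds on $\overline{I_0}$. This gives the second statement of the lemma.

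For the strict inequality on $I_0$, fix $\rho\in I_0$ and $i\geq1$ with $\alpha_i\neq0$. If $\rho$ is not the right endpoint of $\overline{I_0}=[0,\sup I_0]$, choose $\rho'\in\overline{I_0}$ with $\rho<\rho'$; then $|\alpha_i|\rho^i=(\rho/\rho')^i\,|\alpha_i|\rho'^i<1$. The only remaining case is $\rho=\sup I_0\in I_0$, i.e. $I_0=[0,\rho]$ closed and $a^+\in\a_K([0,\rho])^\times$. Assume for contradiction that $|\alpha_j|\rho^j=1$ for some $j\geq1$; since $|\alpha_j|\in|K^\times|$, this forces $\rho^j$, hence $\rho$, into the value group of $\wKa$, which is divisible, so pick $\gamma\in\wKa$ with $|\gamma|=\rho$. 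Extending scalars to $\wKa$ and setting $T=\gamma S$ turns $a^+$ into a unit $a^+(\gamma S)$ of $\a_{\wKa}([0,1])$ with $|a^+(\gamma S)|_1=|a^+|_\rho=1$ and coefficients in $\O_{\wKa}$; its reduction modulo the maximal ideal is a polynomial in $\overline k[S]$ with constant term $1$ and a nonzero coefficient in degree $j\geq1$, hence not a unit in $\overline k[S]$ (whose units are $\overline k^\times$) --- contradicting that reductions of units are units. Thus $|\alpha_j|\rho^j<1$, which settles $a^+$, and $a^-$ follows from the first paragraph.

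I expect the only genuinely delicate point to be this last one, the strictness at a closed right endpoint: it is the single place where one must leave the ground field, pass to $\wKa$, and use the description of the residue ring of a closed disk. The remaining ingredients --- multiplicativity of $|\cdot|_\rho$, the observation that $a^+$ and its inverse have constant term $1$, and the scaling argument --- are routine.
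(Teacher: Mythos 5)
Your proof is correct and follows essentially the same route as the paper: both rest on the fact that the Gauss norm of an invertible function with unit constant term is identically $1$ (you obtain it from multiplicativity of $|\cdot|_\rho$, the paper from the break-free valuation polygon), followed by a residue argument at a closed radius. The only cosmetic differences are that you conclude via ``the reduction of a unit is a unit'' where the paper lifts a zero of the nonconstant reduction, and that you are more careful about rescaling when $r$ does not lie in $|K^\times|$, passing to $\widehat{K^{\mathrm{alg}}}$.
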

\begin{proof} 
By replacing $T$ with $\gamma_r T$, with 
$|\gamma_r|=r$, we
can suppose $r=1$. 

Since $a^+$ is invertible, its valuation polygon has no 
breaks (cf. \cite[Chapitre 2]{Ch-Ro}), so 
for all $\rho\leq 1$ we have 
$|a^+|_\rho=|a^+(0)|=1$. 
Hence $|\alpha_i|\leq 1$ for all $i\geq 1$.

If now $r=1\in I_0$, and if there exists $i\geq 1$ such that 
$|\alpha_{i}|=1$, the reduced series 
$\overline{a^+(T)}\in k[T]$ is a non constant polynomial. 
The zeros of $\overline{a^+(T)}$ lift into zeros of $a^+(T)$, 
which contradicts the fact that $a^+(T)$ is invertible, 
hence without zeros in the closed unit disks.
\if{Let now $X=T^{-1}$, and let
$P(X):=a^-(X^{-1})=1+\alpha_{-1}X+\alpha_{-2}X^2+\cdots$. 
As for $a^+$ for all $i\geq 1$ 
we have $|\alpha_{-i}|\leq 1$. 

If now there exists $i\geq 1$ such that 
$|\alpha_{-i}|=1$, the reduced series 
$\overline{P(X)}\in k[X]$ is not constant. 
The zeros of $\overline{P(X)}$ lift into zeros of $P(X)$, 
which contradicts the fact that $P(X)$ is invertible, 
hence without zeros in the closed unit disks.
}\fi
\if{
Then there exists a root $\bar{\alpha}$ of
$\overline{P^-(X)}$ in $k^{\mathrm{alg}}$. This shows that there is
$\zeta$ in $K^{\mathrm{alg}}$, with $|\zeta|=1$ and such that
$|P(\zeta)|<1$. If $\zeta$ is a zero of $P^-(X)$, then $P^-(X)$ is not
invertible, this is in contradiction with the hypothesis. Hence
$\zeta$ is not a zero of $P^-(X)$. Let us write $P(X)=\sum_{i\geq
0}b_i(X-\zeta)^i$. Since $|P^-(\zeta)|<1$, then $|b_0|<1$. Since
$|P^-(X)|_1=1$, then $\lim_{\rho\to 1}\sup_{i\geq 0}|b_i|\rho^i=1$.
This implies that the polygon of valuation of $|P^-(X)|$ 
has a change of slope. Hence $P^-(X)$ has a zero in the closed unit ball
$|T|\leq 1$. This is the desired contradiction.
}\fi
\end{proof}
\begin{lemma}\label{c_0=1}
Let $\rho\in\overline{I}$. Let
$a^-(T)=1+\alpha_{-1}T^{-1}+\alpha_{-2}T^{-2}+\cdots\in
\a_K(I_\infty)^{\times}$, and
$a^+(T)=1+\alpha_{1}T+\alpha_{2}T^{2}+\cdots\in
\a_K(I_0)^{\times}$ be invertible functions. 
Then
\begin{equation}
|a^-(T)\cdot a^+(T)-1|_\rho < 1\;.
\end{equation}
\end{lemma}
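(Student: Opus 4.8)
The plan is to expand the product and estimate term by term. It is convenient to reduce first to the case $\rho=1$, exactly as in the proof of Lemma \ref{|a_-i|<1}: replacing $T$ by $\gamma T$ with $|\gamma|=\rho$ turns $\a_K(I)$, $\a_K(I_0)$, $\a_K(I_\infty)$ into the analogous rings attached to the rescaled interval and does not alter the shape of the statement, so we may assume $1\in\overline I$. Writing $u:=a^-(T)-1$ (a series in strictly negative powers of $T$) and $v:=a^+(T)-1$ (a series in strictly positive powers of $T$), one has the identity
\[
a^-(T)\,a^+(T)-1\;=\;u+v+uv ,
\]
so that, by the ultrametric inequality, $|a^-(T)a^+(T)-1|_1\le\max(|u|_1,|v|_1)$ as soon as $|u|_1\le 1$ and $|v|_1\le 1$. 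Thus it suffices to prove $|a^-(T)-1|_1<1$ and $|a^+(T)-1|_1<1$.

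Since $a^+\in\a_K(I_0)^\times$ and $a^-\in\a_K(I_\infty)^\times$ are invertible, hence bounded, the norm $|\cdot|_1$ makes sense on them, and $1\in\overline I\subseteq\overline{I_0}\cap\overline{I_\infty}$. Lemma \ref{|a_-i|<1} then gives $|\alpha_i|\le 1$ and $|\alpha_{-i}|\le 1$ for all $i\ge 1$, hence $|u|_1\le 1$, $|v|_1\le 1$, and already $|a^-(T)a^+(T)-1|_1\le 1$. The strict inequality is provided by the strict half of Lemma \ref{|a_-i|<1}: whenever the radius in question lies in $I_0$ (resp.\ in $I_\infty$) one has $|\alpha_i|<1$ (resp.\ $|\alpha_{-i}|<1$) for every $i\ge 1$. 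In particular, after the rescaling, if $1$ lies in the interior interval — equivalently, if the original $\rho$ belongs to $I$ — then both $|u|_1$ and $|v|_1$ are $<1$, and the displayed identity yields the assertion. This already settles every $\rho\in I$.

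The step I expect to be the main obstacle is the behaviour at an endpoint of $\overline I$ not belonging to $I$ (there are at most two such points): there precisely one of the two factors — say $a^+$ when $1=\sup I\notin I_0$ — is controlled only by the non-strict estimate $|\alpha_i|\le 1$, so the term-by-term bound gives merely $|a^-(T)a^+(T)-1|_1\le 1$. To remove this ambiguity I would argue by continuity together with the unit property of $b:=a^-(T)a^+(T)$: the function $\rho\mapsto|b-1|_\rho$ is continuous on $\overline I$ and is $<1$ on the dense subinterval $I$ by the previous paragraph, while $b$ is itself a unit of $\a_K(I)$, so its valuation polygon has no break and $|a^\pm|_\rho=1$ on the whole of $I_0$, resp.\ $I_\infty$ (cf.\ \cite[Chapitre 2]{Ch-Ro}); combining this with the strict control of the \emph{other} factor at the endpoint should exclude the limiting value $1$. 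This limiting analysis at the two extremities, and not the interior estimate, is the only genuinely delicate point.
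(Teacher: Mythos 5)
Your argument for $\rho\in I$ is correct and is essentially the paper's own: the identity $a^-a^+-1=u+v+uv$ together with the ultrametric inequality is just a repackaging of the paper's coefficient-by-coefficient estimate of $c_n=\sum_{k\ge0}\alpha_{n+k}\alpha_{-k}$ (with $\alpha_0:=1$), and for such $\rho$ one has $\rho\in I_0\cap I_\infty$, so Lemma \ref{|a_-i|<1} gives the strict bound for both factors, whence $|u|_\rho<1$ and $|v|_\rho<1$. (To pass from ``every coefficient is $<1$'' to ``the sup is $<1$'' one should add that $|\alpha_{\pm i}|\rho^{\pm i}\to0$, so the sup is attained; you omit this, as does the paper, but it is harmless.)

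The endpoint case is where your proof genuinely stops, and the continuity device you propose cannot close it: continuity of $\rho\mapsto|a^-a^+-1|_\rho$ only transports the non-strict inequality $\le 1$ to the endpoint, and the ``strict control of the other factor'' is of no use there, because the obstructing terms are the pure coefficients $\alpha_nT^n$ of the uncontrolled factor, which enter $c_n$ multiplied by $\alpha_0=1$ and not by any coefficient of the controlled factor. In fact no argument can close this gap, because the statement fails at such a point: take $I=\,]1/2,1[$, so that $I_0=[0,1[$ and $I_\infty=\,]1/2,\infty]$, and take $a^-=1$ and $a^+=1+T$ (invertible in $\a_K([0,1[)$ with inverse $\sum_k(-T)^k$); then $\rho=1\in\overline I$ while $|a^-a^+-1|_1=|T|_1=1$. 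You should be aware that the paper's own proof commits exactly the oversight you flagged: it asserts $|c_n|\rho^n<1$ for all $n\ge1$ from ``one factor strict, the other $\le1$'', forgetting the $k=0$ term. What does survive at such an endpoint --- and is all that is actually used later, e.g.\ in Lemma \ref{lambda=1 and N=0} --- is $|c_0-1|_\rho<1$ together with strictness of the coefficients on the side whose factor is strictly controlled, since every term of those coefficients contains at least one strictly bounded factor. So your diagnosis of the delicate point was exactly right; the correct repair is to restrict the statement to $\rho\in I_0\cap I_\infty$ (or to weaken the conclusion at the remaining endpoints), not to look for a limiting argument.
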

\if{
\begin{lemma}\label{c_0=1}
Let $r_1\leq r_2$ and $\rho\in [r_1,r_2]$. Let
$a^-(T)=1+\alpha_{-1}T^{-1}+\alpha_{-2}T^{-2}+\cdots\in
\a_K([r_1,\infty])^{\times}$, and
$a^+(T)=1+\alpha_{1}T+\alpha_{2}T^{2}+\cdots\in
\a_K([0,r_2[)^{\times}$ be invertible functions. 
Then
\begin{equation}
|a^-(T)\cdot a^+(T)-1|_\rho < 1\;.
\end{equation}
\end{lemma}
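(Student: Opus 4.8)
The plan is to deduce the estimate from the coefficient bounds already proved in Lemma~\ref{|a_-i|<1}, after a harmless normalization. Exactly as in the proof of that lemma, replacing $T$ by $\gamma_\rho T$ with $|\gamma_\rho|=\rho$ carries $\a_K(I_0)$ and $\a_K(I_\infty)$ to the corresponding rings for the rescaled interval, carries $|\cdot|_\rho$ into the Gauss norm $|\cdot|_1$, and preserves the shape $1+\sum_{i\ge1}\alpha_i T^{\pm i}$ of $a^\pm$ as well as their invertibility. So we may assume $\rho=1\in\overline I$.

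Write $u:=a^-(T)-1$ and $v:=a^+(T)-1$, so that $a^-(T)a^+(T)-1=u+v+uv$. By Lemma~\ref{|a_-i|<1} one has $|v|_1=\sup_{i\ge1}|\alpha_i|\le1$ and $|u|_1=\sup_{i\ge1}|\alpha_{-i}|\le1$ because $1\in\overline I\subseteq\overline{I_0}\cap\overline{I_\infty}$; moreover these bounds are \emph{strict} as soon as $1$ genuinely lies in $I_0$ (resp.\ in $I_\infty$), since then $a^+$ (resp.\ $a^-$) is a unit of $\a_K$ on a disc containing the whole circle of radius $1$, so it has no zero there, its reduction modulo the maximal ideal is a unit of the residue ring with constant term $1$, hence equals $1$. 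Now every $\rho\in\overline I$ that is not an endpoint of $I$ lying outside $I$ belongs to both $I_0$ and $I_\infty$; for such $\rho=1$ we get $|u|_1<1$ and $|v|_1<1$, and then the ultrametric inequality gives
\[
|a^-(T)a^+(T)-1|_1 \;=\; |u+v+uv|_1 \;\le\; \max\bigl(|u|_1,\,|v|_1,\,|u|_1|v|_1\bigr)\;<\;1 .
\]

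It remains to handle the case where $1$ is an endpoint of $I$ lying outside $I$, say $1=\sup I\notin I$ (the case $1=\inf I\notin I$ being symmetric under $a^-\leftrightarrow a^+$). Then $1$ still belongs to $I_\infty=I\cup[\inf I,\infty]$, so $|u|_1<1$ strictly by the above, whence $|uv|_1\le|u|_1<1$ and $|u+uv|_1<1$; it is therefore enough to control $|v|_1$. Here the reduction argument no longer applies verbatim, because $a^+$ is a unit only on the \emph{open} disc $I_0=[0,1[$ and its boundary circle may carry zeros of $a^+$; one must instead pass to the boundary, using that $\rho'\mapsto|a^+-1|_{\rho'}$ is continuous on $\overline I$ and is $<1$ throughout $I_0$, together with the fact that the valuation polygon of the unit $a^+$ over $[0,1[$ has no break and takes the value $|a^+(0)|=1$ at the origin. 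This limiting step at the excluded endpoint — verifying that the strict bound on $v$ survives there — is the delicate point of the argument; the interior estimate treated in the previous paragraph is completely routine.
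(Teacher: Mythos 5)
Your normalization to $\rho=1$, the decomposition $a^-a^+-1=u+v+uv$, and the interior case (where $\rho$ lies in both $I_0$ and $I_\infty$, so that $|u|_\rho<1$ and $|v|_\rho<1$ both hold by Lemma \ref{|a_-i|<1}) are all correct. The gap is the excluded-endpoint case, which you do not actually prove: you reduce it to showing that the strict bound $|v|_1=|a^+-1|_1<1$ ``survives'' at $\rho=1=\sup I\notin I_0$, and you propose to obtain this by a continuity/limiting argument from the strict bound valid on $I_0$. That step cannot be completed. Continuity of $\rho'\mapsto|a^+-1|_{\rho'}$ only gives $|a^+-1|_1=\lim_{\rho'\to 1^-}|a^+-1|_{\rho'}\le 1$, and equality genuinely occurs: $a^+(T)=1+T$ is invertible in $\a_K([0,1[)$ (its unique zero lies on the circle $|T|=1$, outside the open disk), yet $|a^+-1|_1=|T|_1=1$. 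Taking moreover $a^-=1$ gives $|a^-a^+-1|_1=1$, so at an excluded endpoint the strict inequality does not follow from the stated hypotheses alone, and no repair of your limiting step is possible without importing further information about $a^+$ (in the paper's applications this comes from elsewhere, e.g.\ from solvability via the small-radius lemma).

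For comparison, the paper argues coefficient-wise on the Cauchy product $a^-a^+=\sum_n c_nT^n$, with $c_n=\sum_{k\ge0}\alpha_{n+k}\alpha_{-k}$ and $\alpha_0:=1$: every mixed term with $k\ge 1$ couples a coefficient that is strictly less than $1$ (coming from whichever of $I_0$, $I_\infty$ contains $\rho$) with one that is at most $1$. This is finer than your global bound $\max(|u|_\rho,|v|_\rho,|uv|_\rho)$ and in particular settles $|c_0-1|_\rho<1$ in all cases. But for $n\ne 0$ the diagonal term $\alpha_n\alpha_0=\alpha_n$ of $c_n$ is exactly the quantity you could not control, so your proposal has correctly isolated the real difficulty of the statement at an excluded endpoint; it simply does not, and as formulated cannot, resolve it.
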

}\fi
\begin{proof} Write $a^-(T)a^+(T)=\sum_{n\in\mathbb{Z}}c_nT^n$. If we 
define $\alpha_0:=1$, then, for all $n\geq 0$ one has 
$c_n=\sum_{k=0}^\infty
\alpha_{n+k}\alpha_{-k}$, and
$c_{-n}=\sum_{k=0}^{\infty}\alpha_{-n-k}\alpha_k$. By
Lemma \ref{|a_-i|<1}, either  
for all $k\geq 1$ we have 
$|\alpha_{-k}|\rho^{-k}<1$, and $|\alpha_k|\rho^k\leq 1$, or for all $k\geq 1$ we have 
$|\alpha_{-k}|\rho^{-k}\leq 1$, and $|\alpha_k|\rho^k< 1$. Since $\lim_{k\to\pm\infty}|\alpha_{k}|\rho^{k}=0$ 
then for all $n\geq 1$ one have 
$|c_{n}|\rho^n<1$, and
$|c_{-n}|\rho^{-n}<1$, and $|c_0-1|< 1$. 
\end{proof}
\begin{proof}[Proof of Theorem \ref{Thm : Motzkin annulus}] We first prove the claim for a rational 
fraction $a=P/Q$, $P,Q\in K[T]$. 
Let $Z_0$ and $V_0$ (resp. $Z_\infty$ and $V_\infty$) 
be the set of its zeros and poles respectively whose 
valuation belongs to $I_0$ (resp. $I_\infty$). Since 
$\mathrm{Gal}(K^{\mathrm{alg}}/K)$ acts by 
isometric maps, the  polynomials 
$P_0       :=\prod_{z\in Z_0-\{0\}}(T-z)$, 
$P_\infty :=\prod_{z\in Z_\infty}(T-z)$, 
$Q_0      :=\prod_{v\in V_0-\{0\}}(T-v)$, 
$Q_\infty:=\prod_{v\in V_\infty}(T-v)$ lie in $K[T]$ 
since their coefficients are invariant by Galois. 
Now $P=\alpha T^s P_0P_\infty$ and 
$Q=\beta T^r Q_0Q_\infty$, for convenient 
$\alpha,\beta\in K$, $r,s\in\mathbb{N}$. 
We then have $a^+(T)=\alpha' P_\infty/Q_\infty$, 
$a^-(T)=\beta'P_0/Q_0$, for convenient constants 
$\alpha',\beta'\in K$.

We now deduce \emph{by density} 
the case where $I$ is a compact interval. If $\|.\|_I$ 
is the sup-norm on $\{|T|\in I\}$, the Frechet 
topology of $\a_K(I)$ is given by the individual norm 
$\|.\|_I$, and $(\a_K(I), \|.\|_I)$ is a Banach algebra.

Let $a(T)=\sum_{i\in\mathbb{Z}}b_iT^i$ be 
as in the claim. For all $\rho\in I$ we have 
$\lim_{i\to\pm\infty}|b_i|\rho^i=0$ so for all 
$\rho\in I$ we can consider the integer
$N_\rho:=\min(i\;|\;|b_i|\rho^i=|a(T)|_\rho)$. 
Since $a$ is invertible, the $\log$-function 
$r\mapsto\log(|a(T)|_{\exp(r)})$ is affine 
on $\overline{I}$ of slope $N\in\mathbb{Z}$. 
This means that 
$N_\rho=N$ for all $\rho\in\overline{I}-\inf(I)$. 
Moreover if 
$\inf(I)\in I$ the equality also holds at $\rho=\inf(I)$ by 
\cite[Thm. 5.4.7]{Ch-Ro}. Multiplying by 
$(b_NT^N)^{-1}$ we can assume $N=0$ and 
$|a|_\rho=1$ for all $\rho\in\overline{I}$.

Let $a_n(T)$ be a sequence of rational fractions 
convergent to $a(T)$. 
Then for $n$ sufficiently large $a_n(T)$ has no poles
nor zeros on $\{|T|\in I\}$, hence $a_n(T)$ admits such 
a decomposition:
$a_n(T)=\lambda_nT^{N_n}a^-_n(T)a_n^+(T)$. 
Moreover there exists $n_0$ 
such that for all $n\geq n_0$ we have $N_n=0$, and 
$|\lambda_n|=1$. 

We now prove that, if $a^+_n=1+h_n^+$ and 
$a^-_n=1+h_n^-$, then for all $n,m\geq n_0$ the 
norms $|\lambda_n-\lambda_m|$, $\|a_n^+-a_m^+\|_I=\|h_n^+-h_m^+\|_I$, and $\|a_n^--a_m^-\|_I=\|h_n^--h_m^-\|_I$ are all bounded by $\|a_n-a_m\|_I$. Since $T^{-1}\a_K([r_1,\infty])$ and
$T\a_K([0,r_2])$ are closed sets in $\a_K(I)$, 
this will be enough to show that the sequences 
$n\mapsto\lambda_n$, $n\mapsto h_n^-$, and $n\mapsto h_n^+$, all converge in $K$, $T^{-1}\a_K([r_1,\infty])$ and
$T\a_K([0,r_2])$ respectively. 
This will be enough to obtain the desired decomposition 
\eqref{eq : deco - interval}.

Let $n,m\geq n_0$. We let 
$1+h^-:=\frac{1+h_n^-}{1+h_m^-}$ and 
$1+h^+:=\frac{1+h_m^+}{1+h_n^+}$. Then
\begin{eqnarray}
\| a_n-a_m\|_I\;=\;
\| \lambda_na_n^-a_n^+-\lambda_ma_m^-a_m^+
\|_I
&=&\Bigl\| \frac{\lambda_na_n^-a_n^+-
\lambda_ma_m^-a_m^+}{a_n^+a_m^-}\Bigr\|_I\\
&=&\Bigl\|\lambda_n\frac{a_n^-}{a_m^-}-\lambda_m\frac{a_m^+}{a_n^+} \Bigr\|_I\\
&=&\|(\lambda_n-\lambda_m)+
\lambda_nh^--\lambda_mh^+\|_I
\end{eqnarray}
We now notice that $h^-$ (resp. $h^+$) 
is a power series of the form 
$b_{-1}T^{-1}+b_{-2}T^{-2}+\cdots$  
(resp. $b_{1}T+b_{2}T^{2}+\cdots$), hence
for all $\rho\in I$ we have
\begin{eqnarray}
|(\lambda_n-\lambda_m)+\lambda_nh^-
-\lambda_mh^+|_\rho&\;=\;&
\max(|\lambda_n-\lambda_m|,
|\lambda_n|\sup_{i\leq -1}|b_i|\rho^i,
|\lambda_m|\sup_{i\geq 1}|b_i|\rho^i)\\
&\;=\;&
\max(|\lambda_n-\lambda_m|, |h^-|_\rho, 
|h^+|_\rho)\;.
\end{eqnarray}
So we find
\begin{eqnarray}
\|a_n-a_m\|_I\;=\;
\sup(|\lambda_n-\lambda_m|,\|h^-\|_I,\|h^+\|_I)\;.
\end{eqnarray}
Now $\|h^+\|_I=\Bigl\| 
\frac{1+h_m^+}{1-h_n^+}-1
\Bigr\|_I=\Bigl\| 
\frac{h_m^+-h_n^+}{1-h_n^+}
\Bigr\|_I=\|h_m^+-h_n^+\|_I$, and analogously 
$\|h^-\|_I=\|h_m^--h_n^-\|_I$. This gives the desired 
inequalities.

The case where $I$ is non compact is deduced 
by expressing $I$ as increasing union of compact 
intervals $J_n\subset J_{n+1}\subset I$. 
The uniqueness of the decomposition shows that the 
decomposition over $J_n$ coincides with that over 
$J_{n+1}$, and we conclude.
\end{proof}
\begin{theorem}\label{motzkin}
Assume that $K$ is discretely valuated. 
Let $a(T)\in\mathcal{E}_K$. Then
there exist  $\lambda\in K$,  $N\in\mathbb{Z}$, $a^-(T)=1+h^-(T)$
invertible in $1+T^{-1}\a_K([1,\infty])$, with $h^-(T)=\sum_{i\leq
-1}\alpha_iT^i$, and $a^+(T)=1+h^+(T)$ invertible in
$1+T\a_K([0,1[)$, with $h^+(T)=\sum_{i\geq 1}\alpha_iT^i$, such
that
$$a(T)=\lambda\cdot T^N \cdot a^-(T)\cdot a^+(T).$$
Moreover, such a decomposition is unique.
\end{theorem}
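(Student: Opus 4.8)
The plan is to deduce Theorem \ref{motzkin} from the annulus version, Theorem \ref{Thm : Motzkin annulus}, by a limiting argument as the annulus $\{|T|\in[r,1]\}$ swells up to the ``punctured closed disk'' underlying $\mathcal{E}_K$. First I would fix a decreasing sequence $r_j\nearrow 1$ and apply Theorem \ref{Thm : Motzkin annulus} to $a(T)$ viewed in $\a_K([r_j,1])^\times$ --- note $\mathcal{E}_K\subset\a_K([r_j,1])$ and $a(T)$ is invertible there because $|a|_\rho$ is nonzero for $\rho$ near $1$ (after normalizing, using the small-radius philosophy only if needed, but really just because $a\in\mathcal{E}_K^\times$ forces $|a|_1\ne 0$, and then $|a|_\rho\ne 0$ on a neighbourhood of $1$; here is where discreteness of $K$, guaranteeing that $\mathcal{E}_K$ is a field, is used). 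This gives for each $j$ a decomposition $a=\lambda_j T^{N_j} a_j^-(T) a_j^+(T)$ with $a_j^+\in 1+T\a_K([0,1[)^\times$, $a_j^-\in 1+T^{-1}\a_K([r_j,\infty])^\times$. By uniqueness in Theorem \ref{Thm : Motzkin annulus}, restricting the $(j{+}1)$-decomposition to the smaller annulus $[r_{j+1},1]\subset[r_j,1]$ shows $\lambda_{j+1}=\lambda_j$, $N_{j+1}=N_j$, $a_{j+1}^+=a_j^+$, and $a_{j+1}^-|_{[r_j,\infty]}=a_j^-$; so $\lambda:=\lambda_j$, $N:=N_j$, $a^+:=a_j^+$ are already well defined, and the $a_j^-$ glue to a single $a^-(T)\in 1+T^{-1}\a_K([1,\infty])^\times$ (the union $\cup_j[r_j,\infty]=[1,\infty]$, invertibility being a local condition).

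What remains is to check that the resulting identity $a=\lambda T^N a^- a^+$, valid as analytic functions on $\{|T|\in[r_j,1]\}$ for every $j$, actually places $a^-$ and $a^+$ and their product in the right rings, in particular that $a^+\in 1+T\a_K([0,1[)$ really (not just on $[r_j,1]$): this is immediate since $a_1^+$ is by construction in $\a_K([0,1[)$, and similarly $a^-\in 1+T^{-1}\a_K([1,\infty])$ by the gluing. The coefficient bounds $|\alpha_i|<1$ for the ``tails'' come from Lemma \ref{|a_-i|<1} (applied at $\rho=1\in\overline{[r_j,1]}$, so we get the non-strict bound $|\alpha_i|\le 1$; strictness on the open annuli is automatic), and Lemma \ref{c_0=1} shows $|a^-a^+-1|_1<1$, which is what makes $a^-(T)a^+(T)\in\mathcal{E}_K$ and hence, together with $\lambda T^N$, forces $a=\lambda T^N\cdot(a^-a^+)\in\mathcal{E}_K$ consistently --- but that we already know; the point of invoking Lemma \ref{c_0=1} is rather to see that no ``extra'' terms escape $\mathcal{E}_K$. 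Uniqueness of the decomposition follows from the uniqueness in Theorem \ref{Thm : Motzkin annulus}: any two decompositions over $\mathcal{E}_K$ restrict to decompositions over each $\a_K([r_j,1])$ and hence coincide there for every $j$, so they coincide.

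The main obstacle, and the only genuinely delicate point, is verifying that the $a_j^+$ are literally independent of $j$ as elements of $\a_K([0,1[)$ (equivalently, that the series for $a^+$, determined a priori only on the annulus, has radius of convergence reaching $0$) and likewise that $a^-$ lands in $\a_K([1,\infty])$ rather than merely in $\cup_j\a_K([r_j,\infty])=\a_K(]1,\infty])$ --- i.e. that it converges \emph{up to} $\rho=1$. The first is handled by the uniqueness-forced compatibility $a_{j+1}^+=a_j^+$ noted above, so $a^+=a_1^+\in\a_K([0,1[)$ with no further work. The second is where $a\in\mathcal{E}_K$ (not just $a\in\mathfrak{R}_K$) is essential: since $\lambda T^N$ and $a^+(T)^{\pm1}\in 1+T\a_K([0,1[)$ are bounded on $\{|T|=1\}$ with $a^+$ invertible there, the identity $a^-=\lambda^{-1}T^{-N}a\,(a^+)^{-1}$ exhibits $a^-$ as a product of elements of $\mathcal{E}_K$, hence $a^-\in\mathcal{E}_K$; combined with $a^-\in 1+T^{-1}\a_K(]1,\infty])$ and the tail bounds $|\alpha_i|\le 1$, $\lim_{i\to-\infty}|\alpha_i|=0$ coming from Lemma \ref{|a_-i|<1} plus the defining property of $\mathcal{E}_K$, we get $\lim_{i\to-\infty}|\alpha_i|\rho^i=0$ for $\rho=1$ as well, so $a^-\in 1+T^{-1}\a_K([1,\infty])$ as claimed. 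Assembling these remarks gives the theorem.
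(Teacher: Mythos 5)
Your argument breaks down at the very first step: the inclusion $\mathcal{E}_K\subseteq\a_K([r_j,1])$ that the whole plan rests on is false. An element $a(T)=\sum_{i\in\mathbb{Z}}a_iT^i$ of $\mathcal{E}_K$ only has \emph{bounded} coefficients for $i\to+\infty$ (so $|a_i|\not\to 0$ in general, which already rules out membership in $\a_K$ of any interval containing $\rho=1$), and for $i\to-\infty$ the condition $|a_i|\to 0$ imposes no rate of decay, so $|a_i|\rho^i$ need not tend to $0$ for any $\rho<1$ either. In other words, as is stated explicitly at the beginning of the subsection containing Proposition \ref{division of the problem over Amice}, there is \emph{no} domain of the affine line on which all series of $\mathcal{E}_K$ converge; a general $a\in\mathcal{E}_K$ is not an analytic function on any annulus, so Theorem \ref{Thm : Motzkin annulus} cannot be applied to it on $[r_j,1]$ (or on any other interval), and there are no decompositions $a=\lambda_jT^{N_j}a_j^-a_j^+$ to glue. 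The ``swelling annulus'' picture never gets off the ground.

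The actual proof must therefore proceed by approximation in the Gauss norm rather than by restriction to annuli: one first observes that the decomposition does hold for elements of $\Ed_K=\mathfrak{R}_K\cap\mathcal{E}_K$, since those \emph{do} converge on some annulus $]1-\varepsilon,1[$ and Theorem \ref{Thm : Motzkin annulus} applies there; then one uses that $\Ed_K$ is dense in $\mathcal{E}_K$ for $|\cdot|_1$ and reruns the Cauchy-sequence estimates from the proof of Theorem \ref{Thm : Motzkin annulus} (the inequalities bounding $|\lambda_n-\lambda_m|$, $\|h_n^+-h_m^+\|$, $\|h_n^--h_m^-\|$ by $\|a_n-a_m\|$) with $\|\cdot\|_I$ replaced by the Gauss norm, so that the approximating decompositions converge. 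Note also that the role of the hypothesis that $K$ is discretely valued is not, as you suggest, merely to make $\mathcal{E}_K$ a field: it is needed to guarantee that $\inf\{i:|b_i|=|a|_1\}$ is actually attained (is $>-\infty$... more precisely is not $+\infty$ after normalization), i.e.\ that the integer $N$ exists at all; without discreteness the supremum $\sup_i|b_i|$ need not be achieved by any coefficient. Your closing observation that $a^-=\lambda^{-1}T^{-N}a\,(a^+)^{-1}\in\mathcal{E}_K$ is a reasonable way to control the negative tail \emph{once the decomposition exists}, but it cannot substitute for the missing construction.
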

\begin{proof}
The claim can not be deduced immediately ``by density'' 
because rational fractions are not dense in 
$\mathcal{E}_K$ with respect to the Gauss norm 
$|.|_1$. 
However the claim holds for functions in $\Ed_K$ 
because they converge on some 
annulus.\footnote{Actually rational fractions are dense
in $\Ed_K$ with respect to the $\mathcal{LF}$ topology 
induced by the Robba ring $\mathfrak{R}_K$.}
Now $\Ed_K$ is dense in
$\mathcal{E}_K$ with respect to the Gauss norm. 

The assumption $K$ discretely valued arises now to 
prove that 
$\inf\{i\in\mathbb{Z},
\textrm{ such that }|b_i|=|a(T)|_1\}$ 
is not equal to $+\infty$. This guarantee the existence of 
$N<+\infty$. 
We can now reproduce the same proof as 
Theorem \ref{Thm : Motzkin annulus} replacing 
$\|.\|_I$ by the Gauss norm $|.|_1$. We obtain the 
desired decomposition.
\end{proof}
\begin{remark}
As already mentioned, if the functions converge in some 
appropriate domains, the above results extend to 
matrices \cite{Ch-Motz}, \cite[Thm.6.5]{Astx}. 
We do not know whether such a generalization exists for 
matrices with coefficients in $\mathcal{E}_K$. 
The main applications from our point of view would be 
the study of differential equations with coefficients in 
that ring.
\end{remark}
\section{Criterion of solvability for $q$-difference equations over $\mathcal{E}_K$}
\label{q-crit of solv}
\begin{hypothesis}\label{K is discrete valued}
From now on the valuation on $K$ will be discrete valuation, in
order to have theorem \ref{motzkin}.
\end{hypothesis}

We denote by
\begin{eqnarray}
&&\sigma_q:f(T)\mapsto
f(qT)\;,\qquad\partial_T:=T\frac{d}{dT}\;,\\
&&d_q:=\frac{\sigma_q-1}{(q-1)T}\;,\;\;\quad\qquad\Delta_q:=\frac{\sigma_q-1}{q-1}
\;.
\end{eqnarray}
Let $A$ be one of the rings $\mathfrak{R}_K$, 
$\mathcal{E}_K$, $\Ed_K$, $\a_K(I)$.
A $q$-difference equation is finite free $A$-module $M$
together with an automorphism $\sigma_q:M\simto M$ 
satisfying $\sigma_q(am)=\sigma_q(a)\sigma_q(m)$ for 
all $a\in A$, $m\in M$. This corresponds in a basis of 
$M$ to an expression of 
the form $\sigma_q(Y)=a(q,T)Y$, where 
$a(q,T)\in GL_n(A)$.

From the action of  $\sigma_q:
M\simto M$ we can define the action of $d_q$ and 
$\Delta_q$ on $M$. In a basis of $M$ the action of 
$d_q$ amounts to an equation of the form 
$d_q(Y)=g_{[1]}(q,T)Y$, with 
$g_{[1]}(q,T)=\frac{a(q,T)-\mathrm{Id}}{q-1}\in M_n(A)$. As for differential equations 
we can attribute to such a module a radius of 
convergence. Namely the formal solution is given by
\begin{equation}
Y_q(T,t)\;:=\;\sum_{s\geq 0}g_{[s]}(q,t)\cdot
\frac{(T-t)_{q,s}}{[s]_q^!}
\end{equation}
where for all natural $n\geq 0$
\begin{equation}
[n]^!_q\;:=\; 
\frac{\prod_{i=1}^n(q^i-1)}{(q-1)^n}
\end{equation}
is the $q-$factorial, and $(T-t)_{q,s}:=
(T-t)(T-qt)\cdots(T-q^{s-1}t)$ 
(cf. \cite{Pu-q-Diff} for more 
details), and $g_{[s]}$ is the matrix of the action of 
$d_q^n$ on $M$. Namely $g_[0]=\mathrm{Id}$, 
$g_{[1]}=\frac{a(q,T)-1}{(q-1)T}$, and for all 
$s\geq 2$ one has $g_{[s+1]}(q,T)=
d_q(g_{[s]}(q,T))+\sigma_q(g_{[s]}(q,T))\cdot 
g_{[1]}(q,T)$.

The radius of convergence of $d_q-g_{[1]}$ 
is then defined as
\begin{equation}
Ray(d_q-g_{[1]}(q,T),\rho)\;:=\;
\min(\liminf_s(|g_{[s]}(q,T)|_\rho/[s]_q^!)^{-1/s},\rho)
\end{equation}
This number is attached to the operator $d_q-g_{[1]}$, 
but it is not invariant by base changes of $M$.
The radius is always less than or equal to $\rho$, 
if it is equal to $\rho$ we say that $\sigma_q-a(q,T)$ is 
\emph{solvable} at $\rho$. If $A=\mathcal{E}_K$ and $\rho=1$ we simply say 
\emph{solvable} (without specifying $\rho=1$). 
If $A=\mathfrak{R}_K$, we say that the equation is 
solvable if 
$\lim_{\rho\to 1^-}Ray(\sigma_q-a(q,T),\rho)=1$.

\subsection{Preliminary lemmas}
\begin{lemma}\label{limit of q-factorial}
Assume $|q-1|<1$. 
Then the sequence $|[n]^!_q|^{1/n}$ converges to a 
real number strictly less than $1$, we 
call $\omega_q < 1$ that number.
Moreover, let $\kappa$ be the smallest integer such that
$|q^\kappa-1|<\omega$, then
$$\omega_q=\left\{\begin{array}{ccl}
\omega &\textrm{ if }& \kappa=1\;,\\
(|\frac{q^\kappa-1}{q-1}|\cdot\omega)^{\frac{1}{\kappa}} &\textrm{
if }& \kappa\geq 2\;.
\end{array}\right.$$
\end{lemma}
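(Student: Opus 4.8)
The plan is to compute $|[n]^!_q|^{1/n}$ explicitly using the product formula $[n]^!_q = \prod_{i=1}^n (q^i-1)/(q-1)^n$, by controlling $|q^i - 1|$ for each $i$ via Lemma \ref{valuation of q^d-1}. Write $i = \alpha p^{m}$ with $(\alpha,p)=1$; then that lemma (applied with the appropriate $j$ determined by which ``dyadic'' band $|q-1|$ falls into) gives $|q^i-1| = |p^{m-\min(m,j)}|\cdot|q-1|^{p^{\min(m,j)}}$. Summing logarithms over $i = 1,\dots,n$ and dividing by $n$, one gets that $\log|[n]^!_q|^{1/n}$ converges (as $n\to\infty$) to a limit, because the averaged contribution stabilizes — this is the same kind of Cesàro-type computation that shows $|n!|^{1/n}\to\omega$, only now with the extra twist coming from the $p$-adic valuations of the $q^i-1$. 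So the existence of $\omega_q < 1$ follows from a direct asymptotic estimate, and $\omega_q<1$ because each factor $|q^i-1|\leq \max(|q-1|,|i|) < 1$ when $|q-1|<1$.

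Next I would identify the limit. When $\kappa = 1$, i.e. $|q-1|<\omega$ already, Lemma \ref{valuation of q^d-1} with $j=0$ gives $|q^i - 1| = |i|\cdot|q-1|$ for all $i$, so $|[n]^!_q| = \prod_{i=1}^n |i|\cdot|q-1|^n / |q-1|^n = |n!|$, whence $\omega_q = \lim|n!|^{1/n} = \omega$. This is the easy case. When $\kappa\geq 2$, set $j$ so that $\omega^{1/p^{j-1}} < |q-1| < \omega^{1/p^j}$ — this $j$ is exactly $j = v_p(\kappa)$ hmm, more precisely $\kappa = p^{j}$ when $|q-1|$ sits in that band, so $j = \log_p\kappa$; I should double check that $\kappa$ is forced to be a power of $p$, which follows since $|q^\kappa - 1|$ first drops below $\omega$ precisely when we raise to a $p$-th power enough times, so $\kappa = p^j$. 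Then for $i = \alpha p^m$, $|q^i-1| = |p^{m - \min(m,j)}|\,|q-1|^{p^{\min(m,j)}}$, and the geometry is: the factor $|q^i-1|/|i| = |q^i-1|\cdot|p^{-m}|$ (since $|i| = |p^m|$) equals $|q-1|^{p^{\min(m,j)}}/|p|^{\min(m,j)}$, which for $m\geq j$ is the constant $|q-1|^{p^j}/|p|^j = (|(q^\kappa-1)/(q-1)|\cdot|q-1|)\cdot\ldots$; comparing with $|q^\kappa - 1| = |(q^\kappa-1)/(q-1)|\cdot|q-1|$ and $\kappa = p^j$, one gets $|q-1|^{p^j}/|p|^j = |q^\kappa - 1|/|p|^j$. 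The upshot after Cesàro-averaging: $\omega_q^\kappa = |(q^\kappa-1)/(q-1)|\cdot\omega$, i.e. $\omega_q = (|(q^\kappa-1)/(q-1)|\cdot\omega)^{1/\kappa}$, as claimed.

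Concretely the cleanest route is probably: group the indices $i\in\{1,\dots,n\}$ by their $p$-adic valuation $m = v_p(i)$. There are about $n/p^m \cdot (1-1/p)$ indices with $v_p(i) = m$ exactly (for $m$ up to $\log_p n$), and for those, once $m\geq j$, the factor $|q^i - 1| = |p^{m-j}|\,|q-1|^{p^j}$. One then shows $\frac1n\sum_{i=1}^n \log|q^i-1|$ converges, and separately $\frac1n\sum \log|q-1| = \log|q-1|$ trivially, so $\frac1n\log|[n]^!_q|$ converges; the arithmetic of the limit reduces to a geometric series in $1/p$ that telescopes to the stated closed form. The main obstacle I anticipate is bookkeeping the contributions of the indices with small $v_p(i)$ (i.e. $m < j$), where the formula $|q^i-1| = |p^{m-\min(m,j)}||q-1|^{p^m}$ has a genuinely $m$-dependent exponent $p^m$ on $|q-1|$ rather than the stabilized $p^j$; one must check these ``boundary'' terms contribute a vanishing fraction after dividing by $n$, which they do since their total count is $O(n/p)$ relative to... no — actually the indices with $v_p(i) < j$ are a \emph{positive} fraction of all indices, so their contribution does \emph{not} vanish, and the real content is showing the full weighted average still collapses to $\omega_q^\kappa = |(q^\kappa-1)/(q-1)|\,\omega$. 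I would handle this by writing the limit as an explicit convergent series $\sum_{m\geq 0}(1-1/p)p^{-m}\cdot(\text{per-band log-contribution})$ and verifying it sums to $\frac1\kappa\log(|(q^\kappa-1)/(q-1)|\omega)$ by direct manipulation, using $\kappa = p^j$ and $\log\omega = \frac{1}{p-1}\log|p|$.
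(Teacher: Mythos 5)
Your strategy --- grouping the indices $i\le n$ by $m=v_p(i)$, using Lemma \ref{valuation of q^d-1} to write $|q^i-1|=|p|^{m-\min(m,j)}|q-1|^{p^{\min(m,j)}}$, and Ces\`aro--averaging --- is the natural one (the paper itself only cites \cite{DV-Dwork} here, so an actual computation is welcome), and it does yield the existence of the limit, the inequality $\omega_q<1$, the identity $\kappa=p^j$, and the case $\kappa=1$. The problem is the one step you explicitly defer at the end (``verifying it sums to $\frac1\kappa\log(|\frac{q^\kappa-1}{q-1}|\omega)$ by direct manipulation''): that step is the entire content of the second half of the lemma, and the ``boundary'' indices you were (rightly) worried about are exactly where it breaks. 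Carrying out the average: the indices with $v_p(i)=m$ have density $(1-\frac1p)p^{-m}$ and each contributes $p^{\min(m,j)}\log|q-1|+(m-j)^{+}\log|p|$ to $\log|q^i-1|$, so after subtracting $\log|q-1|$ one finds
\begin{equation*}
\lim_n\tfrac1n\log|[n]^!_q|\;=\;j\,\tfrac{p-1}{p}\,\log|q-1|\;+\;\tfrac{1}{p^{j}}\log\omega\,,
\end{equation*}
whereas the displayed closed form, using $\kappa=p^j$ and $|q^{p^j}-1|=|q-1|^{p^j}$, equals $\tfrac{p^j-1}{p^j}\log|q-1|+\tfrac{1}{p^j}\log\omega$. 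The coefficients $j\tfrac{p-1}{p}$ and $\tfrac{p^j-1}{p^j}$ agree only for $j\le 1$.

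So the ``direct manipulation'' you postpone does not close; it produces a different answer once $\omega^{1/p}<|q-1|<1$. A fully explicit check: take $p=2$ and $v(q-1)=3/8$ (normalizing $v(2)=1$), so $j=2$ and $\kappa=4$. Then $v([i]_q)$ equals $0,\;3/8,\;9/8,\;17/8,\;25/8,\dots$ according as $v_2(i)=0,1,2,3,4,\dots$, and one computes $v([8]^!_q)/8=1/2$, $v([16]^!_q)/16=9/16$, $v([32]^!_q)/32=19/32$, converging to $5/8$, which matches $j\frac{p-1}{p}v(q-1)+p^{-j}v(\omega)=\frac38+\frac14$, while the stated formula predicts $\frac14\bigl(\frac32-\frac38+1\bigr)=\frac{17}{32}$. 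The source of the discrepancy is precisely the set of indices with $0<v_p(i)<j$: for those $|[i]_q|=|q-1|^{p^{v_p(i)}-1}<1$, they have positive density, and the closed form $(|[\kappa]_q|\,\omega)^{1/\kappa}$ accounts only for the indices divisible by $\kappa$ (via $\prod_{\kappa\mid i,\,i\le n}[i]_q=[\kappa]_q^{\,n/\kappa}\cdot[n/\kappa]^!_{q^\kappa}$) as if all the remaining factors were units --- which is true when $\kappa\in\{1,p\}$ but false for $\kappa\ge p^2$. So either your sketch hides the same slip, or the statement as reproduced from \cite{DV-Dwork} requires the additional restriction $|q-1|<\omega^{1/p}$; in either case the identification of the limit with the displayed formula is not established by your argument and must be confronted explicitly rather than asserted.
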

\begin{proof} \cite[3.5]{DV-Dwork}. \end{proof}

\begin{lemma}\label{d_q(f) leq k! f}
Let $|q-1|<1$. For all $f(T)\in\a_K(I)$, for all
$\rho\in I$ and all $k\geq 1$, we have
$|\frac{d_q^k}{[k]^!_q}(f)|_\rho\leq
\rho^{-k}|f|_\rho$. The same
result is true for $f\in\mathcal{E}_K$ and $\rho=1$.
\end{lemma}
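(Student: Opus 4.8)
The plan is to reduce everything to the estimate $|d_q^k(f)|_\rho\leq |[k]^!_q|\,\rho^{-k}|f|_\rho$, which is exactly the content of the desired inequality after dividing by $|[k]^!_q|$. The natural first step is the base case $k=1$: here $d_q(f)=\frac{\sigma_q(f)-f}{(q-1)T}$, and I would argue that $|\sigma_q(f)-f|_\rho\leq |q-1|\,|f|_\rho$. Write $f=\sum_i b_iT^i$; then $\sigma_q(f)-f=\sum_i b_i(q^i-1)T^i$, and since $|q-1|<1$ we have $|q^i-1|\leq |q-1|$ for every $i\in\mathbb Z$ (this follows from $q^i-1=(q-1)(q^{i-1}+\cdots+1)$ for $i\geq 1$, from $q^{-i}-1=-q^{-i}(q^i-1)$ for $i<0$, and $|q|=1$). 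Hence $|\sigma_q(f)-f|_\rho=\sup_i|b_i||q^i-1|\rho^i\leq |q-1|\sup_i|b_i|\rho^i=|q-1|\,|f|_\rho$, and dividing by $|(q-1)T|$, whose $\rho$-norm is $|q-1|\rho$, gives $|d_q(f)|_\rho\leq\rho^{-1}|f|_\rho$, i.e.\ $d_q$ has operator norm $\leq\rho^{-1}$ on $(\a_K(I),|\cdot|_\rho)$.

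Next I would bootstrap to general $k$. The cleanest route is to observe that $d_q$ maps $\a_K(I)$ to itself with $|d_q|_\rho\leq\rho^{-1}$, hence $|d_q^k(f)|_\rho\leq\rho^{-k}|f|_\rho$ trivially; but that loses the factor $|[k]^!_q|$, which actually makes the claimed inequality \emph{weaker} rather than stronger since $|[k]^!_q|<1$ by Lemma \ref{limit of q-factorial}. Wait --- the stated inequality is $\bigl|\tfrac{d_q^k}{[k]^!_q}(f)\bigr|_\rho\leq\rho^{-k}|f|_\rho$, i.e.\ $|d_q^k(f)|_\rho\leq |[k]^!_q|\,\rho^{-k}|f|_\rho$, which is \emph{stronger} than the naive bound. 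So the crude submultiplicative argument is not enough; one must exploit cancellation in the iterated $q$-difference. The right tool is the closed formula for $d_q^k$ as a $q$-difference of order $k$: one has the $q$-Leibniz/Jackson identity
\begin{equation}
d_q^k(f)(T)\;=\;\frac{1}{\prod_{i=0}^{k-1}(q^i-1)(q-1)^{\,?}\,T^k}\sum_{j=0}^{k}(-1)^j\genfrac[]{0pt}{}{k}{j}_{\!q}\,q^{\binom{j}{2}}f(q^{k-j}T)\;,
\end{equation}
more precisely the standard formula expressing $\bigl(\tfrac{\sigma_q-1}{q-1}\bigr)^k$ via $q$-binomial coefficients, which after dividing by $[k]^!_q$ has all coefficients of $\rho$-norm $\leq 1$; then for $f=\sum b_iT^i$ the $T^i$-coefficient of $\tfrac{d_q^k}{[k]^!_q}(f)$ is $b_i$ times a universal factor of norm $\leq\rho^{-k}$ (the $q^i$-analogue of $\binom{i}{k}$ divided into $[k]^!_q$), giving the bound termwise.

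The main obstacle I anticipate is pinning down this universal factor and checking its norm is $\leq 1$ after dividing by $[k]^!_q$: concretely the $T^i$-coefficient of $d_q^k(f)$ equals $b_i\cdot[i]_q[i-1]_q\cdots[i-k+1]_q\,T^{i-k}$ where $[j]_q:=\frac{q^j-1}{q-1}$, so $\tfrac{d_q^k}{[k]^!_q}(f)$ has $T^{i-k}$-coefficient $b_i\genfrac[]{0pt}{}{i}{k}_{\!q}$, and the claim reduces to the integrality statement $\bigl|\genfrac[]{0pt}{}{i}{k}_{\!q}\bigr|\leq 1$ for the $q$-binomial coefficient, valid for all $i\in\mathbb Z$, $k\geq 0$ when $|q-1|<1$. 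This in turn follows from the $q$-Pascal recursion $\genfrac[]{0pt}{}{i}{k}_{\!q}=\genfrac[]{0pt}{}{i-1}{k-1}_{\!q}+q^k\genfrac[]{0pt}{}{i-1}{k}_{\!q}$ together with $|q^k|=1$, by induction on $k$ (and separately handling negative $i$ via $\genfrac[]{0pt}{}{-i}{k}_{\!q}=(-1)^k q^{-ik-\binom{k}{2}}\genfrac[]{0pt}{}{i+k-1}{k}_{\!q}$). Finally the passage from $\a_K(I)$ to $\mathcal E_K$ at $\rho=1$ is immediate: the same termwise bound $|b_i\genfrac[]{0pt}{}{i}{k}_{\!q}|\leq|b_i|$ shows $\tfrac{d_q^k}{[k]^!_q}$ preserves $\mathcal E_K$ with Gauss-norm $\leq 1=\rho^{-k}$, using that the coefficients of a series in $\mathcal E_K$ are bounded and tend to $0$ as $i\to-\infty$, a property preserved since we only shift the index by $-k$. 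I would present the argument in this order: (1) the $q$-Pascal/integrality lemma for $q$-binomials; (2) the closed formula for the $T^i$-coefficient of $d_q^k(f)$; (3) read off the norm bound termwise over $\a_K(I)$; (4) specialize $\rho=1$ and extend to $\mathcal E_K$.
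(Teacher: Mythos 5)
Your final argument is correct, and it is essentially the standard proof of this estimate; the paper itself gives no proof but simply cites \cite[2.1]{DV-Dwork}, so you are supplying the content that the paper delegates to the reference. The mechanism you isolate is the right one: $\sigma_q$ acts diagonally on monomials, so $d_q^k(T^i)=[i]_q[i-1]_q\cdots[i-k+1]_q\,T^{i-k}$ with $[j]_q:=\frac{q^j-1}{q-1}$, hence $\frac{d_q^k}{[k]^!_q}(T^i)$ is $T^{i-k}$ times a Gaussian binomial, and since $|q-1|<1$ forces $|q|=1$, the integrality $\bigl|\binom{i}{k}_q\bigr|\leq 1$ (for $i\geq k$ because it is a polynomial in $q$ with integer coefficients, for $0\leq i<k$ because it vanishes, and for $i<0$ via the reflection identity you state, which checks out) gives the bound coefficient by coefficient; as $|\cdot|_\rho$ is the sup of termwise norms this is legitimate, and the $\mathcal{E}_K$ case at $\rho=1$ follows since multiplying coefficients by elements of norm $\leq 1$ and shifting the index by $-k$ preserves boundedness and the condition $\lim_{i\to-\infty}|b_i|=0$. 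Two small remarks: the displayed ``$q$-Leibniz/Jackson identity'' with the unresolved exponent is a dead end that you rightly abandon, and should simply be deleted from a final write-up; and your opening observation that the crude operator-norm bound $|d_q|_\rho\leq\rho^{-1}$ only gives $|d_q^k(f)|_\rho\leq\rho^{-k}|f|_\rho$, which is strictly weaker than the claim because $|[k]^!_q|<1$, is exactly the right diagnosis of why the termwise computation is needed.
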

\begin{proof}\cite[2.1]{DV-Dwork}.\end{proof}
\begin{lemma}[$q$-Small Radius, $q-$analogue of 
Lemma \ref{small radius2}]
\label{q-Young}
Let $q\in K$, $|q-1|<1$, and let 
$I\subseteq\mathbb{R}_{\geq 0}$ be any interval. 
Let $\sigma_q-a(q,T)$, $a(q,T)\in\a_K(I)$ be some 
rank one $q$-difference equation. Let
$R_\rho:=Ray(\sigma_q-a(q,T),\rho)$  be the radius of convergence
of the equation at $\rho\in I$. Then
\begin{equation}
\label{R_rho geq ....}
R_\rho\geq\frac{\omega_q}{\max(|g_{[1]}(q,T)|_\rho,\rho^{-1})}=\frac{\omega_q\cdot\rho\cdot|q-1|}{\max(|a(q,T)-1|_\rho,|q-1|)}
\end{equation}
Moreover $R_\rho<\omega_q\cdot\rho$ if and only if
$|a(q,T)-1|_\rho>|q-1|$, and in this case 
\begin{equation}\label{R_rho = .... small radius}
R_\rho = \frac{\omega_q\cdot \rho\cdot |q-1|}{|a(q,T)-1|_\rho}\;.
\end{equation}
The same assertions hold for solvable $q$-difference equations
over $\mathcal{E}_K$, with $\rho=1$.
\end{lemma}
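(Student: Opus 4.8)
The plan is to mimic the proof of the classical ``small radius'' Lemma \ref{small radius2} (i.e.\ \cite[Lemma 1.1]{Rk1}), replacing the derivation $\partial_T$ and the factorial $k!$ by their $q$-analogues $d_q$ and $[k]^!_q$, and using the numerical estimates of Section \ref{numerical lemmas} in place of the classical estimates $|k!|^{1/k}\to\omega$. I will first establish the lower bound \eqref{R_rho geq ....}, then analyse when it is sharp.

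\medskip
\noindent\textbf{Step 1 (the universal lower bound).}
First I would recall the recursion $g_{[s+1]}(q,T)=d_q(g_{[s]}(q,T))+\sigma_q(g_{[s]}(q,T))\cdot g_{[1]}(q,T)$. Since $\sigma_q$ is an isometry for $|\cdot|_\rho$ on $\a_K(I)$ (because $|qT|=\rho=|T|$, as $|q|=1$ when $|q-1|<1$), and since by Lemma \ref{d_q(f) leq k! f} the operator $d_q^k/[k]^!_q$ has operator norm $\le \rho^{-k}$ on $\a_K(I)$, an easy induction on $s$ gives
\begin{equation}
\frac{|g_{[s]}(q,T)|_\rho}{|[s]^!_q|}\;\le\;\max\bigl(|g_{[1]}(q,T)|_\rho,\rho^{-1}\bigr)^{\,s}\;.
\end{equation}
Taking $s$-th roots, $\liminf$, dividing by $|[s]^!_q|^{1/s}\to\omega_q$ (Lemma \ref{limit of q-factorial}) and intersecting with the trivial bound $R_\rho\le\rho$ yields
\begin{equation}
R_\rho\;\ge\;\min\Bigl(\rho\,,\,\frac{\omega_q}{\max(|g_{[1]}(q,T)|_\rho,\rho^{-1})}\Bigr)\;=\;\frac{\omega_q}{\max(|g_{[1]}(q,T)|_\rho,\rho^{-1})}\;,
\end{equation}
the last equality because $\omega_q<1$ forces $\omega_q/\max(\cdots)\le \omega_q\rho<\rho$. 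Rewriting $g_{[1]}(q,T)=\frac{a(q,T)-1}{(q-1)T}$ gives the second form in \eqref{R_rho geq ....}.

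\medskip
\noindent\textbf{Step 2 (sharpness in the small-radius regime).}
Assume $|a(q,T)-1|_\rho>|q-1|$, i.e.\ $|g_{[1]}(q,T)|_\rho>\rho^{-1}$. Passing to a $\rho$-generic point $t_\rho$ in a complete extension $\Omega/K$ (so that $|\cdot|_\rho$ becomes an honest absolute value $|\cdot|_\Omega$ at $t_\rho$, as in Section 1), the claim reduces to a scalar computation: one must show $|g_{[s]}(q,t_\rho)|=|g_{[1]}(q,t_\rho)|^s$ for all $s$. Here the recursion $g_{[s+1]}=d_q(g_{[s]})+\sigma_q(g_{[s]})g_{[1]}$ shows that the second term has norm $|g_{[s]}|\cdot|g_{[1]}|$ while the first term, being a $d_q$-derivative, is smaller by a factor $\rho^{-1}<|g_{[1]}|_\rho$ (again using Lemma \ref{d_q(f) leq k! f}); so the two terms have distinct norms and the non-archimedean equality in the recursion gives $|g_{[s+1]}|=|g_{[s]}|\cdot|g_{[1]}|$ by induction. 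Hence $\liminf_s(|g_{[s]}|_\rho/|[s]^!_q|)^{-1/s}=\omega_q/|g_{[1]}(q,T)|_\rho=\frac{\omega_q\rho|q-1|}{|a(q,T)-1|_\rho}$, and since $|g_{[1]}(q,T)|_\rho>\rho^{-1}$ this value is $<\omega_q\rho\le\rho$, so it equals $R_\rho$, proving \eqref{R_rho = .... small radius}. Conversely, if $|a(q,T)-1|_\rho\le|q-1|$ then the bound of Step 1 gives $R_\rho\ge\omega_q\rho$, so $R_\rho<\omega_q\rho$ forces $|a(q,T)-1|_\rho>|q-1|$; this is the ``if and only if''.

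\medskip
\noindent\textbf{Step 3 (the $\mathcal{E}_K$ case).}
For a solvable $q$-difference equation $\sigma_q-a(q,T)$ over $\mathcal{E}_K$ with $\rho=1$, the same arguments apply verbatim: Lemma \ref{d_q(f) leq k! f} holds for $f\in\mathcal{E}_K$ with $\rho=1$, $\sigma_q$ is an isometry for the Gauss norm $|\cdot|_1$, and there is a $1$-generic point available by the very definition of the radius (Remark \ref{remark : properties}, item i). So Steps 1 and 2 carry over, giving \eqref{R_rho geq ....} and \eqref{R_rho = .... small radius} at $\rho=1$. The one point to check is that at $\rho=1$ the ``$d_q$-derivative is strictly smaller'' argument still separates norms: this is exactly the hypothesis $|a(q,T)-1|_1>|q-1|$, i.e.\ $|g_{[1]}(q,T)|_1>1$, which is the condition appearing in the statement.

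\medskip
\noindent\textbf{Expected main obstacle.}
The routine part is Step 1. The delicate point is Step 2: proving that the generic-point norms multiply exactly, $|g_{[s]}|_\rho=|g_{[1]}|_\rho^s$, which requires that the $d_q$-derivative term in the recursion be \emph{strictly} dominated. Over $\a_K(I)$ with $I$ a genuine interval one could also argue by $\log$-affinity and continuity (as in the differential case of Remark \ref{remark : properties}), but over $\mathcal{E}_K$ at the single point $\rho=1$ that tool is unavailable, so one must run the direct norm-separation induction, and checking that Lemma \ref{d_q(f) leq k! f} indeed gives the \emph{strict} inequality $\rho^{-1}<|g_{[1]}(q,T)|_\rho$ needed at each step is where the hypothesis $|a(q,T)-1|_\rho>|q-1|$ is essential. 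Since all of this is a close transcription of the proof of \cite[Lemma 1.1]{Rk1} with $(k!,\partial_T)$ replaced by $([k]^!_q,d_q)$ and $\omega$ replaced by $\omega_q$, one can reasonably abbreviate it by citing that lemma together with Lemmas \ref{limit of q-factorial} and \ref{d_q(f) leq k! f}.
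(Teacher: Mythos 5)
Your proof is correct and follows essentially the same route as the paper's: the inductive bound $|g_{[s]}|_\rho\le\max(|g_{[1]}|_\rho,\rho^{-1})^s$ obtained from the recursion $g_{[s+1]}=d_q(g_{[s]})+\sigma_q(g_{[s]})g_{[1]}$ together with Lemma \ref{d_q(f) leq k! f}, equality by ultrametric norm separation when $|g_{[1]}|_\rho>\rho^{-1}$, and the converse read off from the lower bound. One slip to fix: the displayed inequality in Step 1 should not carry the factor $1/|[s]^!_q|$ on the left-hand side --- the easy induction yields $|g_{[s]}|_\rho\le\max(|g_{[1]}|_\rho,\rho^{-1})^s$ without it, and the $q$-factorial enters only afterwards when you divide by $|[s]^!_q|^{1/s}\to\omega_q$, exactly as your own following sentence describes.
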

\begin{proof} Let $g_{[s]}(q,T)\in\a_K(I)$ be the $1\times 1$ matrix of
$(d_q)^s$. By definition
\begin{eqnarray}\label{q-radius explicit formula}
Ray(d_q-g_{[1]}(q,T),1)&=&
\min\bigl(\rho\;,\;\liminf_s(|g_{[s]}(q,T)|_1/|[s]^!_q|)^{-\frac{1}{s}}\bigr)\nonumber\\
&=&\min\bigl(\rho\;,\;\omega_q\cdot\liminf_s(|g_{[s]}(q,T)|_1)^{-\frac{1}{s}}\bigr)\;.\quad\qquad
\end{eqnarray}
One has inductively $|g_{[s]}|_\rho \leq
\max(|g_{[1]}|_\rho,\rho^{-1})^s$, this shows \eqref{R_rho geq
....}. Moreover, if $|g_{[1]}|_\rho>\rho^{-1}$, then
$|g_{[s]}|_\rho = \max(|g_{[1]}|_\rho,\rho^{-1})^s$ and \eqref{R_rho
= .... small radius} holds. Reciprocally, if
$R_\rho<\omega_q\cdot\rho$, then, by \eqref{R_rho geq ....}, one has
$|a(q,T)-1|> |q-1|$. \end{proof}

\begin{lemma}\label{lambda=1 and N=0}
Let $|q-1|<1$. Let $\sigma_q-a(q,T)$ be a
rank one \emph{solvable} equation such that $a(q,T)\in\mathfrak{R}_K$ or
$a(q,T)\in\mathcal{E}_K$. Let $a(q,T)=\lambda_q
T^{N}a^-(q,T)a^+(q,T)$ be the Motzkin decomposition of $a(q,T)$
(cf. Theorems \ref{Thm : Motzkin annulus}, \ref{motzkin}), then $N=0$ and $|\lambda_q-1|<1$.
\end{lemma}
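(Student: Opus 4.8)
The statement asserts that for a solvable rank-one $q$-difference equation $\sigma_q-a(q,T)$ with $a(q,T)\in\mathfrak{R}_K$ or $\mathcal{E}_K$, the Motzkin decomposition $a(q,T)=\lambda_q T^N a^-(q,T)a^+(q,T)$ has $N=0$ and $|\lambda_q-1|<1$. The plan is to use the $q$-small radius Lemma \ref{q-Young} to relate solvability to a size condition on $a(q,T)$, and then to analyze the contributions of the three factors $\lambda_q$, $T^N$, $a^\pm(q,T)$ to the norms $|a(q,T)-1|_\rho$.

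First I would recall from Lemma \ref{c_0=1} that $|a^-(q,T)a^+(q,T)-1|_\rho<1$ for all relevant $\rho$ (in $\overline I$ for the annulus case, and at $\rho=1$ in the $\mathcal{E}_K$ case), so writing $b(q,T):=a^-(q,T)a^+(q,T)=1+c(q,T)$ with $|c(q,T)|_\rho<1$, we get $a(q,T)=\lambda_q T^N(1+c(q,T))$ and hence $|a(q,T)|_\rho=|\lambda_q|\rho^N$. Now I would use solvability: in the $\mathcal{E}_K$ case, solvability at $\rho=1$ means $R_1=1>\omega_q$, so by Lemma \ref{q-Young} we must have $|a(q,T)-1|_1\leq|q-1|<1$; in the $\mathfrak{R}_K$ case, $Ray(\sigma_q-a,\rho)\to 1$ as $\rho\to 1^-$, so for $\rho$ close enough to $1$ we again have $R_\rho>\omega_q\rho$, forcing $|a(q,T)-1|_\rho\leq|q-1|<1$ for those $\rho$. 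In either situation, $|a(q,T)-1|_\rho<1$ on a set of $\rho$ with a limit point, hence $|a(q,T)|_\rho=1$ there.

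Next, from $|a(q,T)|_\rho=|\lambda_q|\rho^N=1$ on an interval (or, in the $\mathcal{E}_K$ case, at $\rho=1$ — but there the $\mathfrak{R}_K\cap\mathcal{E}_K$ reasoning, or directly the fact that $a\in\mathcal{E}_K$ solvable implies $a\in\Ed_K$ by the small radius lemma, lets one work on an annulus), the $\log$-affinity of $\rho\mapsto\log|a(q,T)|_\rho$ with slope $N$ gives $N=0$ and $|\lambda_q|=1$. With $N=0$ in hand, $a(q,T)=\lambda_q(1+c(q,T))$ with $|c(q,T)|_\rho<1$, so $a(q,T)-\lambda_q=\lambda_q c(q,T)$ has norm $<1$, i.e. $|a(q,T)-\lambda_q|_\rho<1$. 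Combined with $|a(q,T)-1|_\rho<1$ (from solvability) and the ultrametric inequality, this yields $|\lambda_q-1|_\rho\leq\max(|\lambda_q-a(q,T)|_\rho,|a(q,T)-1|_\rho)<1$, which is the second assertion since $\lambda_q-1$ is a constant.

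The main obstacle is the bookkeeping between the two cases: for $a(q,T)\in\mathcal{E}_K$ one only has the Gauss norm $|\cdot|_1$ a priori, and the $\log$-affinity argument that forces $N=0$ genuinely wants an interval of $\rho$'s; the clean way around this is to first observe, via the $q$-small radius lemma, that a solvable $q$-difference equation over $\mathcal{E}_K$ actually has $a(q,T)\in\Ed_K$ (since $|a(q,T)-1|_1<1$ forces all coefficients of $a(q,T)-1$ to have absolute value $<1$, and together with $\lim_{i\to-\infty}|a_i|=0$ one gets convergence on an annulus $]1-\varepsilon,1[$), so that Theorem \ref{Thm : Motzkin annulus} applies on a genuine interval and the argument above goes through uniformly. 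I would make sure to state this reduction explicitly at the start so that both cases are handled by the same annulus-based reasoning.
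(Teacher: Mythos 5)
Your argument for the case $a(q,T)\in\mathfrak{R}_K$ is correct and is essentially a norm-theoretic reformulation of the paper's proof: solvability plus Lemma \ref{q-Young} gives $|a-1|_\rho\leq|q-1|<1$ on an interval, Lemma \ref{c_0=1} gives $|a^-a^+|_\rho=1$, so $|\lambda_q|\rho^N=1$ on an interval forces $N=0$ and $|\lambda_q|=1$, and the ultrametric inequality finishes. The problem is the $\mathcal{E}_K$ case, where your proposed reduction is false. You claim that $|a(q,T)-1|_1<1$ (coefficientwise bounds) together with $\lim_{i\to-\infty}|a_i|=0$ implies $a(q,T)\in\Ed_K$, i.e.\ convergence on some annulus $]1-\varepsilon,1[$. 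This is not so: the element $\sum_{n\geq 1}p^{\lfloor\sqrt{n}\rfloor}T^{-n}$ lies in $\mathcal{E}_{\mathbb{Q}_p}$, has all coefficients of norm $<1$ tending to $0$, yet $|p|^{\sqrt{n}}\rho^{-n}\to\infty$ for every $\rho<1$, so it converges on no annulus. Decay of $|a_i|$ to $0$ is not geometric decay, and nothing in the solvability hypothesis (which via Lemma \ref{q-Young} only yields the Gauss-norm bound $|a-1|_1\leq|q-1|$) upgrades it; indeed the whole point of the paper is that solvable objects over $\mathcal{E}_K$ need not live in $\Ed_K$ or $\mathfrak{R}_K$ (cf.\ Remark \ref{strongly confluence}). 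Without the reduction, your log-affinity step collapses, because at the single radius $\rho=1$ one has $|T^N|_1=1$ and the Gauss norm of $a$ cannot see $N$ at all.

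The repair — and what the paper actually does — is to argue coefficientwise at $\rho=1$. Write $a^-(q,T)a^+(q,T)=\sum_n c_nT^n$; Lemma \ref{c_0=1} gives $|c_0-1|<1$ (so $|c_0|=1$) and $|c_n|<1$ for $n\neq 0$. Then $a=\lambda_q\sum_n c_nT^{n+N}$, and $|a-1|_1\leq|q-1|<1$ says that every coefficient of $a$ has norm $<1$ except the constant one, which is within $<1$ of $1$. If $N\neq 0$, the coefficient of $T^N$ is $\lambda_qc_0$, forcing $|\lambda_q|=|\lambda_qc_0|<1$, while the constant coefficient is $\lambda_qc_{-N}$ with $|c_{-N}|<1$, forcing $|\lambda_qc_{-N}|=1$ and hence $|\lambda_q|>1$ — a contradiction. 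So $N=0$, and then $|\lambda_qc_0-1|<1$ together with $|c_0-1|<1$ yields $|\lambda_q-1|<1$. You should substitute this argument for the $\Ed_K$ reduction; it also handles the Robba case uniformly.
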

\begin{proof} 
The solvability implies $|a(q,T)-1|_1\leq |q-1|<1$ (cf.
Lemma \ref{q-Young}), hence $|a(q,T)|_1=1$. More precisely, with the
notations as in the proof of Lemma \ref{c_0=1}, one has
$|\lambda_q\sum_{n\in\mathbb{Z}}c_nT^{n+N}-1|_1\leq|q-1|<1$. We
know that $\sup_{n\neq 0}|c_n|<1$ and $|c_0-1|<1$ (cf. Lemma
\ref{c_0=1}). If $N\neq 0$, then $|\lambda_q c_0T^N|_1< 1$ and
$|\lambda_q c_{-N}-1|<1$. The first one implies $|\lambda_q|<1$,
which contradicts the second one. Hence $N=0$. 
We deduce that $|\lambda_q
c_0-1|<1$ which implies $|\lambda_q-1|<1$.
\end{proof}

\begin{lemma}\label{q-radius of constant}
Let $|q-1|<1$. There exists $R_0>0$ such that
$Ray(\sigma_q-\lambda_q,\rho)=R_0\cdot\rho$, for all
$\rho\in[0,\infty[$.
\end{lemma}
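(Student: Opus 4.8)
The plan is to make the operator attached to $\sigma_q-\lambda_q$ completely explicit. Since $\sigma_q(Y)=\lambda_q Y$, the associated equation in the variable $d_q$ is $d_q(Y)=g_{[1]}(q,T)Y$ with $g_{[1]}(q,T)=\tfrac{\sigma_q-1}{(q-1)T}$ applied to the constant $\lambda_q$, i.e.
\[
g_{[1]}(q,T)=\frac{\lambda_q-1}{q-1}\cdot T^{-1},
\]
an element of $\a_K(I)$ for any interval $I$ with $0\notin I$ (in particular of $\mathcal{E}_K$, where $T^{-1}$ is a unit). For $\rho=0$ the claimed equality is trivial, both sides vanishing, since $Ray$ is by definition always $\le\rho$; so I may assume $\rho>0$ throughout and compute $|g_{[s]}(q,T)|_\rho$ for all $s$.

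The key step is the observation that each $g_{[s]}(q,T)$ is a scalar multiple of $T^{-s}$. I would prove by induction on $s\ge 1$ that $g_{[s]}(q,T)=c_s\,T^{-s}$ with $c_s\in K$ depending only on $q$ and $\lambda_q$: the base case is $c_1=\tfrac{\lambda_q-1}{q-1}$, and if $g_{[s]}=c_sT^{-s}$ then, using $d_q(T^n)=\tfrac{q^n-1}{q-1}T^{n-1}$ and $\sigma_q(T^n)=q^nT^n$, the recursion $g_{[s+1]}=d_q(g_{[s]})+\sigma_q(g_{[s]})\,g_{[1]}$ gives
\[
g_{[s+1]}(q,T)=c_s\Bigl(\frac{q^{-s}-1}{q-1}+q^{-s}\,\frac{\lambda_q-1}{q-1}\Bigr)T^{-s-1}=c_s\,\frac{q^{-s}\lambda_q-1}{q-1}\,T^{-s-1},
\]
again a monomial; hence $c_s=\prod_{j=0}^{s-1}\tfrac{q^{-j}\lambda_q-1}{q-1}$. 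Since $|q-1|<1$ forces $|q|=1$, one gets $|g_{[s]}(q,T)|_\rho=|c_s|\,\rho^{-s}$ for every $\rho>0$.

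Then I would simply substitute this into the defining formula
\[
Ray(\sigma_q-\lambda_q,\rho)=\min\Bigl(\rho,\ \liminf_s\bigl(|g_{[s]}(q,T)|_\rho\big/|[s]^!_q|\bigr)^{-1/s}\Bigr).
\]
Because $\bigl(|c_s|\rho^{-s}/|[s]^!_q|\bigr)^{-1/s}=\rho\cdot\bigl(|c_s|/|[s]^!_q|\bigr)^{-1/s}$, the positive factor $\rho$ pulls out of the $\liminf$ and of the $\min$, and I obtain $Ray(\sigma_q-\lambda_q,\rho)=R_0\cdot\rho$ with
\[
R_0:=\min\Bigl(1,\ \liminf_s\bigl(|c_s|/|[s]^!_q|\bigr)^{-1/s}\Bigr),
\]
which is manifestly independent of $\rho$, and is $\le 1$.

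The only point requiring a little care — and the one possible obstacle — is the strict positivity $R_0>0$. From the closed form (or directly from the recursion $|c_{s+1}|=|c_s|\,|q^{-s}\lambda_q-1|/|q-1|$ together with $|q^{-s}\lambda_q-1|\le\max(|\lambda_q|,1)$, using $|q|=1$) one has the crude bound $|c_s|\le\bigl(\max(|\lambda_q|,1)/|q-1|\bigr)^s$, hence $(|c_s|/|[s]^!_q|)^{1/s}\le\max(|\lambda_q|,1)\big/\bigl(|q-1|\,|[s]^!_q|^{1/s}\bigr)$. By Lemma~\ref{limit of q-factorial} the sequence $|[s]^!_q|^{1/s}$ converges to $\omega_q>0$, so $\limsup_s(|c_s|/|[s]^!_q|)^{1/s}\le\max(|\lambda_q|,1)/(|q-1|\,\omega_q)<\infty$, and therefore $R_0\ge\min\bigl(1,\ |q-1|\,\omega_q/\max(|\lambda_q|,1)\bigr)>0$. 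Thus the heart of the argument is the monomial identity for the $g_{[s]}$, while the only thing that could go wrong — the denominator $|[s]^!_q|$ decaying faster than geometrically — is precisely excluded by Lemma~\ref{limit of q-factorial}.
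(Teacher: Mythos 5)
Your proof is correct and follows essentially the same route as the paper: the paper simply cites the explicit formula of \cite[1.2.4]{DV-Dwork} for $|g_{[n]}(T)|_\rho$, which is exactly your monomial identity $g_{[s]}=c_sT^{-s}$ (your product $\prod_{j=0}^{s-1}\frac{q^{-j}\lambda_q-1}{q-1}$ being the $q$-binomial expansion appearing there), and concludes from the $\rho$-independence of the constant. Your version is self-contained and, unlike the paper's one-line proof, explicitly verifies $R_0>0$ via Lemma \ref{limit of q-factorial}, which is a worthwhile addition.
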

\begin{proof} 
By \cite[1.2.4]{DV-Dwork}, one has
$$\left|g_{[n]}(T)\right|_\rho^{\frac{1}{n}}=
\frac{|\sum_{j=0}^{n}(-1)^j\binom{n}{j}_{q^{-1}}q^{\frac{-j(j-1)}{2}}
\lambda_q^j|^{1/n}}{|q-1|\cdot\rho}\;.$$ Since the numerator does
not depend on $\rho$, the lemma is proved. 
\end{proof}

\subsection{The settings}
\label{the settings}

As for differential equations, we shall find a description of the
formal solution of a given solvable $q-$difference equation
\begin{equation}\label{q-diff equation}
\sigma_q(y_q) = a(q,T)\cdot y_q\;,
\end{equation}
with $a(q,T)\in\mathcal{E}_K$. We will show that solutions of
$q-$difference equations are actually solutions of 
differential equation of the form \eqref{formal solution}. By Lemma \ref{lambda=1 and N=0},
we know that
\begin{equation}
a(q,T)=\lambda_q\cdot a^-(q,T)\cdot a^+(q,T)\;,
\end{equation}
with $a^-(q,T):=1+\sum_{i\leq -1} \alpha_iT^i$, and
$a^+(q,T):=1+\sum_{i\geq 1} \alpha_iT^i$. Now write formally
\begin{equation}
a^-(q,T):=\exp(\sum_{i\leq -1}a_iT^i)\;,\qquad
a^+(q,T):=\exp(\sum_{i\geq 1}a_iT^i)\;.
\end{equation}
Then the formal solution of \eqref{q-diff equation} is
\begin{equation}\label{formal solution explicit}
y_q(T):=\exp\bigl(\sum_{i\leq -1}\frac{a_i}{q^i-1}T^i\bigr)\cdot
q^{a_0}\cdot\exp\bigl(\sum_{i\geq 1}\frac{a_i}{q^i-1}T^i\bigr)\;.
\end{equation}
We are interested to study this exponential in the case in which
the equation \eqref{q-diff equation} is solvable. The 
main result will be the Criterion of solvability 
\ref{criterion of solvability for q-difference}.

\subsection{Technical results} %
In this section $q\in\mathrm{D}^-(1,1)$ is fixed. We will 
omit the index $q$ in the series. The following 
proposition is the 
q-analogue of Proposition 
\ref{division of the problem over Amice} for the Robba 
ring.

\begin{proposition}
\label{q-division of the problem over Robba} Let
$|q-1|<1$. Let $\sigma_q-a(T)$, $a(T)=\lambda a^-(T)
a^+(T) \in \mathfrak{R}_K$ be a solvable equation. Then $\sigma_q-a^-(T)$,
$\sigma_q-\lambda$, $\sigma_q-a^+(T)$ are all solvable.
\end{proposition}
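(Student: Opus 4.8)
The strategy is to mimic exactly the structure of the proof of Proposition \ref{division of the problem over Amice}, replacing the differential quantities by their $q$-analogues, which are available thanks to the preliminary lemmas of this section. Write $R^{-}:=Ray(\sigma_q-a^-(T),1)$, $R^{+}:=Ray(\sigma_q-a^+(T),1)$ and $R^{0}:=Ray(\sigma_q-\lambda,1)$. As in Step~1, since $a^-(T)$ (resp.\ $a^+(T)$) converges and is invertible near $\infty$ (resp.\ near $0$), the $q$-difference module $\sigma_q-a^-(T)$ is solvable at all large $\rho$ and $\sigma_q-a^+(T)$ is solvable at all small $\rho$, while by Lemma \ref{q-radius of constant} one has $Ray(\sigma_q-\lambda,\rho)=R^0\cdot\rho$ for all $\rho$. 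The tensor product $\sigma_q-a(T)$ of the three (note the $q$-difference tensor product adds the $a$'s in the multiplicative sense, i.e. multiplies the matrices, which on the level of $d_q$-operators still behaves multiplicatively in $\lambda,a^-,a^+$) is solvable by hypothesis, and the log-affinity/concavity properties of $\rho\mapsto Ray$ together with the tensor-product inequality of Remark \ref{remark : properties}(iii) — which hold for $q$-difference modules as well — allow one to run verbatim the Step~2 argument: if two of $R^-,R^+,R^0$ equal $1$ so does the third, and by contrapositive, assuming at least two are $<1$, concavity forces $R^+=R^-=:R$ and $R^0\geq R$.

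Next, Step~3: using the $q$-small radius lemma \ref{q-Young} in place of Lemma \ref{small radius2}, from $R<\omega_q$ one would get $|a^-(T)-1|_1>|q-1|$ or $|a^+(T)-1|_1>|q-1|$, hence $|a(T)-1|_1>|q-1|$ (since $a=\lambda a^-a^+$ with $|\lambda-1|<1$ by Lemma \ref{lambda=1 and N=0}, and the cross terms have norm $<1$, so the norm of $a-1$ is governed by the $a^\pm-1$ parts), contradicting solvability of $\sigma_q-a(T)$ via \ref{q-Young}. Thus $R\geq\omega_q$. Then Step~4–6: one needs a $q$-analogue of the Katz/Christol criterion (Lemma \ref{Katz}) and of Lemma \ref{|a_i|<1} to upgrade $R\geq\omega_q$ to $R>\omega_q$, and then the existence of a $q$-antecedent by Frobenius (available since $|q-1|<1$ over $\mathfrak{R}_K$, cf.\ the introductory discussion of the Second Part and \cite{Pu-q-Diff}, \cite{DV-Dwork}) to pass to an antecedent whose radius is $R^{1/p}$; iterating gives $R>\omega_q^{1/p^h}$ for all $h$, hence $R=1$, and finally $R^0=1$ follows from the tensor-product relation.

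\textbf{Main obstacle.} The delicate point is Step~4, i.e.\ passing from $R\geq\omega_q$ to the strict inequality $R>\omega_q$: over the Amice ring in Proposition \ref{division of the problem over Amice} this used the explicit Katz criterion \ref{Katz} together with Lemma \ref{|a_i|<1} (reduction mod $\mathfrak{p}_K$ and a $T$-adic valuation argument), and here one must either establish the $q$-analogues of these statements (the reference points to a forthcoming Lemma \ref{q-Katz}) or argue directly on $\mathfrak{R}_K$ that the radius cannot be exactly $\omega_q$ — using that $a^-(T)$ and $a^+(T)$ each converge on a genuine annulus, so that a transfer-type estimate applies. The secondary subtlety is bookkeeping the multiplicative decomposition: one must check that $|\lambda-1|<1$ and the cross-terms $\alpha_i\alpha_{-j}$ being of norm $<1$ really let the norm $|a(T)-1|_1$ decouple into the contributions of $a^-$ and $a^+$, exactly as in Lemma \ref{c_0=1}; this is routine but must be stated. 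Everything else is a transcription of the differential-equation argument with $\omega\rightsquigarrow\omega_q$, $|k!|\rightsquigarrow|[k]^!_q|$, $\partial_T\rightsquigarrow d_q$, and the transfer/antecedent inputs cited above.
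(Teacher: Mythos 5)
Your plan transplants the six-step Amice-ring argument of Proposition \ref{division of the problem over Amice} to the Robba ring, but that route cannot deliver the statement in the generality claimed. The decisive input of your Steps 4--6 is the $q$-analogue of Katz's criterion together with the antecedent by Frobenius of $\sigma_q-a^+$ over a disk and of $\sigma_q-a^-$ over the complement of a disk; as the paper stresses in the introduction to this Part, this ``weak Frobenius structure'' is only known from \cite{DV-Dwork} for $|q-1|<\omega$, which is precisely why the hypothesis $|q-1|<\omega$ is imposed from Proposition \ref{q-division of the problem over Amice} onwards. The present proposition is asserted for all $|q-1|<1$, and the existence of an antecedent of the full module over $\mathfrak{R}_K$ (known only via confluence) does not give you antecedents of the two Motzkin factors separately, which is what the relation $R(1)=R^{1/p}$ for each factor requires. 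A secondary problem is that over $\mathfrak{R}_K$ solvability is by definition the limit $\lim_{\rho\to 1^-}Ray(\sigma_q-a,\rho)=1$, so the quantities $Ray(\cdot,1)$ you introduce in Step 1 are not the relevant ones: $a^+$ lives on $[0,1[$ and $a$ itself need not be defined at $|T|=1$.

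What you are missing is that the Robba-ring case is much easier than the Amice case, precisely because all three factors $a^-$, $\lambda$, $a^+$ converge on a common open annulus $]1-\varepsilon,1[$. There the three radius functions are continuous and piecewise $\log$-affine; the normalized radius of $\sigma_q-a^-$ (resp.\ $\sigma_q-a^+$) has non-negative (resp.\ non-positive) slope and that of $\sigma_q-\lambda$ is constant, so the three graphs cross in at most finitely many points. By point (iii) of Remark \ref{remark : properties} and continuity one therefore gets $Ray(\sigma_q-a,\rho)=\min\bigl(Ray(\sigma_q-a^-,\rho),\,Ray(\sigma_q-a^+,\rho),\,Ray(\sigma_q-\lambda,\rho)\bigr)$ for \emph{every} $\rho$ in the annulus. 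Letting $\rho\to 1^-$ and using the solvability of $\sigma_q-a$ forces each of the three radii to tend to $\rho$, which is the assertion. No small-radius estimate, no $q$-analogue of Katz's lemma and no Frobenius antecedent are needed, and the argument works for every $|q-1|<1$.
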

\begin{proof} 
With analogous notations of Proposition \ref{division of the problem over
Amice}, we find the following picture:
\begin{center}
\begin{picture}(300,80)
\put(150,0){\vector(0,1){80}} \put(0,60){\vector(1,0){300}}
\put(260,65){$r=\log(\rho)$} \put(155,75){$R(r)$}
\put(0,62){\begin{tiny}$0\leftarrow\rho$\end{tiny}}
\put(50,60){\line(6,-1){60}} 
\put(110,50){\line(2,-1){30}} 
\put(140,35){\line(2,-5){10}} 
\put(170,55){\line(6,1){30}} 
\put(170,55){\line(-1,-1){15}} 
\put(155,40){\line(-2,-5){12}} 
\put(0,23){\line(1,0){300}} 
\put(147.5,57.5){$\bullet$} 
\put(83,75){\begin{tiny}$R(\sigma_q-a(T),0)$\end{tiny}}
 \put(135,75){\vector(1,-1){12}}

\put(140.5,57.5){$\bullet$} 
\put(103,67){\begin{tiny}$\log(1\!\!-\!\!\varepsilon)$\end{tiny}}
\put(130,67){\vector(2,-1){10}}
\qbezier[30](143,10)(143,35)(143,60)
 \put(80,55){\circle{10}}     
 \put(60,45){\line(2,1){15.5}}
 \put(0,40){\begin{tiny}$R(\sigma_q-a^+(T),r)$\end{tiny}}
 \put(180,55){\circle{10}}     
 \put(200,45){\line(-2,1){15.5}}
 \put(200,40){\begin{tiny}$R(\sigma_q-a^-(T),r)$\end{tiny}}
 \put(100,23){\circle{10}}     
 \put(80,13){\line(2,1){15.5}}

 \put(0,10){\begin{tiny}$R(\sigma_q-\lambda,r)=\log(R_0)$\end{tiny}}
\put(147.5,0){$\bullet$} 
\put(152.5,0){\begin{tiny}$\log(\omega_q)$\end{tiny}}
\qbezier[100](0,2.5)(150,2.5)(300,2.5)
\put(50,-2){\begin{tiny}$\downarrow$small
radius$\downarrow$\end{tiny}}
\end{picture}
\end{center}
Since there exists a common interval 
$I:=]1-\varepsilon,1[$ in
which all operators exist, and since the slope of
$Ray(\sigma_q-a^-,\rho)$ 
(resp. $Ray(\sigma_q-a^+,\rho)$) is
strictly positive (resp. negative) in $I$, hence there are at 
most $3$ points in which these graphics cross. 
Hence, by continuity, for all $\rho\in I$ one has 
\begin{equation}\label{ray=min}
Ray(\sigma_q-a,\rho)=\min(\;Ray(\sigma_q-a^-,\rho)\;,\;Ray(\sigma_q-a^+,\rho)\;,\;Ray(\sigma_q-\lambda,\rho)\;)\;.
\end{equation}
By assumption $\lim_{\rho\to
1^-}Ray(\sigma_q-a,\rho)=1$, hence the claim follows. \end{proof}
We now give the 
q-analogue of Proposition 
\ref{division of the problem over Amice} for the ring $\mathcal{E}_K$:
\begin{proposition}
\label{q-division of the problem over Amice}
Let $|q-1|<\omega$. Let $\sigma_q-a(T)$, $a(T)=\lambda a^-(T)
a^+(T) \in \mathcal{E}_K$, be a solvable equation. Then
$\sigma_q-a^-(T)$, $\sigma_q-\lambda$, $\sigma_q-a^+(T)$ are all
solvable.
\end{proposition}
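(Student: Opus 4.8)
The proof should follow the same six-step strategy as Proposition \ref{division of the problem over Amice}, but replacing differential tools by their $q$-analogues (Lemmas \ref{q-Young}, \ref{q-radius of constant}, and the $q$-Katz lemma). The starting point is that over $\mathcal{E}_K$ there is no annulus on which all series converge, so the $\log$-affinity argument of Proposition \ref{q-division of the problem over Robba} is not directly available; instead one works with the individual slopes at $\rho$ near $1$ as in the Amice case. First I would set
\[
R^-:=Ray(\sigma_q-a^-(T),1),\quad R^+:=Ray(\sigma_q-a^+(T),1),\quad R^0:=Ray(\sigma_q-\lambda,1),
\]
and observe, using \eqref{s_x(T)}-type Taylor solutions at $\infty$ resp. $0$, that $\sigma_q-a^-(T)$ is solvable at all $\rho$ large and $\sigma_q-a^+(T)$ is solvable at all $\rho$ close to $0$, while by Lemma \ref{q-radius of constant} the function $\rho\mapsto Ray(\sigma_q-\lambda,\rho)$ is exactly $R^0\cdot\rho$. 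One also has the analogue of \eqref{eq : tensor product radius} for $q$-difference equations, together with the fact that $\sigma_q-a$ is the "tensor product'' of $\sigma_q-a^-$, $\sigma_q-\lambda$, $\sigma_q-a^+$ (their $g_{[1]}$'s add up to first order); this is what lets one reproduce Steps 1 and 2 and conclude $R^+=R^-$ and $R^0\geq R^-=R^+$ by exactly the concavity-plus-continuity argument of the original proof.

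The next block (Steps 3-4) is where the numerical lemmas of Section \ref{numerical lemmas} enter. Step 3, $R\geq\omega_q$, follows from the $q$-small-radius Lemma \ref{q-Young}: if $R^-<\omega_q$ or $R^+<\omega_q$ then $|a^-(T)-1|_1>|q-1|$ or $|a^+(T)-1|_1>|q-1|$, hence $|a(T)-1|_1>|q-1|$, contradicting solvability of $\sigma_q-a(T)$. For Step 4, $R>\omega_q$, I would prove and use the $q$-analogue of Lemma \ref{Katz} (stated in the paper as Lemma \ref{q-Katz}) saying that for $|a(T)-1|_1\leq |q-1|$ one has $Ray(\sigma_q-a(T),1)>\omega_q$ iff $|g_{[s]}(q,T)|_1<1/|q-1|$ — or equivalently $|[s]_q^!g_{[s]}|_1<$ (the appropriate bound) — for some $s$, together with the $q$-analogue of Lemma \ref{|a_i|<1}: if $Ray(\sigma_q-a(T),1)>\omega_q$ then the coefficients $\alpha_i$ with $i\leq -1$ all have $|\alpha_i|<1$, proved by the same reduction-in-$k((t))$ induction on the recursion $g_{[s+1]}=d_q(g_{[s]})+\sigma_q(g_{[s]})g_{[1]}$, noting that $\sigma_q$ acts trivially on residues. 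Since $\lim_{i\to-\infty}|\alpha_i|=0$ this gives $|a^-(T)-1|_1<1$, whence $R^->\omega_q$ by the $q$-Katz lemma, and $R=R^+=R^->\omega_q$.

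Finally Steps 5-6 bootstrap via Frobenius antecedents. Because $|q-1|<\omega$ (this is precisely where that hypothesis is used, per the discussion after Theorem \ref{motzkin}), the condition $R>\omega_q$ guarantees by \cite{DV-Dwork} the existence of antecedents by Frobenius of $\sigma_q-a^-(T)$, $\sigma_q-\lambda$, $\sigma_q-a^+(T)$; since $a^-,a^+$ are invertible in $\mathcal{E}_K$, the base-change functions $f^\pm$ lie in $\mathcal{E}_K^\times$, so the antecedent of $\sigma_q-a(T)$ is again of the factored form $\sigma_q-\lambda_{(1)}a_{(1)}^-(T)a_{(1)}^+(T)$ with $a_{(1)}^\pm$ still supported away from the constant term, hence solvable. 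Steps 1-4 apply to the antecedent, giving $R(1)=R^{1/p}>\omega_q$, whence $R>\omega_q^{1/p}$; iterating indefinitely yields $R>\omega_q^{1/p^h}$ for all $h$, i.e. $R=1$, so $\sigma_q-a^-(T)$, $\sigma_q-a^+(T)$ are solvable, and then $\sigma_q-\lambda$ is solvable because $\sigma_q-a$ is the tensor product of the three. \textbf{The main obstacle} I anticipate is not the bootstrap but Step 4: establishing the $q$-analogues of Lemmas \ref{Katz} and \ref{|a_i|<1} with the correct powers of $|q-1|$ in the inequalities — the $q$-factorial $[s]_q^!$ does not have the clean valuation of $s!$, so one must invoke Lemma \ref{limit of q-factorial} and Lemma \ref{valuation of q^d-1} carefully to see that the reduction mod $\mathfrak{p}_K$ of $g_{[1]}(q,T)$ propagates through the recursion exactly as in the differential case.
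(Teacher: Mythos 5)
Your plan reproduces the paper's own six-step argument almost verbatim (reduction to $R^\pm,R^0$; concavity plus continuity for Step 2; $q$-small radius for Step 3; a $q$-Katz criterion for Step 4; Frobenius antecedents and iteration for Steps 5--6), so the architecture is right. But the quantitative statements you propose for Step 4 are not the correct ones and, as written, the step fails. The $q$-Katz criterion (Lemma \ref{q-Katz}) holds under the hypothesis $|g_{[1]}|_1\le 1$, i.e.\ $|a(T)-1|_1\le|q-1|$, and reads: $Ray>\omega_q$ if and only if $|g_{[s]}|_1<1$ for some $s$ --- not $|g_{[s]}|_1<1/|q-1|$. Your version with threshold $1/|q-1|$ is false: an equation with $|a(T)-1|_1=|q-1|$ for which the residue computation gives $|g_{[s]}|_1=1$ for all $s$ has radius exactly $\omega_q$, not $>\omega_q$. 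Correspondingly, the $q$-analogue of Lemma \ref{|a_i|<1} must conclude $|\alpha_i|<|q-1|$ for all $i\le-1$, i.e.\ $|a^-(T)-1|_1<|q-1|$ (this is Lemma \ref{q-|a_i|<1} of the paper); your weaker conclusion $|\alpha_i|<1$ only yields $|g^-_{[1]}|_1<1/|q-1|$, which does not even meet the hypothesis of the $q$-Katz lemma, let alone its conclusion. The stronger bound does come out of the same reduction-in-$k(\!(t)\!)$ induction, because the object being reduced is $g_{[1]}=(a(T)-1)/((q-1)T)$, so the dichotomy sits at $|\alpha_{-d}|=|q-1|$ rather than at $|\alpha_{-d}|=1$; one also has to first run Steps 1--3 under the weaker hypothesis $Ray(\sigma_q-a,1)>\omega_q$ to know $|a^-(T)-1|_1\le|q-1|$ before starting the induction. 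You flagged ``the correct powers of $|q-1|$'' as the danger point, and that is exactly where your statements need repair.

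A second, smaller omission is in Step 3: you pass from ($|a^-(T)-1|_1>|q-1|$ or $|a^+(T)-1|_1>|q-1|$) to $|a(T)-1|_1>|q-1|$ with a bare ``hence''. Unlike the differential case, where $g=g^-+a_0+g^+$ is additive, here $a=\lambda a^-a^+$ is a product, so $a-1=h^-+h^++h^-h^+$ (with $h^+$ absorbing $\lambda-1$), and one needs $|h^-+h^++h^-h^+|_1=\sup(|h^-|_1,|h^+|_1)$; the paper isolates this as Lemma \ref{|x+y+xy|=sup(|x|,|y|)}, whose hypothesis $|h^-|_1<1$ is supplied by Lemma \ref{|a_-i|<1}. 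It is easy, but it is a genuine extra step in the $q$-setting that your plan skips over.
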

\begin{proof} \label{proof of first reduction} Steps $1$ and $2$ of this
proof coincide with the same steps of the proof of Proposition \ref{division
of the problem over Amice}. We will expose it without
proofs for fixing notation. The first part of this proposition
does not use the hypothesis $|q-1|<\omega$, so we will assume this
hypothesis starting from Hypothesis 
\ref{hypothesis q-1<omega}.

--- \emph{Step 1 : }By \cite[3.6]{DV-Dwork}, the equation
$\sigma_q-a^{-}(T)$ (resp. $\sigma_q-a^{+}(T)$) has a convergent
solution at $\infty$ (resp. at $0$), hence
$Ray(\sigma_q-a^{-}(T),\rho)=\rho$, for large values of $\rho$
(resp. $Ray(\sigma_q-a^{-}(T),\rho)=\rho$ for $\rho$ close to
$0$). Let $R^0$ be as in Lemma \ref{q-radius of constant},
\begin{eqnarray}
R^-&:=&Ray(\sigma_q-a^{-}(T),1)\;,\\
R^+&:=&Ray(\sigma_q-a^{+}(T),1)\;.
\end{eqnarray}

--- \emph{Step 2 : We have $R^+ = R^-$ and $R^0 \geq R^- = R^+$ (as in
the following picture in which $R := R^- = R^+$)}.\\

We set $r:=\log(\rho)$, and
$R(r):=\log(Ray(\sigma_q-a(T),\rho)/\rho)$.
\begin{center}
\begin{picture}(300,80)
\put(150,0){\vector(0,1){80}} \put(0,60){\vector(1,0){300}}
\put(260,65){$r=\log(\rho)$} \put(155,75){$R(r)$}
\put(0,62){\begin{tiny}$0\leftarrow\rho$\end{tiny}}
\put(50,60){\line(6,-1){60}} 
\put(110,50){\line(2,-1){30}} 
\put(140,35){\line(2,-5){10}} 
\put(147.5,7.5){$\bullet$}\put(152,7.5){\tiny{$\log(R)$}}
\put(170,55){\line(6,1){30}} 
\put(170,55){\line(-1,-1){15}} 
\put(155,40){\line(-1,-6){5}} 
\put(0,23){\line(1,0){300}} 
\put(147.5,57.5){$\bullet$} 
\put(77,75){\begin{tiny}$R(\sigma_q-a(T),0)$\end{tiny}}
 \put(135,75){\vector(1,-1){12}}

 \put(80,55){\circle{10}}     
 \put(60,45){\line(2,1){15.5}}
 \put(0,40){\begin{tiny}$R(\sigma_q-a^+(T),r)$\end{tiny}}
 \put(180,55){\circle{10}}     
 \put(200,45){\line(-2,1){15.5}}
 \put(200,40){\begin{tiny}$R(\sigma_q-a^-(T),r)$\end{tiny}}
 \put(100,23){\circle{10}}     
 \put(80,13){\line(2,1){15.5}}

 \put(0,10){\begin{tiny}$R(\sigma_q-\lambda_q,r)=\log(R_0)$\end{tiny}}
\put(147.5,0){$\bullet$} 
\put(152.5,0){\begin{tiny}$\log(\omega_q)$\end{tiny}}
\qbezier[100](0,2.5)(150,2.5)(300,2.5)
\put(50,-2){\begin{tiny}$\downarrow$small
radius$\downarrow$\end{tiny}}
\end{picture}\\\emph{ }\\
\end{center}

--- \emph{Step 3 : We have $R\geq \omega_q$.}\\

Indeed, if $R^-=R^+<\omega_q$, then, by the small radius Lemma \ref{q-Young},
$|a^-(T)-1|_1>|q-1|$ and $|a^+(T)-1|_1>|q-1|$. 
We shall now show
that this implies that $|a(T)-1|_1>|q-1|$, which is in
contradiction with the small radius lemma, since the 
equation $\sigma_q-a(T)$ is solvable. 
\begin{lemma}\label{|x+y+xy|=sup(|x|,|y|)}
Let $(R,|.|)$ be an ultrametric valued ring. Let $h^-,h^+\in R$ be
two elements satisfying $|h^-|<1$, and
$|h^-+h^+|=\sup(|h^-|,|h^+|)$. Then
\begin{equation}
|h^-+h^++h^-h^+|\;=\;\sup(|h^-|,|h^+|)\;.
\end{equation}
\end{lemma}
\begin{proof} If $|h^+|>|h^-|$, then 
$|h^-+h^++h^-h^+|=|h^+|$.
If $|h^+|\leq|h^-|<1$, then 
$|h^-h^+|<|h^-|=\max(|h^-|,|h^+|)=|h^-+h^+|$. 
\end{proof}

\emph{Proof of Step $3$: } Write $a^-(T)=1+h_q^-(T)$ and
$\lambda_q\cdot a^+(T)=1+(\lambda_q-1)+\lambda_q\cdot h_q^+(T)$.
Namely, in the notations of Theorem 
\ref{Thm : Motzkin annulus}, we have
$h_q^-(T)=\sum_{i\leq -1}\alpha_iT^i$ and $h^+_q(T)=\sum_{i\geq
1}\alpha_iT^i$. We apply Lemma \ref{|x+y+xy|=sup(|x|,|y|)} to the
field $R:=\mathcal{E}_K$, $h^-:=h^-_q(T)$ and
$h^+:=(\lambda_q-1)+\lambda_qh^+_q(T)$. Indeed
$|h^-+h^+|_1=\sup(|h^-|_1,|h^+|_1)$, and $|h^-|_1<1$ by Lemma \ref{|a_-i|<1}. Lemma
\ref{|x+y+xy|=sup(|x|,|y|)} then implies
\begin{equation}
|a(T)-1|_1\;=\; |(1+h^-)(1+h^+)-1|_1\;=\; 
|h^-+h^++h^-h^+|_1
\;=\;
\sup(|h^-|_1,|h^+|_1).
\end{equation}
Now, if $R^-< \omega_q$, then $|a^-(T)-1| > |q-1|$, that is
$|h^-(T)|>|q-1|$. Hence $|a(T)-1|_1>|q-1|$, which implies that the
radius of $\sigma_q-a(T)$ is small (cf. Lemma \ref{q-Young}).
Since, by assumption, $Ray(\sigma_q-a(T),1)=1$, this is absurd
and then $R\geq \omega_q$.\\

--- \emph{Step 4 : We have $R>\omega_q$.}

Since $R=R^-$ 
it is enough to show that $R^->\omega_q$.
By Lemma \ref{q-|a_i|<1} below we have 
$|a^--1|<|q-1|$. On the other hand Lemma 
\ref{q-Katz} proves that this implies
$R^->\omega_q$. 

\begin{lemma}[\protect{$q$-analogue of Lemma 
\ref{|a_i|<1}}]\label{q-|a_i|<1}
Assume that the Motzkin decomposition of 
$a(T)\in\mathcal{E}_K$ is 
$a(T):=\lambda_q a^-(T)a^+(T)$, with 
$|\lambda_q-1|<1$. If
$Ray(\sigma_q-a(T),1)>\omega_q$, then we have $a^-(T)=1+h^-_q(T)$, where $h^-_q(T)=\sum_{i\leq -1}\alpha_iT^i$ satisfies $|h_q^-|_1<|q-1|$.
\end{lemma}
\begin{proof} Consider the operator $d_q-g_{[1]}(T)$, with
$g_{[1]}(T):=\frac{a(T)-1}{(q-1)T}$, and write
\begin{equation}\label{g_1 of tensor product}
g_{[1]}(T)=\frac{a^-(T)-1}{(q-1)T}+a^{-}(T)\frac{\lambda_qa^+(T)-1}{(q-1)T}
=g_{[1]}^-(T)+a^{-}(T)\frac{\lambda_qa^+(T)-1}{(q-1)T}\;,
\end{equation}
with $g_{[1]}^-(T):=\frac{a^-(T)-1}{(q-1)T}$. Since
$Ray(d_q-g_{[1]}(T),1)>\omega_q$, hence, by 
\eqref{q-radius explicit formula} 
and Lemma \ref{limit of q-factorial}, one has
$\lim_{s\to\infty}|g_{[s]}(T)|_1=0$. In particular
$|g_{[s]}(T)|_1<1$, for some $s\geq 1$. Moreover, by the Small
Radius Lemma \ref{q-Young}, we have $|g_{[1]}(T)|_1\leq 1$. These
facts imply our claim in the following way. 

By contrapositive, suppose that
$|a^-(T)-1|_1\geq |q-1|$. Our assumption 
$Ray(\sigma_q-a(T),1)>\omega_q$ is enough to obtain 
Steps 1,2,3. In particular Step 3 says $Ray(\sigma_q-a^-(T),1)\geq \omega_q$. Then, by
Lemma \ref{q-Young}, $|g^-_{[1]}(T)|_1\leq 1$, and hence
$|a^-(T)-1|_1=|q-1|$. This means $|g^-_{[1]}(T)|_1 =1$. 

We now look to $g_{[1]}$ and get a contradiction
exploiting 
\eqref{g_1 of tensor product} and the fact that 
$|g_{[1]}^-(T)|_1= 1$. Namely, write as usual 
$a^-(T)=1+\sum_{i\geq 1}\alpha_{-i}T^{-i}$.
Let $-d\leq -1$ be the smallest index such that
$|\alpha_{-d}|=|q-1|$. Observing equation 
\eqref{g_1 of tensor product} we see that by
Lemma \ref{|a_-i|<1} the reduction $a^-(T)$ is $1$, 
and the reduction of $\frac{\lambda_qa^+(T)-1}{(q-1)T}$ lies in $t^{-1}k[[t]]$, so the 
reduction of $g_{[1]}(T)$ in $k(\!(t)\!)$ 
is of the form 
$\overline{g_{[1]}(T)}= 
\overline{\alpha}t^{-d-1}+(\textrm{terms of higher degree})$, 
where $\overline{\alpha}$ is the reduction of 
$\alpha_{-d}/(q-1)$. 

A simple 
induction on the equation
$g_{[s+1]}=d_q(g_{[s]})+\sigma_q(g_{[s]})g_{[1]}$ shows that
$\overline{g_{[s]}(T)} = \overline{\alpha}^st^{(-d-1)s}+(\textrm{terms of higher degree})$,
this is in contradiction with the fact that $|g_{[s]}(T)|_1<1$,
for some $s\geq 1$.\end{proof}

\begin{lemma}[\protect{$q$-analogue of Lemma 
\ref{Katz}}]\label{q-Katz}
Let $q\in\mathrm{D}^-(1,1)$. Let $\dq-g(T)$,
$g(T)\in\mathcal{E}_K$, be some equation. Suppose that
$|g(T)|_1\leq 1$. Then $Ray(\dq-g(T),1)>\omega_q$ if and only if
$|g_{[s]}(T)|<1$, for some $s\geq 1$.
\end{lemma}
\begin{proof} 
Condition $|g(T)|_1\leq 1$ guarantee 
that 
$n\mapsto|g_{[n]}|_1$ is decreasing. Indeed,
$|g_{[1]}|_1=|T^{-1}g(T)|_1\leq 1$ and inductively
$|g_{[n+1]}|_1=
|d_q(g_{[n]})+\sigma_q(g_{[n]})g_{[1]}|_1\leq
\sup(|g_{[n]}|_1,
|g_{[n]}g_{[1]}|_1)=|g_{[n]}|_1\sup(1,|g_{[1]}|_1)=
|g_{[n]}|_1$. So if $Ray(d_q-g(T),1)>\omega_q$, it 
follows from \eqref{q-radius explicit formula} that 
$\lim_n|g_{[n]}(T)|_1=0$.

Assume now that $|g_{[n]}|_1<1$, for some $n\geq 1$. 
Since the sequence $n\mapsto|g_{[n]}|_1$ 
is decreasing, there exists $h>0$ 
such that $|g_{[p^h]}|_1<1$. We now fix such an $h$, 
and we obtain an estimation of $Ray(d_q-g(T),1)$.
By \cite[1.2.2]{DV-Dwork}, one has
$$d_q^{(m+1)p^h}(y)=d_q^{p^h}(d_q^{mp^h}(y))=
\sum_{r=0}^{p^h}\binom{p^h}{r}_q d_q^{r}(g_{[mp^h]})\cdot
\sigma_q^r(g_{[p^h-r]})\sigma_q^r(y)\;.$$ Then
$g_{[(m+1)p^h]}=\sum_{r=0}^{p^h}\binom{p^h}{r}_q
d_q^{r}(g_{[mp^h]})\cdot \sigma_q^r(g_{[p^h-r]})a(T)a(qT)\cdots
a(q^{r-1}T)$. Now for all $j\geq 0$ one has 
$|a(q^jT)|_1=|a(T)|_1=1$, and on the other hand 
$|d_q^k(f)|_1\leq |[k]^!_q||f|_1$ 
(cf. Lemma \ref{d_q(f) leq k! f}).
Moreover $|\tbinom{p^h}{r}_q|=|[p^h]_q[p^h-1]_q\cdots
[p^h-r+1]_q|/|[r]^!_q|$, where $[n]_q:=\frac{q^n-1}{q-1}$. Since
$|[p^h]_q|<|[p]_q|$, we obtain
\begin{equation}\label{..E..}
|g_{[(m+1)p^h]}|_1\;\leq\; 
\sup(|[p]_q|,|g_{[p^h]}|_1)\cdot
|g_{[mp^h]}|_1\;.
\end{equation}
We deduce that for all $m\geq 1$ one has 
$|g_{[mp^h]}|_1\leq s^m$, where 
$s:=\sup(|[p]_q|,|g_{[p^h]}|_1)<1$.  

Now we obtain a similar estimation for all $n\geq p^h$. 
We let $m(n):=[n/p^h]\geq 1$, where $[a]$ is the
greatest integer smaller than or equal to $a$. Then 
$m(n)p^h\leq n$ and $|g_{[n]}|_1\leq |g_{[m(n)p^h]}|_1\leq s^{m(n)}$. 

Finally we now obtain the required estimation. We have
\begin{equation}\label{yyyyo}
\left|\frac{g_{[n]}}{[n]_q^!}\right|_1^{\frac{1}{n}} 
\;\leq\;
\frac{s^{m(n)/n}}{|[n]_q^!|^{1/n}}
\;\leq\;
\frac{s^{1/p^h}}{
|[n]_q^!|^{1/n}}
\;\xrightarrow[]{\;\;n\;\mapsto\infty\;\;}\;
\frac{s^{1/p^h}}{\omega_q}\;.
\end{equation}
%
By \eqref{q-radius explicit formula}, this gives
$Ray(\dq-g_{[1]},1)\geq \omega_q/s^{1/p^h}>\omega_q$. \end{proof}

\begin{hypothesis}\label{hypothesis q-1<omega}
From now on we will suppose that
$|q-1|<\omega$. This implies $\omega_q=\omega$.
\end{hypothesis}

Hypothesis \ref{hypothesis q-1<omega} 
is necessary to have Theorem 
\cite{DV-Dwork}: the antecedent by Frobenius.\\

--- \emph{Step 5: Since $|q-1|<1$, and since $R>\omega$, then,
by \cite{DV-Dwork}, we can take the antecedent by Frobenius of
$\sigma_q-a^-(T)$, $\sigma_q-a^+(T)$ and
$\sigma_q-\lambda_q$.}\\

More precisely, there exist a finite extension $K^{(1)}/K$, an
$f^+(T)=\sum_{i\geq 0}b^+_iT^i\in\a_{K^{(1)}}([0,1[)^\times$,
$f^-(T)=\sum_{i\leq 0}b^-_iT^i\in\a_{K^{(1)}}([1,\infty])^\times$,
and there are functions $a^{(1),-}(T)=\sum_{i\leq
0}\alpha_i^{(1),-}T^i\in\mathcal{E}_{K^{(1)}}$,
$a^{(1),+}(T)=\sum_{i\geq
0}\alpha_i^{(1),+}T^i\in\mathcal{E}_{K^{(1)}}$, and
$\lambda_q^{(1)}\in K^{(1)}$ such that
\begin{eqnarray*}
(\lambda_q^{(1)})^p & = & \lambda_q\;;\\
a^{(1),-}(T^p)^\sigma \cdot a^{(1),-}(q
\cdot T^{p})^\sigma\cdots
a^{(1),-}(q^{p-1}T^{p})^\sigma&=&
a^-(T)\cdot\frac{f^-(q\cdot T)}{f^-(T)}\;;\\
a^{(1),+}(T^p)^\sigma 
\cdot a^{(1),+}(q\cdot T^{p})^\sigma\cdots
a^{(1),+}(q^{p-1}T^{p})^\sigma&=& 
a^+(T)\cdot
\frac{f^+(q\cdot T)}{f^+(T)}\;,
\end{eqnarray*}
where, for all functions 
$a(T):=\sum \alpha_iT^i\in\mathcal{E}_K$,
we let $a(T)^\sigma:=\sum\sigma(\alpha_i)T^{i}$. 

These conditions imply 
immediately that $b_0^+\neq 0$, $b_0^-\neq 0$, and that
$f^+(qT)/f^+(T)) = 1+u_1T+u_2T^2+\cdots$, and $f^-(qT)/f^-(T) =
1+u_{-1}T^{-1}+u_{-2}T^{-2}+\cdots$. Since
$a^-(T)=1+\alpha_{-1}T^{-1}+\cdots$, and
$a^+(T)=1+\alpha_1T+\cdots$, this implies that
$\alpha_0^{(1),+}=1$ and $\alpha_0^{(1),-}=1$. Hence the function
\begin{equation}
a^{(1)}(T)\;:=\;
\lambda_q^{(1)}\cdot a^{(1),-}(T)\cdot a^{(1),+}(T)
\end{equation}
lies in $\mathcal{E}_K$, and it is the Motzkin 
decomposition of Theorem \ref{motzkin}. 
Observe now that both $f^-$ and
$f^+$ belong to $\mathcal{E}^{\times}_K$, hence
$\sigma_q-a^{(1)}(T)$ is an antecedent of Frobenius of
$\sigma_q-a(T)$ over $\mathcal{E}_K$, and it is then 
solvable.

--- \emph{Step 6 : } Steps $1$, $2$, $3$, $4$ are still true for the antecedent. In
particular if
\begin{eqnarray}
R^-(1)&:=&Ray(\d-g^{(1),-}(T),1)\;,\\
R^+(1)&:=&Ray(\d-g^{(1),+}(T),1)\;,\\
R^0(1)&:=&Ray(\d-b_0,1)\;.
\end{eqnarray}
we must have $R^-(1)=R^{+}(1)>\omega$. Let
$R(1):=R^{-}(1)=R^{+}(1)$, then $R(1)=R^{1/p}$, by the property of
the antecedent. This implies $R>\omega^{1/p}$. 

Now the process can be iterated since 
$R(1)>\omega$, and we can again consider 
the antecedent. This shows that $R>\omega^{1/p^h}$,
for all $h\geq 0$, that is $R=1$. 
Proposition \ref{q-division of
the problem over Amice} hence follows. \end{proof} 

\begin{corollary}[q-analogue of \ref{g^+ is trivial}]\label{q-analogue of g^+ is trivial}
Let $q\in\mathrm{D}^-(1,1)$. Let $\sigma_q-a(T)$ be a solvable
differential equation. Let $a(T)=\lambda\cdot a^{-}(T)\cdot
a^{+}(T)$ be the Motzkin decomposition of $a(T)$. Then
$\lambda=q^{a_0}$, for some $a_0\in K$. Moreover, this operator is
isomorphic to $\sigma_q-\lambda\cdot a^{-}(T)$.
\end{corollary}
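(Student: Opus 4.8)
The plan is to reproduce, mutatis mutandis, the proof of Corollary~\ref{g^+ is trivial}, replacing the classical Dwork transfer theorem by its $q$-analogue, and then to pin down $\lambda$ by a short $p$-adic computation exploiting Hypothesis~\ref{hypothesis q-1<omega}. Throughout I work under $|q-1|<\omega$, as in force in this section.

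\emph{Triviality of $\sigma_q-a^+(T)$ and the isomorphism.} First I would invoke Proposition~\ref{q-division of the problem over Amice}: the three operators $\sigma_q-a^-(T)$, $\sigma_q-\lambda$ and $\sigma_q-a^+(T)$ are all solvable, and by Lemma~\ref{lambda=1 and N=0} the Motzkin decomposition has no monomial factor. Since $a^+(T)=1+\sum_{i\geq 1}\alpha_iT^i$ converges on the open disk, i.e.\ lies in $\a_K([0,1[)$, and since Steps~1 and~2 of the proof of that Proposition give $Ray(\sigma_q-a^+(T),\rho)=\rho$ both for $\rho$ near $0$ and for $\rho=1$ — hence, by log-concavity of the radius function, for every $\rho\in\,]0,1]$ — the $q$-analogue of the Dwork transfer theorem (cf.~\cite{DV-Dwork}) shows that the $q$-Taylor solution $y^+(T)\in 1+T\,K[[T]]$ of $\sigma_q-a^+(T)$ at $0$ converges on the whole open unit disk, i.e.\ $y^+(T)\in 1+T\,\a_K([0,1[)$. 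Its reciprocal $1/y^+(T)$ is the corresponding solution of the dual equation $\sigma_q-a^+(T)^{-1}$, which has the same radius of convergence and hence is also solvable; thus $1/y^+(T)\in 1+T\,\a_K([0,1[)$ as well. As $|\wc|_\rho$ is multiplicative and $|y^+|_\rho,|1/y^+|_\rho\geq 1$, one gets $|y^+|_\rho=|1/y^+|_\rho=1$ for all $\rho<1$; letting $\rho\to 1^-$ forces all coefficients of $y^+$ and $1/y^+$ to lie in $\O_K$, whence $y^+(T)\in\O_{\mathcal{E}_K}^\times\subset\mathcal{E}_K^\times$. Finally, substituting $Z=y^+(T)\,W$ in $\sigma_q(Z)=a(T)Z=\lambda\,a^-(T)\,a^+(T)Z$ and using $\sigma_q(y^+)=a^+(T)y^+$ turns the equation into $\sigma_q(W)=\lambda\,a^-(T)W$; since $y^+\in\mathcal{E}_K^\times$, this exhibits an isomorphism $\sigma_q-a(T)\cong\sigma_q-\lambda\,a^-(T)$ over $\mathcal{E}_K$.

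\emph{Identification of $\lambda$.} It remains to produce $a_0\in K$ with $\lambda=q^{a_0}$; we may assume $q\neq 1$, the case $q=1$ being trivial. Applying the small-radius Lemma~\ref{q-Young} to the solvable constant equation $\sigma_q-\lambda$ gives $|\lambda-1|\leq|q-1|$, which by Hypothesis~\ref{hypothesis q-1<omega} is $<\omega$. On $\{|x-1|<\omega\}$ the $p$-adic logarithm and exponential are mutually inverse isometries, so $\log\lambda$ and $\log q$ are defined with $|\log\lambda|=|\lambda-1|\leq|q-1|=|\log q|\neq 0$. Set $a_0:=\log\lambda/\log q\in\O_K\subset K$. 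Then $|a_0\log q|=|a_0|\,|q-1|\leq|q-1|<\omega$, so $q^{a_0}$ converges in the sense of Definition~\ref{Def : q^a}; by the formal identity $((q-1)+1)^{a_0}=\exp(a_0\log q)$ it equals $\exp(a_0\log q)=\exp(\log\lambda)=\lambda$. Hence $\lambda=q^{a_0}$, completing the argument.

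\emph{Main obstacle.} The delicate part is the first step: one must be sure that solvability of $\sigma_q-a^+(T)$ at $\rho=1$ propagates to all $\rho<1$ (the log-concavity input already used in Proposition~\ref{q-division of the problem over Amice}), and that the $q$-transfer theorem then yields convergence of $y^+$ on the full open unit disk; the remaining verification that $y^+$ and $1/y^+$ are bounded, hence lie in $\mathcal{E}_K^\times$, is a routine Newton-polygon computation. The second step is short, relying only on Lemma~\ref{q-Young} and Hypothesis~\ref{hypothesis q-1<omega}.
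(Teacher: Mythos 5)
Your proposal is correct and follows essentially the same route as the paper, whose proof simply refers back to Corollary \ref{g^+ is trivial}: apply the ($q$-)transfer theorem to conclude that the solution of $\sigma_q-a^+(T)$ converges on the open unit disk, note that its inverse solves the dual equation and is likewise convergent, deduce boundedness and hence that the solution lies in $\mathcal{E}_K^{\times}$, giving the isomorphism. Your additional paragraph pinning down $\lambda=q^{a_0}$ via $|\lambda-1|\leq|q-1|<\omega$ and $a_0:=\log\lambda/\log q$ supplies a detail the paper leaves implicit, and is consistent with its framework.
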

\begin{proof} See the proof of \ref{g^+ is trivial}. \end{proof}

\begin{remark}
The unique obstruction to generalize Proposition 
\ref{q-division of the problem over Amice} and Corollary 
\ref{q-analogue of g^+ is trivial}
to the case $|q-1|<1$ is represented by the so called
Weak Frobenius structure for $q-$difference modules 
over a disk with $|q-1|<1$. This is proved in 
\cite{DV-Dwork} in the case $|q-1|<\omega$. 

The assumption $|q-1|<\omega$ is also used in the 
sequel, where we consider logarithms of the 
exponentials. E.g. see Step 0 of Lemma 
\ref{q-criteria of solvability lemma}.
\end{remark}

\subsection{Criterion of Solvability}
\begin{lemma}[q-analogue of 
\ref{criteria of solvability lemma2}]
\label{q-criteria of solvability lemma}
Let $|q-1|<\omega$. Suppose that $a(T)=a^{+}(T)$ is
the Motzkin decomposition of $a(T)$. Write
$a^+(T)=\exp(\sum_{i\geq 1}a_iT^i)\in\a([0,1[)^{\times}$ (cf. the
settings of \ref{the settings}). Then the $q-$difference equation
$\sigma_q-a^+(T)$ is solvable if and only if there exists a family
$\{\lb_{n}\}_{n\in\J}$, where $\lb_n\in\W(\O_K)$ has phantom
components $\phi_{n}=(\phi_{n,0},\phi_{n,1},\ldots)$ 
satisfying
\begin{equation}\label{q-a_np^m=n phi_n,m}
a_{np^m}=\frac{(q^{np^m}-1)}{p^m}\cdot 
\phi_{n,m}\;,\qquad\textrm{
for all }n\in\J,\; m\geq 0\;,
\end{equation}
for all $n\in\J$, and all $m\geq 0$. In other words, the 
formal solution of the equation $\sigma_q(y)=ay$ is 
\eqref{formal solution explicit}
\begin{equation}
y(T)\;=\;
E(\sum_{n\in\J}\lb_nT^n,1)\;:=\;
\exp(\sum_{n\in\J}\sum_{m\geq
0}\phi_{n,m}\frac{T^{np^m}}{p^m})\;.
\end{equation}
\end{lemma}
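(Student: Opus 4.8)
The plan is to prove the two implications separately: the \emph{if} direction is a transfer–theorem argument, while the \emph{only if} direction follows the scheme of the proof of Lemma~\ref{criteria of solvability lemma2}, the genuinely new ingredient being a preliminary ``Step~$0$'' in which the hypothesis $|q-1|<\omega$ enters through the convergence of a logarithm. \emph{Sufficiency.} Given $\{\lb_n\}_{n\in\J}\subset\W(\O_K)$ with phantom vectors $(\phi_{n,0},\phi_{n,1},\dots)$, the Artin--Hasse exponential $y(T):=E(\sum_{n\in\J}\lb_nT^n,1)=\exp(\sum_{n\in\J}\sum_{m\ge0}\phi_{n,m}T^{np^m}/p^m)$ and its inverse $E(-\sum_{n\in\J}\lb_nT^n,1)$ converge on the open unit disk, so $y\in\a_K([0,1[)^\times$. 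Put $a^+(T):=y(qT)/y(T)$; this lies in $1+T\a_K([0,1[)$, belongs to $\mathcal{E}_K$ (its valuation polygon is flat, $y$ being invertible), is its own Motzkin decomposition, and satisfies $\log a^+(T)=\log y(qT)-\log y(T)=\sum_{n,m}\tfrac{q^{np^m}-1}{p^m}\phi_{n,m}T^{np^m}$, so its logarithmic coefficients are exactly those in \eqref{q-a_np^m=n phi_n,m} and $y$ is its solution. Since $y$ converges on the whole open unit disk, the transfer theorem gives $Ray(\sigma_q-a^+(T),\rho)=\rho$ for every $\rho<1$, i.e. $\sigma_q-a^+$ is solvable.

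For the \emph{only if} direction, assume $\sigma_q-a^+(T)$ solvable. In \emph{Step~$0$} one uses the $q$-small-radius Lemma~\ref{q-Young} to get $|a^+(T)-1|_1\le|q-1|<\omega$, whence the logarithm $\log a^+(T)=\sum_{i\ge1}a_iT^i$ converges and $|a_i|\le|q-1|$ for all $i$. By Lemma~\ref{valuation of q^d-1} (the case $j=0$) one has $|q^i-1|=|i|\,|q-1|$, so $\tilde a_i:=i\,a_i/(q^i-1)$ lies in $\O_K$ and the formal solution can be written $y(T)=\exp(\sum_{i\ge1}\tfrac{a_i}{q^i-1}T^i)=\exp(\sum_{i\ge1}\tilde a_iT^i/i)$. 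By the Artin--Hasse decomposition recalled before Lemma~\ref{criteria of solvability lemma2} this equals $E(\sum_{n\in\J}\lb_nT^n,1)$ with $\lb_n\in\W(K)$ of phantom components $\phi_{n,m}=\tilde a_{np^m}/n$, which a direct computation identifies with the relations \eqref{q-a_np^m=n phi_n,m}; it remains to prove that $\lb_n\in\W(\O_K)$, i.e. $|\lambda_{n,m}|\le1$ for all $n\in\J$, $m\ge0$.

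This last point is the descent, mimicking Steps~$1$--$3$ of Lemma~\ref{criteria of solvability lemma2}. \emph{Step~$1$:} since $n\in\J$ gives $|n|=1$, Step~$0$ already yields $|\lambda_{n,0}|=|\phi_{n,0}|=|a_n|/|q-1|\le1$; then $z^{(0)}(T):=E(\sum_n(\lambda_{n,0},0,0,\dots)T^n,1)$ converges on the open unit disk and, with $b^{(0),+}:=z^{(0)}(qT)/z^{(0)}(T)$, the equation $\sigma_q-b^{(0),+}(T)$ is solvable by the sufficiency part. \emph{Step~$2$ and iteration:} the quotient equation $\sigma_q-\bigl(a^+(T)/b^{(0),+}(T)\bigr)$, whose solution is $y/z^{(0)}$, is again solvable, and its logarithmic series involves only the monomials $T^{np^m}$ with $m\ge1$; hence $a^+(T)/b^{(0),+}(T)=\beta(T^p)$ for a formal series $\beta(S)$ whose $\varphi_p$-antecedent $\sigma_{q^p}-\beta(S)$ is solvable --- the antecedent by Frobenius exists and solvability is inherited by it because $|q^p-1|=|p|\,|q-1|<\omega$, by \cite{DV-Dwork} and Hypothesis~\ref{hypothesis q-1<omega}, exactly as in ``Step~$5$'' of Proposition~\ref{q-division of the problem over Amice}. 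Writing $\beta$ out, and using that $\phi_{n,m+1}-\lambda_{n,0}^{p^{m+1}}$ is $p$ times the $m$-th phantom component of the shifted Witt vector $(\lambda_{n,1},\lambda_{n,2},\dots)$ together with $q^{np^{m+1}}=(q^p)^{np^m}$, one sees that $\sigma_{q^p}-\beta$ is again an equation of the form treated in the statement, now with $q$ replaced by $q^p$ and the Witt vectors replaced by their shifts. Applying Steps~$0$--$1$ to it gives $|\lambda_{n,1}|\le1$, and iterating (at each stage $|q^{p^h}-1|=|p^h|\,|q-1|<\omega$) gives $|\lambda_{n,m}|\le1$ for all $m$.

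The main obstacle is precisely the iteration step: one must check that the $\varphi_p$-antecedent of the remainder equation is again a $q$-difference equation to which Step~$0$ applies, that its Witt datum is \emph{exactly} the shift $(\lambda_{n,m+1})_{m\ge0}$ of the original (so that the induction runs on $m$ and terminates), and that solvability passes to the antecedent --- all of which rests on the weak Frobenius structure of \cite{DV-Dwork}, available precisely because $|q-1|<\omega$. Alternatively, once the confluence over the open unit disk of \cite{Pu-q-Diff},\cite{DV-Dwork} is granted, the lemma reduces at once to Lemma~\ref{criteria of solvability lemma2} applied to the differential operator $\d-\sum_i\tilde a_iT^i$ produced in Step~$0$.
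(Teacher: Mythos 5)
Your proof is correct and follows essentially the same route as the paper: sufficiency via convergence of the Artin--Hasse exponential on the open unit disk, and necessity via the ``Step $0$'' logarithm argument ($|a^+-1|_1\le|q-1|<\omega$), quotienting by the $m=0$ layer, passing to the ramification antecedent, and iterating. The only (harmless) difference is presentational: you realize the antecedent as the $\sigma_{q^p}$-equation $\sigma_{q^p}-\beta(S)$ with Witt datum the shift $(\lambda_{n,1},\lambda_{n,2},\dots)$, whereas the paper keeps a $\sigma_q$-equation $\sigma_q-a^{(1)}(T)$ with $\prod_{j}a^{(1)}(q^jT^p)=a^+/a^{(0)}$; the two are the $p$-th iterate of one another and yield the same bound $|\phi_{n,1}-\lambda_{n,0}^p|\le|p|$.
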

\begin{proof}
The formal
series $E(\sum_{n\in\J}\lb_nT^n,1)$ belongs to
$1+T\cdot\mathfrak{p}_K[[T]]\subset\mathcal{E}_K$, and it is solution
of the equation $L:=\sigma_q-\exp(\sum_{n\in\J}\sum_{m\geq
0}\phi_{n,m}(q^{np^m}-1)T^{np^m})$. 
Since this 
exponential
converges in the open unit disk, then 
$Ray(L,\rho)=\rho$, for all
$\rho<1$. Hence, by continuity of the radius, 
$Ray(L,1)=1$ and $L$
is solvable. 

Conversely, suppose that $\sigma_q-a^+(T)$ is
solvable, then the Witt vectors
$\lb_n=(\lambda_{n,0},\lambda_{n,1},\ldots)$ are 
defined by the relation \eqref{q-a_np^m=n phi_n,m}. For example, for all $n\in\J$
we have
\begin{equation}
\lambda_{n,0} = \frac{a_{n}}{(q^{n}-1)} \quad,\qquad \lambda_{n,1}
= \frac{1}{p}\left(\frac{p\cdot a_{np}}{(q^{np}-1)} -
\Bigl(\frac{a_n}{(q^{n}-1)}\Bigr)^{\!p} \right)\;.
\end{equation}
We must show that $|\lambda_{n,m}|\leq 1$, for all 
$n\in\J$, $m\geq 0$.\\

--- \emph{Step $0$ : We have $|\lambda_{n,0}|=|\phi_{n,0}|\leq 1$ for all
$n\in\J$.}\\

This results by the small radius Lemma \ref{q-Young} as 
follows: denote the argument of the
exponential $a^+(T)$ by 
$\phi^+_q(T):=\sum_{n\in\J}\sum_{m\geq
0}\phi_{n,m}(q^{np^m}-1)T^{p^m}/p^m$. 
By Lemma \ref{q-Young}, one has
$|a^+(T)-1|_1=|\exp(\phi_q^+)-1|_1\leq |q-1|$. Since
$|q-1|<\omega$, then $|\exp(\phi_q^+)-1|_1<\omega$, hence
$\phi_q^+=\log(\exp(\phi_{q}^+))$ and
$|\phi_q^+|_1=|\exp(\phi_q^+)-1|_1\leq|q-1|$. This implies
$|\phi_{n,m}(q^{np^m}-1)/p^m|\leq |q-1|$, for all $n\in\J$ and all
$m\geq 0$. In particular, for $m=0$ we have
$|\lambda_{n,0}|=|\phi_{n,0}|\leq 1$, for all $n\in\J$.\\

--- \emph{Step $1$ :} By Step 0 the exponential
$$E(\sum_{n\in\J}(\lambda_{n,0},0,0,\ldots)T^{n},1)=
\exp(\sum_{n\in\J}\sum_{m\geq
0}\lambda_{n,0}^{p^m}\frac{T^{np^m}}{p^m})$$ converges in the unit
disk and is solution of the operator $Q^{(0)}:=\sigma_q-
a^{(0)}(T)$, with $a^{(0)}(T)=\exp(\sum_{n\in\J}\sum_{m\geq
0}\lambda_{n,0}^{p^m}(q^{np^m}-1)\frac{T^{np^m}}{p^m})$.
$Q^{(0)}$ is then solvable.\\

--- \emph{Step 2 :}  The tensor product operator $\sigma_q - (a^+(T)/a^{(0)}(T))$
is again solvable. We have explicitly
$$\frac{a^+(T)}{a^{(0)}(T)}=
\exp(\sum_{n\in\J}\sum_{m\geq
0}(\phi_{n,m}-\lambda_{n,0}^{p^m})(q^{np^m}-1)\frac{T^{np^m}}{p^m})\;.$$
This operator corresponds to the family of  Witt vectors
$\{\lb_n-(\lambda_{n,0},0,0,\ldots)=(0,\lambda_{n,1},\lambda_{n,2},\ldots)\}_{n\in\J}$.
Observe that the coefficient corresponding to $m=0$ is equal to
$0$, for all $n\in\J$. This leads us to compute easily the
``antecedent by ramification'' of $\sigma_q - a^+(T)/a^{(0)}(T)$,
namely this antecedent is given by $\sigma_q-a^{(1)}(T)$, with
$$a^{(1)}(T):=\exp(\sum_{n\in\J}\sum_{m\geq 0}
(\phi_{n,m}-\lambda_{n,0}^{p^m})(q^{np^m}-1)\frac{(q-1)}{(q^{p}-1)}\frac{T^{np^{m-1}}}{p^m})\;.$$
In other words, we have
$$a^{(1)}(T^p)\cdot a^{(1)}(qT^p)\cdot a^{(1)}(q^2T^p)\cdots
a^{(1)}(q^{p-1}T^p)=\frac{a^+(T)}{a^{(0)}(T)}\;.$$

--- \emph{Step 3 :} The antecedent
is again solvable, hence, as in Step $0$, we find
$|\phi_{n,1}-\lambda_{n,0}^p|\leq |q^p-1|=|p|$, which implies
$|\lambda_{n,1}|\leq 1$. 

The process can be iterated indefinitely.
\end{proof}

\if{
\begin{remark}\label{what happens when q-1>1 ?}
For $|q-1|\geq \omega$ we do not know whether there 
exist solvable rank one
$q-$difference equations that are not strongly confluent 
(cf. terminology of \cite{Pu-q-Diff}). Such an equation is 
such that its solution $y(T)$ is an exponential of type
$E(\sum_{n\in\J}\lb_nT^n,1)$, which lies in 
$\O_K[[T]]$ but such
that $\lb_n\in \W(K)-\W(\O_K)$, for some $n\in\J$. 
The author does
not know examples of such Witt vectors.
\end{remark}
}\fi

\begin{remark}[q-analogue of Remark 
\ref{discussion}]\label{q-discussion} 
We shall now consider the general
case of an equation $\sigma_q-a(T)$, with 
$a(T)=\lambda\cdot
a^{-}(T)a^{+}(T)\in\mathcal{E}_K$, and get a criteria 
of solvability. 
We proceed as in Remark \ref{discussion}. Suppose given two
families $\{\lb_{-n}\}_{n\in\J}$ and $\{\lb_{n}\}_{n\in\J}$, with
$\lb_n\in\W(\O_K)$. By Lemma 
\ref{a^+ belong always to E} below, $a^+(T)$
belongs always to $\mathcal{E}_K$. On the other hand, 
we will prove 
(cf. Lemma \ref{q-criteria for belong to Amice}) that the 
series
$a^-(T)$ belongs to $\mathcal{E}_K$ if and only if the 
family
$\{\lb_{-n}\}_{n\in\J}$ belongs to 
$\mathrm{Conv}(\mathcal{E}_K)$ 
(cf. Definition \ref{Conv}).
\end{remark}

\begin{notation}\label{q-notations}
Let $\sigma_q- a(q,T)$, $a(q,T)\in\mathcal{E}_K$ be a solvable
differential equation. Let $a(q,T):=q^{a_0}\cdot a^-(q,T)\cdot
a^+(q,T)$, $a_0\in\mathbb{Z}_p$, be the Motzkin decomposition of
$a(q,T)$. In the notations of Lemma \ref{q-criteria of solvability
lemma} we can write
\begin{equation} a^-(q,T)=\exp(\phi_q^-(T))\quad,\quad
a^+(q,T)=\exp(\phi_q^+(T))\;,
\end{equation}
\begin{eqnarray}\label{gygy}
\phi_q^-(T)&:=&\sum_{n\in\J}\sum_{m\geq
0}\phi_{-n,m}(q^{-np^m}-1)\frac{T^{-np^m}}{p^m}\;,\\
\label{gygyg}\phi_q^+(T)&:=&\sum_{n\in\J}\sum_{m\geq
0}\phi_{n,m}(q^{np^m}-1)\frac{T^{np^m}}{p^m}\;.
\end{eqnarray}
For all $n\in\J$ we denote by 
$\bs{\lambda}_n,\bs{\lambda}_{-n}\in\W(K)$ the 
Witt vectors with phantom vectors 
$\lr{\phi_{n,0},\phi_{n,1},\ldots}$ and
$\lr{\phi_{-n,0},\phi_{-n,1},\ldots}$ respectively.
In other words, the solution of $\sigma_q-a(q,T)$ can be
represented by the symbol
\begin{equation}\label{q-formal solution}
y(T):=T^{a_0}\cdot\exp(\sum_{n\in\J}\sum_{m\geq
0}\phi_{-n,m}\frac{T^{-np^m}}{p^m})\cdot\exp(\sum_{n\in\J}\sum_{m\geq
0}\phi_{n,m}\frac{T^{np^m}}{p^m})\;,
\end{equation}
as well as for differential equations.
\end{notation}
\begin{lemma}\label{a^+ belong always to E}
Let $|q-1|<\omega$. Let $\{\lb_n\}_{n\in\J}$ be a family of Witt
vectors such that $\lb_n\in\W(\O_K)$. Then $a^+(T)$ belongs to
$\mathcal{E}_K$.
\end{lemma}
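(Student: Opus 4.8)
The plan is to prove the stronger statement $a^+(T)\in 1+T\,\mathfrak{p}_K[[T]]$, which is contained in $\mathcal{E}_K$. The key observation is that, under $|q-1|<\omega$, the ``$q$-divided numbers'' $(q^{np^m}-1)/p^m$ are \emph{uniformly} of absolute value $|q-1|$, so that $\phi_q^+(T)$ is a power series whose coefficients all have norm $\leq |q-1|<\omega$; the exponential of such a series is then automatically bounded. The hypothesis $|q-1|<\omega$ enters exactly at this uniformity — it is the cancellation of the denominator $p^m$ against the factor $|p^m|$ appearing in $|q^{np^m}-1|$ — and this is where the restriction on $q$ is really needed.

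First I would bound the coefficients of $\phi_q^+(T)=\sum_{n\in\J}\sum_{m\geq 0}\phi_{n,m}(q^{np^m}-1)\,T^{np^m}/p^m$. Since $\lb_n\in\W(\O_K)$, all phantom components satisfy $|\phi_{n,m}|\leq 1$. Applying Lemma~\ref{valuation of q^d-1} with $j=0$ (legitimate because $|q-1|<\omega$), $d=np^m$ and $\alpha=n$ coprime to $p$, one gets $|q^{np^m}-1|=|p^m|\cdot|q-1|$, hence $\bigl|\phi_{n,m}(q^{np^m}-1)/p^m\bigr|\leq |q-1|$ for all $n\in\J$, $m\geq 0$. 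As distinct pairs $(n,m)$ with $(n,p)=1$ yield distinct exponents $np^m$, this shows $\phi_q^+(T)=\sum_{k\geq 1}c_k T^k$ with $|c_k|\leq |q-1|<\omega$; in particular $|\phi_q^+(T)|_1\leq |q-1|<\omega$. Then I would pass to the exponential: writing $f:=\phi_q^+(T)$ and $|j!|=\omega^{j-S_j}$ with $S_j\geq 1$ the sum of the base-$p$ digits of $j\geq 1$ (cf. the proof of Lemma~\ref{coefficients of exponential}), one has $|f^j/j!|_1=|f|_1^j\,\omega^{S_j-j}=(|f|_1/\omega)^j\,\omega^{S_j}\leq |f|_1$ for every $j\geq 1$, because $|f|_1<\omega$ makes $(|f|_1/\omega)^j\leq |f|_1/\omega$ while $\omega^{S_j}\leq\omega$. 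Hence $a^+(T)-1=\sum_{j\geq 1}f^j/j!$ satisfies $|a^+(T)-1|_1\leq |f|_1<\omega$, so $a^+(T)\in 1+T\,\mathfrak{p}_K[[T]]\subset\mathcal{E}_K$.

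I do not expect a genuine obstacle: this is a boundedness estimate, and essentially all the content is the correct invocation of Lemma~\ref{valuation of q^d-1}. The only point requiring attention is the uniformity in $m$ of $\bigl|(q^{np^m}-1)/p^m\bigr|=|q-1|$: without it — that is, for $\omega\leq|q-1|<1$ — Lemma~\ref{valuation of q^d-1} only gives $\bigl|(q^{np^m}-1)/p^m\bigr|=|q-1|^{p^i}/|p^i|$ with $i=\min(m,j)$ and $j\geq 1$, which stays bounded but can exceed $\omega$, so that $\phi_q^+(T)$ no longer has coefficients $<\omega$ and the elementary estimate on $\exp$ breaks down. This is consistent with the later remarks that the confluence results genuinely require $|q-1|<\omega$.
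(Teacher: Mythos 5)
Your proof is correct, but it takes a different route from the paper's. The paper writes $a^+(q,T)=\prod_{n\in\J}E([q^{n}-1]_{P}\cdot \lb_{n},T)$, where $[q^n-1]_P$ is the Witt vector (for the Lubin--Tate series $P(X)=(X+1)^p-1$) with phantom components $\ph{q^n-1,q^{np}-1,\ldots}$; since $[q^n-1]_P\cdot\lb_n\in\W(\O_K)$, the integrality of the Artin--Hasse exponential gives $a^+\in 1+T\O_K[[T]]\subset\mathcal{E}_K$. You instead estimate directly: Lemma \ref{valuation of q^d-1} with $j=0$ gives $|(q^{np^m}-1)/p^m|=|q-1|$ uniformly, so $|\phi_q^+|_1\leq|q-1|<\omega$, and the elementary bound $|f^j/j!|_1\leq|f|_1$ for $|f|_1<\omega$ yields the (slightly sharper) conclusion $a^+\in 1+T\mathfrak{p}_K[[T]]$. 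Your computation is sound throughout — the pairs $(n,m)$ do index distinct exponents $np^m$, the Gauss norm is (sub)multiplicative, and the formal exponential is coefficientwise a finite sum. The trade-off is exactly the one you identify at the end: your argument is more elementary (no Witt-vector or Artin--Hasse integrality input) but genuinely consumes the hypothesis $|q-1|<\omega$, whereas the paper's Witt-vector argument only needs $|q^n-1|\leq 1$ and would survive in the range $\omega\leq|q-1|<1$, which matters for the author's stated hope of eventually removing that restriction elsewhere in the paper. Under the lemma's stated hypothesis both proofs are complete.
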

\begin{proof} We use the notations of \cite{Rk1}. 
Let
$P(X)=(X+1)^p-1$ be the Lubin-Tate series 
corresponding to the
formal multiplicative group $\hat{\mathbb{G}}_m$. The 
phantom vector of $[q^n-1]_{P}\in\W(\O_K)$ is then
$\ph{q^n-1,q^{np}-1,q^{np^2}-1,\cdots}$, for all 
$n\in\mathbb{Z}$. Then, for all $n\in\J$, the phantom
vector of $[q^n-1]_{P}\cdot \lb_{n}$  is
$$\ph{(q^{n}-1)\phi_{n,0}\;,\;(q^{np}-1)\phi_{n,1}\;,\;(q^{np^2}-1)\phi_{n,2}\;,\;\ldots}\;.$$
Hence we can express $a^+(q,T)$ as a product of Artin-Hasse
exponentials
$$a^+(q,T)=\prod_{n\in\J}E([q^{n}-1]_{P}\cdot \lb_{n}\;,\;T)\;.$$
Since $[q^{n}-1]_{P}\cdot\lb_n\in\W(\O_K)$, then 
for all $n\in\J$ the Artin-Hasse exponential $E([q^{n}-1]_{P}\cdot
\lb_{n}\;,\;T)$ belongs to $1+T\O_K[[T]]$, which is contained in
$\mathcal{E}_K$. 
\end{proof}
\begin{lemma}[q-analogue of Proposition 
\ref{criteria for belong to Amice}]\label{q-criteria for belong to Amice}\label{a(T) belong to E_k}
Let $|q-1|<\omega$. Let $\{\lb_{-n}\}_{n\in\J}\in\W(\O_K)$ be a
family of Witt vectors. Then the
following assertions are equivalent:\\
$(1)$ The series $a^-(T)=\exp(\phi_q^-(T))$ belongs to $\mathcal{E}_K$;\\
$(2)$ $\phi_q^-(T)$ belongs to $\mathcal{E}_K$;\\
$(3)$ $\{\lb_{-n}\}_{n\in\J}\in\mathrm{Conv}(\mathcal{E})$ (cf. Definition \ref{Conv}).
\end{lemma}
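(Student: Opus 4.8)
The plan is to reduce the equivalence to the non-$q$ case already settled in Proposition \ref{criteria for belong to Amice}, the bridge being a single observation: under the standing Hypothesis \ref{hypothesis q-1<omega}, the ``$q$-twist'' appearing in $\phi_q^-(T)$ multiplies the phantom components only by a factor of \emph{constant} absolute value.

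\textbf{The $q$-twist has constant size; proof of $(2)\Leftrightarrow(3)$.} First I would record that, for $n\in\J$ and $m\geq 0$, the coefficient of $T^{-np^m}$ in $\phi_q^-(T)$, namely $c_{-np^m}:=\phi_{-n,m}(q^{-np^m}-1)/p^m$, satisfies $|c_{-np^m}|=|q-1|\cdot|\phi_{-n,m}|\leq|q-1|$. Indeed $|q-1|<\omega$ puts us in the case $j=0$ of Lemma \ref{valuation of q^d-1} (with $d=-np^m$, so $i=\min(m,0)=0$), giving $|q^{-np^m}-1|=|p^m|\cdot|q-1|$; and $\phi_{-n,m}\in\O_K$ because $\lb_{-n}\in\W(\O_K)$. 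Hence $|\phi_q^-(T)|_1\leq|q-1|<\omega$, and the family $(c_{-np^m})$ of coefficients of $\phi_q^-(T)$ differs, coefficient by coefficient, only by the fixed factor $|q-1|$ from the family $(-n\phi_{-n,m})$ of coefficients of the series $g^-(T)$ of Proposition \ref{criteria for belong to Amice} (recall $|{-n}|=1$ for $n\in\J$). Both are bounded, and $c_{-np^m}\to 0$ as $np^m\to\infty$ precisely when $-n\phi_{-n,m}$ does; so $\phi_q^-(T)\in\mathcal{E}_K$ if and only if $g^-(T)\in\mathcal{E}_K$. By Proposition \ref{criteria for belong to Amice} the latter is equivalent to condition \eqref{lb<1 and lim_n lb_n,m =0}, i.e. to $\{\lb_{-n}\}_{n\in\J}\in\mathrm{Conv}(\mathcal{E})$ (Definition \ref{Conv}). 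This proves $(2)\Leftrightarrow(3)$.

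\textbf{Passing between $\phi_q^-$ and $a^-=\exp(\phi_q^-)$.} For $(2)\Rightarrow(1)$: if $\phi_q^-(T)\in\mathcal{E}_K$ then $|\phi_q^-(T)|_1\leq|q-1|<\omega$, so $\exp(\phi_q^-(T))=\sum_{k\geq 0}\phi_q^-(T)^k/k!$ converges in the Banach algebra $(\mathcal{E}_K,|.|_1)$, since $|\phi_q^-(T)^k/k!|_1=|\phi_q^-(T)|_1^k/|k!|\leq\omega\,(|q-1|/\omega)^k\to 0$ (using $|k!|^{-1}\leq\omega^{1-k}$); hence $a^-(T)\in\mathcal{E}_K$. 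For $(1)\Rightarrow(2)$ I would first establish the a priori bound $|a^-(T)-1|_1\leq|q-1|<\omega$, valid for \emph{every} $\{\lb_{-n}\}_{n\in\J}\subset\W(\O_K)$: writing $a^-(T)-1=\sum_{N\geq 1}b_{-N}T^{-N}$, each $b_{-N}$ is a finite sum of monomials $\tfrac{1}{k!}c_{-i_1}\cdots c_{-i_k}$ with $i_1+\cdots+i_k=N$, of norm at most $\omega^{1-k}|q-1|^k=\omega\,(|q-1|/\omega)^k\leq|q-1|$. Granting this, $\log(a^-(T))=\sum_{k\geq 1}(-1)^{k-1}(a^-(T)-1)^k/k$ converges in $(\mathcal{E}_K,|.|_1)$ as soon as $a^-(T)\in\mathcal{E}_K$; and since $\mathrm{char}\,K=0$, the formal identity $\log\circ\exp=\mathrm{id}$ on series in $T^{-1}$ with zero constant term yields $\log(a^-(T))=\phi_q^-(T)$ (the coefficient-wise computation agrees with the limit in $\mathcal{E}_K$ because each coefficient functional is $|.|_1$-continuous). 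Hence $\phi_q^-(T)\in\mathcal{E}_K$.

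I expect the main obstacle to be precisely this last step: one is not allowed to form $\log(a^-(T))$ before knowing $|a^-(T)-1|_1<\omega$, and producing that uniform bound is where both $\lb_{-n}\in\W(\O_K)$ and the hypothesis $|q-1|<\omega$ enter essentially (it is also what makes $\exp$ converge in the first place). Everything else is bookkeeping layered on Lemma \ref{valuation of q^d-1} and Proposition \ref{criteria for belong to Amice}.
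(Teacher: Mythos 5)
Your proof is correct, and its overall architecture coincides with the paper's: the equivalence $(2)\Leftrightarrow(3)$ is reduced to Proposition \ref{criteria for belong to Amice} via the observation that the $q$-twist rescales each coefficient by the constant factor $|q-1|$ (the paper records exactly the estimate $|(q^{-np^m}-1)\phi_{-n,m}p^{-m}|=|q-1|\,|\phi_{-n,m}|$ and then simply cites the proposition), and $(1)\Leftrightarrow(2)$ is handled by $\exp$ and $\log$ on the disk of radius $\omega$ in $(\mathcal{E}_K,|.|_1)$. The one place you genuinely diverge is the step you correctly single out as the crux of $(1)\Rightarrow(2)$, namely the a priori bound $|a^-(T)-1|_1\leq|q-1|<\omega$ needed before $\log$ may be applied. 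You obtain it by a direct coefficient-by-coefficient estimate on the formal expansion of $\exp(\phi_q^-(T))$, using $|1/k!|\leq\omega^{1-k}$; the paper instead exploits the fact that $\phi_q^-$, having only negative powers, automatically converges on every annulus $[\rho,\infty]$ with $\rho>1$, gets $|\exp(\phi_q^-(T))-1|_\rho=|\phi_q^-(T)|_\rho\leq|q-1|$ there, and passes to $\rho=1$ by continuity of the norms. Your version is more elementary and self-contained (no appeal to the auxiliary rings $\a_K([\rho,\infty])$ or to a limiting argument), at the cost of an explicit combinatorial bookkeeping of the monomials $\tfrac{1}{k!}c_{-i_1}\cdots c_{-i_k}$; the paper's version is shorter once one accepts that the identity $a^-=\exp(\phi_q^-)$ holds analytically on $\{|T|>1\}$. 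Both yield the same estimate, and the remainder of your argument (convergence of $\log$ for $|a^--1|_1<1$, identification of $\log(a^-)$ with $\phi_q^-$ by continuity of the coefficient functionals) is sound.
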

\begin{proof} 
The equivalence $(2)\Leftrightarrow (3)$ follows from 
Proposition \ref{criteria for belong to Amice}. 

We firstly observe that since by assumption we have
$\lb_{-n}\in\W(\O_K)$, then $|\phi_{-n,m}|\leq 1$ and 
so
\begin{equation}\label{phi_q^-<omega}
|(q^{-np^m}-1)\phi_{-n,m}p^{-m}| = 
|q-1|\cdot|\phi_{-n,m}|
\leq|q-1|< \omega\;.
\end{equation}
Hence $|\phi_q^-(T)|_1\leq |q-1|<\omega$. 

Now assume that
$\phi_q^-(T)\in\mathcal{E}_K$. Since the exponential 
series converges in the disk 
$D_{\mathcal{E}_K}^-(0,\omega):= 
\{f\in\mathcal{E}_K\;|\;|f|_1<\omega\}$,
then $\exp(\phi_q^-(T))\in\mathcal{E}_K$. 

Conversely, assume that
$\exp(\phi_q^-(T))\in\mathcal{E}_K$. Since, for all $\rho>1$,
$|\phi_q^-(T)|_\rho<|q-1|$, then
$\phi_q^-(T)\in\mathrm{D}_{\a_K([\rho,\infty])}^{-}(0,\omega):=\{f\in\a_K([\rho,\infty])\;|\;|f|_\rho<\omega\}$,
and hence  $\exp(\phi_q^-(T))$ converge in $\a_K([\rho,\infty])$,
for all $\rho>1$. Moreover,
$|\exp(\phi_q^-(T))-1|_\rho=|\phi_q^-(T)|_\rho\leq|q-1|<\omega$,
for all $\rho>1$. By continuity, we have
$|\exp(\phi_q^-(T))|_1=|\phi_q^-(T)|_1 \leq |q-1|<\omega$. Now the
logarithm converges in the disk
$D_{\mathcal{E}_K}(1,1^-):=\{f\in\mathcal{E}_K\;|\;|f|_1<1\}$,
hence $\phi_q^-(T)=\log\exp(\phi_q^-(T))$. Then $\phi_q^-(T)$
belongs to $\mathcal{E}_K$.
\end{proof}

\begin{corollary}[Criterion of solvability for $q$-difference
equations]\label{criterion of solvability for q-difference} The
equation $\sigma_q-a(q,T)$, with $a(q,T)=\lambda_q T^N a^-(q,T)
a^+(q,T)$, with
$$a^-(T):=\exp(\sum_{i\leq -1}a_iT^i)\quad,\quad a^+(T):=\exp(\sum_{i\geq
1}a_iT^i)\;,$$ is solvable if and only if the following conditions
are verified
\begin{enumerate}
 \item $\lambda=q^{a_0}$, with $a_0\in\mathbb{Z}_p$\;;
 \item $N=0$\;;
 \item There exist two families $\{\lb_{-n}\}_{n\in\J}$ and
 $\{\lb_{n}\}_{n\in\J}$, with $\lb_{-n},\lb_{n}\in\W(\O_K)$, for
 all $n\in\J$, such that
\begin{equation}
a_{-np^m}=\frac{(q^{-np^m}-1)}{p^m}\cdot\phi_{-n,m}\quad,\quad
a_{np^m}=\frac{(q^{np^m}-1)}{p^m}\cdot\phi_{n,m}\;,
\end{equation}
 for all $n\in\J$ and all $m\geq 0$;
 \item $\{\lb_{-n}\}_{n\in\J}\in\mathrm{Conv}(\mathcal{E})$.
\end{enumerate}
In other words, the formal solution of this equation can be
represented by the symbol \eqref{q-formal solution} in which the
family $\{\lb_{-n}\}_{n\in\J}$ belongs to
$\mathrm{Conv}(\mathcal{E}_K)$, and $a(T)=\exp(\phi^-_q(T))\cdot
q^{a_0}\cdot\exp(\phi^+_q(T))$, where $\phi^-_q(T)$, $\phi^+_q(T)$
are defined in \eqref{gygy} and \eqref{gygyg}.\hfill$\Box$
\end{corollary}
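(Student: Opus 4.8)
The plan is to assemble the statement from the technical lemmas already established, treating the two implications in turn.

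\emph{Necessity.} Suppose $\sigma_q-a(q,T)$ is solvable. First I would invoke Lemma~\ref{lambda=1 and N=0} to get $N=0$ and $|\lambda_q-1|<1$, which is $(2)$. Then Proposition~\ref{q-division of the problem over Amice} shows that each of $\sigma_q-a^-(T)$, $\sigma_q-\lambda_q$, $\sigma_q-a^+(T)$ is solvable, and Corollary~\ref{q-analogue of g^+ is trivial} gives $\lambda_q=q^{a_0}$ for some $a_0\in K$. To upgrade this to $a_0\in\mathbb{Z}_p$ I would re-run the transfer-theorem argument from the proof of Corollary~\ref{g^+ is trivial}: the Taylor solution of $\sigma_q-q^{a_0}$ at $0$ is invertible, bounded, and lies in $\mathcal{E}_K$, which forces the exponent to be a $p$-adic integer; this is $(1)$.

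Still in the necessity direction, I would feed the solvable equation $\sigma_q-a^+(T)$ into Lemma~\ref{q-criteria of solvability lemma}, producing $\{\lb_n\}_{n\in\J}\subset\W(\O_K)$ with $a_{np^m}=\frac{q^{np^m}-1}{p^m}\phi_{n,m}$. For the negative part I would regard $\sigma_q-a^-(T)$ as an equation over $]1,\infty]$, as in Remark~\ref{q-discussion}, and apply the $T^{-1}$-mirror of Lemma~\ref{q-criteria of solvability lemma} to obtain $\{\lb_{-n}\}_{n\in\J}\subset\W(\O_K)$ with $a_{-np^m}=\frac{q^{-np^m}-1}{p^m}\phi_{-n,m}$; together these give $(3)$. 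Finally, since $a^-(T)$ is the Motzkin factor it already lies in $\mathcal{E}_K$, so Lemma~\ref{q-criteria for belong to Amice} forces $\{\lb_{-n}\}_{n\in\J}\in\mathrm{Conv}(\mathcal{E})$, which is $(4)$.

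\emph{Sufficiency.} Conversely, assume $(1)$--$(4)$. By Lemma~\ref{a^+ belong always to E}, $a^+(T)\in\mathcal{E}_K$, and by Lemma~\ref{q-criteria for belong to Amice} (using $(4)$), $a^-(T)\in\mathcal{E}_K$; hence $a(q,T)=q^{a_0}a^-(T)a^+(T)$ is a well-defined element of $\mathcal{E}_K$ with $N=0$. Lemma~\ref{q-criteria of solvability lemma} and its $T^{-1}$-analogue show $\sigma_q-a^+(T)$ and $\sigma_q-a^-(T)$ are solvable, while $\sigma_q-q^{a_0}$ is solvable since $a_0\in\mathbb{Z}_p$ (transfer theorem once more). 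As $\sigma_q-a(q,T)$ is the tensor product of these three and solvability is stable under tensor products, $Ray(\sigma_q-a(q,T),1)\geq 1$, hence equals $1$. The representation of the formal solution by the symbol \eqref{q-formal solution}, and the identity $a(T)=\exp(\phi^-_q(T))\cdot q^{a_0}\cdot\exp(\phi^+_q(T))$, are then a restatement of Notation~\ref{q-notations} together with $(3)$.

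The substantive input is entirely in the cited lemmas; what is left is bookkeeping, namely matching the Motzkin factorization with the Witt-vector parametrization and noting that the ``positive'' solvability Lemma~\ref{q-criteria of solvability lemma} transports verbatim to the half-line $]1,\infty]$. The one place I expect to need genuine care is the passage from $a_0\in K$ to $a_0\in\mathbb{Z}_p$, which is not literally part of Corollary~\ref{q-analogue of g^+ is trivial} and must be drawn from the boundedness/transfer argument; no new estimate should be required.
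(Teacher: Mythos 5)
Your proposal is correct and follows essentially the same route the paper intends: the corollary is stated with a $\Box$ precisely because it is the assembly, sketched in Remark \ref{q-discussion}, of Lemma \ref{lambda=1 and N=0}, Proposition \ref{q-division of the problem over Amice}, Corollary \ref{q-analogue of g^+ is trivial}, Lemma \ref{q-criteria of solvability lemma} (and its $T^{-1}$-mirror), Lemma \ref{a^+ belong always to E}, and Lemma \ref{q-criteria for belong to Amice}, plus the tensor-product inequality for the converse. Your flagged point about upgrading $a_0\in K$ to $a_0\in\mathbb{Z}_p$ is handled in the paper exactly as you suggest, by the same boundedness/transfer argument used for Corollary \ref{g^+ is trivial}.
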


\begin{corollary}[canonical extension for $q-$difference]
Let $\sigma_q-Mod(\a_K([1,\infty]))^{\mathrm{sol}}_{rk=1}$ be the
category of rank one $\sigma_q-$modules over $\a_K([1,\infty])$,
solvable at all $\rho\geq 1$. The scalar extension functor
$\sigma_q\textrm{-}\mathrm{Mod}(\a_K([1,\infty]))^{\mathrm{sol}}_{rk=1}
\to
\sigma_q\textrm{-}\mathrm{Mod}(\mathcal{E}_K)^{\mathrm{sol}}_{rk=1}$
is an equivalence.
\end{corollary}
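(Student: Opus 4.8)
The plan is to mimic the proof of Corollary \ref{canonical ext over amice}, substituting each differential ingredient by the $q$-difference counterpart established above.

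\textbf{Essential surjectivity.} Let $\sigma_q-a(q,T)$ be a rank one $q$-difference equation over $\mathcal{E}_K$, solvable at $\rho=1$. By Lemma \ref{lambda=1 and N=0} and Corollary \ref{criterion of solvability for q-difference} its Motzkin decomposition has the form $a(q,T)=q^{a_0}\cdot a^-(q,T)\cdot a^+(q,T)$ with $N=0$ and $a_0\in\mathbb{Z}_p$, and by Corollary \ref{q-analogue of g^+ is trivial} the equation is isomorphic over $\mathcal{E}_K$ to $\sigma_q-q^{a_0}a^-(q,T)$. The coefficient $q^{a_0}a^-(q,T)$ lies in $\a_K([1,\infty])$ since $a^-(q,T)\in 1+T^{-1}\a_K([1,\infty])^\times$ and $q^{a_0}\in K$, so it defines a rank one $q$-difference equation over $\a_K([1,\infty])$. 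I would then check it is solvable at every $\rho\geq 1$: Step 1 of Proposition \ref{q-division of the problem over Amice} (via \cite[3.6]{DV-Dwork}) gives that $\sigma_q-a^-(q,T)$ has a convergent solution at $\infty$, hence $Ray(\sigma_q-a^-(q,T),\rho)=\rho$ for $\rho\gg 1$; Proposition \ref{q-division of the problem over Amice} gives solvability at $\rho=1$; and since $r\mapsto\log(Ray(\sigma_q-a^-(q,T),e^r)/e^r)$ is concave and $\leq 0$, vanishing both at $r=0$ and for $r$ large, it vanishes identically on $[0,+\infty)$. As $\sigma_q-q^{a_0}$ is solvable at all $\rho$ by Lemma \ref{q-radius of constant} together with Proposition \ref{q-division of the problem over Amice}, the tensor product $\sigma_q-q^{a_0}a^-(q,T)$ is solvable at all $\rho\geq 1$; thus it is an object of the source category, mapping under scalar extension to an equation isomorphic to $\sigma_q-a(q,T)$.

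\textbf{Full faithfulness.} For $M,N$ in the source category, represent them by operators $\sigma_q-q^{b_M}a_M^-(q,T)$ and $\sigma_q-q^{b_N}a_N^-(q,T)$ as above; one has $\Hom_{\sigma_q}(M,N)\simeq M^\vee\otimes N$, and an element of the right-hand side is a solution of $\sigma_q-(a_N/a_M)$, i.e. of $\sigma_q-q^{a_0}(a_N^-/a_M^-)$ with $a_0=b_N-b_M$, an operator whose Motzkin decomposition has $N=0$ and trivial positive part. By Corollary \ref{criterion of solvability for q-difference}, Notation \ref{q-notations} and Lemma \ref{q-criteria for belong to Amice}, such a solution has the shape $y(T)=c\,T^{a_0}\exp(\phi_q^-(T))$ with $c\in K$, $a_0\in\mathbb{Z}$ and $\{\lb_{-n}\}_{n\in\J}\in\mathrm{Conv}(\mathcal{E})$. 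Since $M^\vee\otimes N$ is again solvable at all $\rho\geq 1$, the same holds for its ``$a^-$-part'' $\sigma_q-(a_N^-/a_M^-)$, so by the $q$-difference transfer theorem of \cite{DV-Dwork} the Taylor solution of the latter at $\infty$ converges on $\{|T|\geq 1\}\cup\{\infty\}$, hence lies in $1+T^{-1}\a_K([1,\infty])^\times$. Comparing it with $\exp(\phi_q^-(T))$, which solves the same rank one equation over $\mathcal{E}_K$ and differs from it by a constant in $K$, shows that $\exp(\phi_q^-(T))\in\a_K([1,\infty])$, and hence that $y(T)\in\a_K([1,\infty])$ whenever it lies in $\mathcal{E}_K$, exactly as in the proof of Corollary \ref{canonical ext over amice}. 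Since the description of $\Hom_{\sigma_q}(M\otimes\mathcal{E}_K,N\otimes\mathcal{E}_K)$ is identical and $\a_K([1,\infty])\subset\mathcal{E}_K$, the restriction map $\Hom_{\sigma_q}(M,N)\to\Hom_{\sigma_q}(M\otimes\mathcal{E}_K,N\otimes\mathcal{E}_K)$ is bijective.

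\textbf{Expected main obstacle.} The delicate point is the last step of full faithfulness: establishing that a solution over $\mathcal{E}_K$ of the $\mathrm{Hom}$-operator already converges on the closed disk $\{|T|\geq 1\}\cup\{\infty\}$. This is where one genuinely needs solvability at \emph{all} $\rho\geq 1$ (not merely at $\rho=1$) and the $q$-difference transfer theorem, and where the bookkeeping with the monomial factor $T^{a_0}$ and with the unit $\exp(\phi_q^-(T))$ has to be carried out with care, just as in the differential case treated in Corollary \ref{canonical ext over amice}.
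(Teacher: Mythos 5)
Your proposal is correct and takes essentially the same route as the paper, whose own proof of this corollary is simply the one-line remark that the argument is analogous to that of Corollary \ref{canonical ext over amice}; you have carried out that analogy with the appropriate $q$-difference substitutes (Lemma \ref{lambda=1 and N=0}, Corollary \ref{q-analogue of g^+ is trivial}, Proposition \ref{q-division of the problem over Amice}, Lemma \ref{q-radius of constant}, and Corollary \ref{criterion of solvability for q-difference}). Your write-up is in fact more detailed than the paper's, and the point you flag as the main obstacle (convergence of the $\mathrm{Hom}$-solution on $\{|T|\geq 1\}\cup\{\infty\}$) is precisely the step the differential-case proof handles and which transfers here.
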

\begin{proof} 
The proof is analogous 
to the proof of Corollary 
\ref{canonical ext over
amice}. \end{proof}

\begin{remark}[Strong confluence]
\label{strongly confluence}

The $q$-deformation and $q$-confluence equivalences 
of \cite{Pu-q-Diff} do not hold  over the ring 
$\mathcal{E}_K$. Indeed those equivalences involve
the Taylor solutions, and their convergence locus. 
The Taylor solution of a differential equations over 
$\mathcal{E}_K$ does not converge anywhere.

However the computations we have obtained show that 
the solutions of differential equations and of 
$q-$difference equation over $\mathcal{E}_K$ coincide.
Moreover by the canonical extension theorem for 
differential and $q-$difference equations one knows that, 
if $|q-1|<\omega$, then
every rank one object comes by scalar extension from an 
object over the affinoid domain
$A:=\mathbb{P}^1-\mathrm{D}^-(0,1)=
\{|x|\geq 1\}$. 
In particular, for all $r>1$, every object comes by 
scalar extension from an object over the 
closed annulus $\{|x|\in[1,r]\}$. 
Hence we can apply the deformation and the confluence 
to the canonical extensions. 
\end{remark}

\bibliographystyle{amsalpha}
\bibliography{2012-NP-III}
\end{document}